\documentclass[a4paper]{amsart}
\usepackage[latin1]{inputenc}
\usepackage[T1]{fontenc}
\usepackage[english]{babel}
\usepackage{amsmath}
\usepackage{amscd}
\usepackage{graphicx}
\usepackage[all]{xy}
\usepackage{amsthm}
\usepackage{amssymb,url}
\usepackage[charter]{mathdesign}
\usepackage{bbm}
\usepackage{ae}
\usepackage[usenames,dvipsnames]{color}

\newtheorem{thm}{Theorem}[section]
\newtheorem{thm*}{Theorem}[]

\newtheorem{cor}[thm]{Corollary}
\newtheorem{prop}[thm]{Proposition}

\newtheorem{prop*}[thm*]{Proposition}
\newtheorem{lem}[thm]{Lemma}

\newtheorem{lem*}[thm*]{Lemma}

\theoremstyle{remark}
\newtheorem{remark}[thm]{Remark}
\newtheorem{remark*}[thm*]{Remark}

\theoremstyle{definition}
\newtheorem{deef}[thm]{Definition}
\newtheorem{deef*}[thm*]{Definition}

\newtheorem*{convention}{Convention}

\newcommand{\R}{\mathbbm{R}}
\newcommand{\C}{\mathbbm{C}}

\newcommand{\Z}{\mathbbm{Z}}
\newcommand{\Q}{\mathbbm{Q}}
\newcommand{\N}{\mathbbm{N}}

\newcommand{\id}{\, \mathrm{id}}
\newcommand{\rd}{\mathrm{d}}

\newcommand{\Ko}{\mathcal{K}}

\newcommand{\He}{\mathcal{H}}

\newcommand{\ind}{\mathrm{i} \mathrm{n} \mathrm{d} \,}

\newcommand{\Hom}{\mathrm{Hom}}

\renewcommand{\epsilon}{\varepsilon}
\renewcommand{\phi}{\varphi}
\renewcommand{\mod}{\;\mathrm{mod}\,}

\newcommand{\ch}{\mathrm{ch} \,}

\newcommand{\e}{\mathrm{e}}
\newcommand{\tra}{\mathrm{t}\mathrm{r}}

\title{Applying geometric $K$-cycles to fractional indices}
\author{Robin J. Deeley, Magnus Goffeng}

\address{Robin J. Deeley,\newline
\indent Mathematisches Institut, Georg-August Universit\"at G\"ottingen\newline
\indent Bunsenstrasse 3-5\newline
\indent D-37073 G\"ottingen\newline
\indent Germany\newline
\newline 
\indent Magnus Goffeng,\newline
\indent Institut f\"ur Analysis, Leibniz Universit\"at Hannover\newline
\indent 301 67 Hannover\newline
\indent Germany\newline}

\begin{document}

\begin{abstract}
A geometric model for twisted $K$-homology is introduced. It is modeled after the Mathai-Melrose-Singer fractional analytic index theorem in the same way as the Baum-Douglas model of $K$-homology was modeled after the Atiyah-Singer index theorem. A natural transformation from twisted geometric $K$-homology defined by Wang to the new geometric model is constructed. The analytic assembly mapping to analytic twisted $K$-homology in this model is an isomorphism for torsion twists on a finite $CW$-complex. For a general twist on a smooth manifold the analytic assembly mapping is a surjection. Beyond the aforementioned fractional invariants, we study $T$-duality for geometric cycles.
\end{abstract}
\maketitle

\vspace{-1mm}
\Large
\section*{Introduction}
\normalsize

One of the motivating problems for the Atiyah-Singer index theorem was the search for an explanation for the integrality of the $\hat{A}$-genus of a spin manifold. The answer is that the $\hat{A}$-genus is the Fredholm index of the Dirac operator associated with the spin structure. Mathai-Melrose-Singer asked a similar question in their paper \cite{mms}.  Namely, is there an index type interpretation of the $\hat{A}$-genus when there is no spin structure? The answer to this question required projective pseudo-differential operators and a pseudo-differential calculus with symbols given by sections of an Azumaya bundle.  These operators have similar micro local structure as usual pseudo-differential operators, but the space of smoothing operators in this calculus does not form an algebra. However, they admit a tracial functional. The trace defines an invariant of elliptic elements in the calculus known as the fractional analytic index. The $\hat{A}$-genus of an oriented manifold was proven to equal the fractional analytic index of a projective Dirac operator in \cite{mms}. 

The proof of the Atiyah-Singer index theorem from \cite{atiyahsingerone} uses $K$-theory as a container for symbols of elliptic operators and the index theorem can be stated as the equality of two different types of indices, one topological and the other analytic. The topological index is a push-forward to a point, so even though $K$-theory is a generalized cohomology theory, it is used as a homology theory. To remedy this limitation, Atiyah constructed the analytic $K$-homology of a topological space in \cite{atiyahglobal}.  This construction was later generalized much further by Kasparov \cite{kasparov} to give $KK$-theory. Even though an elliptic operator easily gives a class in analytic $K$-homology, it is often difficult to extract explicit index theoretic information from such data. 

In the same spirit that integer homology is constructed from simple combinatorial data such as simplicial complexes, Baum-Douglas \cite{baumdtva} introduced geometric $K$-homology which is built on data that have well understood index theory, spin$^c$-manifolds. From a geometric $K$-cycle one can assemble an abstract elliptic operator in analytic $K$-homology. This completed the picture as the Atiyah-Singer index theorem can be derived from a certain commutative diagram involving the assembly mapping; for more details, see \cite{baumdtva}. Geometric $K$-homology with variations has proved to be useful in various other index problems: the index problem for the Heisenberg calculus \cite{baumvanerp}, $\Z/k\Z$-manifolds \cite{deeleyzkone, deeleyzktwo} and $\R/\Z$-index theory \cite{deeleyrmodz, higsonroeEtaAssemRig, lott}.

The aim of this paper is to develop a construction similar to Baum-Douglas geometric $K$-homology for the fractional analytic index theorem of Mathai-Melrose-Singer.  The resulting theory is refered to as projective $K$-homology. A projective cycle $(M,E,\phi)$ over a space $X$ with a twist, i.e. a principle $PU(\He)$-bundle $P\to X$ for a separable Hilbert space $\He$, consists of three parts: a torsion twist $M$ on a spin$^c$-manifold, a twisted $K$-theory class $E$ and a mapping $\phi$. The spin$^c$-manifold with a twist $M$ is a principal $PU(k)$-bundle for some $k$ over the spin$^c$-manifold $PU(k)\backslash M$. The element $E$ is an element of $K$-theory with the twist associated with $M$ over $PU(k)\backslash M$. Finally, the mapping $\phi$ respects the twist of $X$ and $M$ in the sense that it (after possible Morita equivalences) is an equivariant mapping $M\to P$; the precise definition is given in Definition \ref{projectivegeometriccycles}. After defining a suitable equivalence relation we obtain a group $K_*^{proj}(P)$, see Definition \ref{theprojectivekhomology}. The assembly mapping from projective $K$-homology to twisted analytic $K$-homology can be constructed more or less in the usual way.  The manifold $PU(k)\backslash M$ carries a spin$^c$-Dirac operator that one twists by $E$. The resulting class is then pushed forward using $\phi$. When $X$ is a finite $CW$-complex and the twist is torsion, the assembly mapping is an isomorphism (see Theorem \ref{analyticassembly}). 

Examples of projective cycles naturally occur in various areas. When $T$-dualizing a circle bundle, one can construct projective cycles on the $T$-dual twisted space from geometric cycles on the total space of a circle bundle. We prove in Proposition \ref{tdualprop} that $T$-dualizing geometric cycles into projective cycles produces a monomorphism on classes in general and an isomorphism if the circle bundle is flat. The construction implies that the assembly mapping is a surjection on the $T$-dual of a circle bundle. Our main example is that of the clutching construction for elliptic projective pseudo-differential operators. The clutching construction can be performed for torsion twists and maps the twisted $K$-theory of the cotangent bundle, the container for $K$-theory data of symbols of elliptic projective pseudo-differential operators, to the projective $K$-homology.  

Our intention with this geometric model is two-fold.  Not only is it isomorphic to twisted analytic $K$-homology (if the twist is torsion), but the cycles used to define it are of a rather general form.  As such, these cycles appear naturally in a number of applications.  In \cite{wangcycles} (also see \cite{thomcareywang}), a geometric model for twisted $K$-homology is developed using cycles based on considerations from mathematical physics (in particular, D-branes).  As such, the cycles are certainly naturally defined and have also found a number of applications (e.g., the foliation index theorem in \cite{wangcycles}).  On the other hand, a number of constructions considered here seem to require the flexibility offered by projective cycles; a prototypical example is the clutching construction discussed in the previous paragraph.

There is a natural way to obtain a projective cycle from a twisted geometric cycle. This association produces a natural transformation from twisted geometric $K$-homology to projective $K$-homology that we call the geometric modification mapping (see Theorem \ref{geometricmodification}). We relate the geometric modification mapping to the assembly mapping in twisted geometric $K$-homology (see \cite{wangcycles}). In particular, for smooth manifolds, the analytic assembly mapping on projective $K$-homology is surjective and the geometric modification mapping is injective (see Corollary \ref{assemblycorollary}).

Projective $K$-homology for torsion twists fits very well with fractional invariants. Every projective cycle for a torsion twist has a fractional projective index which only depends on its class in projective $K$-homology. Similarly, a twisted geometric cycle has a fractional geometric index only depending on the class in twisted geometric $K$-homology; in the picture of a $D$-brane, this can be thought of as a total charge. We prove in Theorem \ref{geoprojfrac} that the fractional projective index, the fractional geometric index and the fractional analytic index are all equal. The construction of fractional indices from geometric $K$-cycles, both projective and twisted, unfortunately only makes sense for torsion twists. Precisely, in Subsection \ref{nontorsionproblems}, we show that the de Rham class of the twist's Dixmier-Douady class gives an obstruction to the bordism invariance of the fractional index.

Beyond the above mentioned assemble of twisted analytic cycles from projective cycles, there are other analytic realizations of our cycles in the case of a torsion twist. To give a $K$-homological description of the fractional analytic index from a more analytic picture, we use rational geometric $K$-homology as a stepping stone.  As the reader may recall (see \cite{blackadar}), the rational $K$-homology of $X$ can be realized via $KK(C(X),D)$ where $D$ is a UHF-algebra with $K_0$-group given by the rationals.  Moreover, this $KK$-theory group can be realized using Baum-Douglas type cycles (see for example \cite{wal}); we refer to such a cycle as a rational geometric $K$-cycle.  Since the twisted Chern character is a rational isomorphism for a torsion twist, we can in a direct fashion construct a rational geometric $K$-cycle from a projective cycle (see Definition \ref{geofromproj}). The fractional index of a projective cycle coincides with the index constructed from its rationalization as a rational $K$-homology class, i.e., as an index with values in the $K$-theory of the aforementioned UHF-algebra. 

Let us make a few remarks about torsion assumptions on the twist. The definition of projective $K$-homology and constructions of examples work under no torsion assumption on the twist. However, our main results concerning the assembly mapping and fractional invariants are proved only for torsion twists. It is at this point not clear if projective $K$-homology is the correct model for non-torsion twists, but we include its definition due to the interesting examples. Another geometric model, one that fits well also with non-torsion twists, can be found in \cite{BCW}. A number of constructions related to those considered in this paper are discussed in \cite{BCW}. While there is some overlap between our results and those found in \cite{BCW}, we have concentrated more on specific applications while they concentrate more on the general theory. As such, these papers are, in fact, rather complementary.

The paper is organized as follows; in the first section, we recall some known concepts of projective vector bundles and its relations to twisted $K$-theory. Some results in this section are formulated in a slightly new way to fit better with the rest of this paper. In Section \ref{sectionprojectivekhomology}, we introduce the projective cycles and projective $K$-homology and prove that the assembly mapping is an isomorphism for torsion twists. We consider examples coming from $T$-duality and the clutching construction in Section \ref{sectionexamples}. The projective $K$-homology is compared to twisted geometric $K$-homology in Section \ref{sectiongeometricmodification}. In the fifth and final section of the paper, fractional invariants are considered. 

The reader familiar with twisted $K$-theory may skip the first section, only to return to it for notation. Section \ref{sectionprojectivekhomology} contains the main results of the paper. Section \ref{sectionexamples} and \ref{sectiongeometricmodification} can be read independently of each other. The material in Section \ref{fractionalsection} is motivated by the clutching example from Section \ref{sectionexamples}, but is independent of the other parts of  Section \ref{sectionexamples} and \ref{sectiongeometricmodification}.

\Large
\section{Principal $PU(n)$-bundles and projective bundles}
\normalsize

\subsection{The group $PU(\infty)$}

The group of projective unitaries in $n$ dimensions is defined as the group $PU(n):=U(n)/U(1)$, it forms a simple Lie group. The group $PU(n)$ can also be realized as $SU(n)/Z(SU(n))$, the center $Z(SU(n))$ of $SU(n)$ is the finite subgroup consisting of multiples of the identity matrix with $n$th roots of unity. If $n,n'\in \N$ there is a Lie group homomorphism
 \[i_{n,n'}:U(n)\times U(n')\to U(nn'),\;(x,y)\mapsto x\otimes y.\]
Here we identify $U(nn')=U(\C^n\otimes \C^{n'})$. This homomorphism acts as multiplication on the centers. In particular, it induces a homomorphism $PU(n)\times PU(n')\to PU(nn')$. Another consequence is that if $n|n'$ we obtain a canonical mapping $PU(n)\to PU(n')$ in this way that factors over the mapping $PU(n)\to PU(n)\times PU(n'/n)$, $x\mapsto (x,1)$. We will denote $PU(\infty):=\varinjlim PU(n)$.

If $P$ is a locally compact Hausdorff space with a $PU(n)$-action we say that $P$ is a $PU(n)$-space. All actions are tacitly assumed to be left actions. Let us remark that since $PU(n)$ is a compact Lie group, the Slice Theorem implies that if $P$ is a manifold with a free smooth $PU(n)$-action then the quotient $PU(n)\backslash P$ is smooth and the mapping $P\to PU(n)\backslash P$ is a principal $PU(n)$-bundle. We say that a $U(l)$-principal bundle $P^U$ over a $PU(n)$-space $P$ is $PU(n)$-invariant if the lift of the $PU(n)$-action to $P^U$ commutes with the $U(l)$-action on $P^U$. If for instance $P\to PU(n)\backslash P$ is a principal bundle, there is a one to one correspondence between $PU(n)$-invariant $U(l)$-principal bundles on $P$ and $U(l)$-principal bundles on $PU(n)\backslash P$.

\begin{deef}
\label{puinftymorphism}
A $PU(\infty)$-morphism from a $PU(n)$-space $P_1$ to a $PU(n')$-space $P_2$ is a triple $(f,P_1^U,P_2^U)$ where the last two components consist of a $PU(n)$-invariant $U(l)$-bundle $P_1^U\to P_1$ and a $PU(n')$-invariant $U(l')$-bundle $P_2^U\to P_2$ such that $nl=n'l'$ and $f$ is a $PU(nl)$-equivariant continuous mapping 
\[f:PU(nl)\times_{PU(n)\times U(l)}P_1^U\to PU(nl)\times_{PU(n')\times U(l')}P_2^U.\]
If $f$ is a homeomorphism, we say that $f$ is a $PU(\infty)$-isomorphism. We say that $(f,P_1^U,P_2^U)$ is a $PU(\infty)$-diffeomorphism if all involved structures are manifolds and $f$ is a diffemorphism. \end{deef}
\label{projectivemarker}

The composition of two $PU(\infty)$-morphisms $(f,P_1^U,P_2^U)$ and $(g,P_2^U,P_3^U)$, which are $PU(\infty)$-morphisms $P_1\to P_2$ respectively $P_2\to P_3$, is defined as $(g\circ f, P_1^{U}, P_3^U)$. Observe that the composition of two arbitrary $PU(\infty)$-morphisms is not well defined in general. The motivation for introducing this notion of a morphism is its similarity with Morita morphisms. We will return to this later in the context of Azumaya bundles.

We will use the notation $P^{fr}\subseteq P$ for the maximal open subset on which the $PU(n)$-action is free.

\begin{prop}
\label{freecompactneighborhood}
If $P$ is a metric $PU(n)$-space and $K\subseteq P^{fr}$ is compact, $K$ has a $PU(n)$-invariant neighborhood $U$ on which $PU(n)$ acts freely.
\end{prop}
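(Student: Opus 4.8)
The plan is to exploit that $PU(n)$ is a compact Lie group and hence has \emph{no small subgroups}: there is an open neighbourhood $W\subseteq PU(n)$ of the identity such that $\{e\}$ is the only subgroup of $PU(n)$ contained in $W$. (For instance take $W=\exp(B)$ for a ball $B$ about $0$ in the Lie algebra of $PU(n)$ small enough that $\exp$ is injective on $2B$; if $h=\exp(\xi)\in W$ with $\xi\neq 0$, then $h^m=\exp(m\xi)\notin W$ for a suitable $m\ge 1$.) This is the only place where the Lie structure of $PU(n)$ is genuinely needed, and it is the step I would flag as the heart of the matter: over an arbitrary metric $G$-space the locus on which a continuous action is free need not be open, so some such input is unavoidable; everything else is point-set topology.

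First I would check that
\[
S:=\{\,p\in P \;:\; gp=p \text{ for some } g\in PU(n)\setminus W\,\}
\]
is closed in $P$. Indeed $\{(g,p)\in PU(n)\times P:\ gp=p\}$ is closed, being the preimage of the diagonal of $P\times P$ (closed, as $P$ is Hausdorff) under the continuous map $(g,p)\mapsto(gp,p)$; intersecting it with the closed set $(PU(n)\setminus W)\times P$ and applying the projection $(PU(n)\setminus W)\times P\to P$ --- a closed map, since $PU(n)\setminus W$ is compact (tube lemma) --- exhibits $S$ as a closed set. Hence $U_0:=P\setminus S$ is open, and $K\subseteq U_0$, because each $p\in K\subseteq P^{fr}$ has trivial stabiliser while $e\notin PU(n)\setminus W$, so $p\notin S$.

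The $PU(n)$-action on $U_0$ is free: for $p\in U_0$ the stabiliser subgroup $G_p$ satisfies $G_p\cap(PU(n)\setminus W)=\emptyset$, i.e.\ $G_p\subseteq W$, hence $G_p=\{e\}$ by the choice of $W$. Finally, to make the neighbourhood invariant I would pass to the saturation $U:=PU(n)\cdot U_0=\bigcup_{g\in PU(n)}gU_0$: it is open (a union of homeomorphic copies of $U_0$), it is $PU(n)$-invariant, it still contains $K$, and the action on it is free since stabilisers along an orbit are conjugate, $G_{gp}=gG_pg^{-1}=\{e\}$. This $U$ is the required neighbourhood. (Compactness of $K$ is not used anywhere; the argument shows more generally that $P^{fr}$ is just the open set of points with trivial stabiliser.)
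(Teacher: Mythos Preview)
Your proof is correct, and it takes a genuinely different route from the paper's. The paper argues by covering $K$ with finitely many $PU(n)$-invariant neighbourhoods $U_x$, each chosen to satisfy $\rd(U_x,(P^{fr})^c)>0$, and then takes the finite union; compactness of $K$ is used to pass to a finite subcover. By contrast, you bypass the covering argument entirely and prove directly, via the no-small-subgroups property of $PU(n)$, that $P^{fr}$ itself is open --- indeed your set $U_0$ equals $P^{fr}$, since $p\in U_0$ iff $G_p\subseteq W$ iff $G_p=\{e\}$. One may then simply take $U=P^{fr}$, which is already $PU(n)$-invariant.

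What your approach buys is twofold. First, it makes explicit the one nontrivial ingredient --- the Lie group structure of $PU(n)$ --- that the paper's proof uses only implicitly (the existence of $U_x$ with $\rd(U_x,(P^{fr})^c)>0$ presupposes that $P^{fr}$ is open near $x$, which is exactly the point at issue). Second, as you observe, it shows that compactness of $K$ is superfluous and yields the cleaner statement that $P^{fr}$ is open. The paper's approach, on the other hand, is closer in spirit to how the lemma is actually applied later (Lemma~\ref{retract}), where one wants an invariant neighbourhood of a specific compact set with controlled distance to the non-free locus.
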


\begin{proof}
Every point $x\in K$ has a $PU(n)$-equivariant neighborhood $U_x$ such that $\rd(U_x,(P^{fr})^c)>0$. Compactness of $K$ also implies that we may find finitely many $x_1,\ldots x_n$ so that $U:=\cup_{j=1}^N U_{x_j}$ is a $PU(n)$-equivariant neighborhood. This neighborhood clearly satisfies $\rd(U,(P^{fr})^c)>0$ and $U\subseteq P^{fr}$.
\end{proof}

\begin{remark}
\label{ddremark}
Principal $PU(n)$-bundles $P\to X$ are classified up to isomorphism by the Cech cohomology group $H^1(X,PU(n))$. The short exact sequence $1\to U(1)\to U(n)\to PU(n)\to 1$ and the isomorphism $H^2(X,U(1))\cong H^3(X,\Z)$ gives rise to a Bockstein mapping $\delta:H^1(X,PU(n))\to H^3(X,\Z)$. Since $Z(SU(n))\cong \Z/n\Z$ this mapping factors over $H^2(X,\Z/n\Z)$ and it follows that $n\delta(P)=0$ for any principal $PU(n)$-bundle $P$. The invariant $\delta(P)$ is known as the Dixmier-Douady invariant of $P$. A theorem of Serre (see \cite{grothen}) states that the torsion in $H^3(X,\Z)$ classifies principal $PU(n)$-bundles up to a $PU(\infty)$-isomorphism acting as the identity on the base $X$. Observe here that we are abusing the notation since a $PU(\infty)$-isomorphism need not preserve the rank $n$.
\end{remark}

\subsection{Projective bundles}

In this subsection, $P$ denotes a locally compact $PU(n)$-space. With the $PU(n)$-action on $P$ and the quotient $SU(n)\to PU(n)$ there is a naturally associated $SU(n)$-action on $P$. For notational simplicity we set $Z_n:=Z(SU(n))$ and identify $\Z/n\Z$ with the Pontryagin dual of $Z_n$. We note that if $E\to P$ is an $SU(n)$-equivariant vector bundle, any $g\in Z_n$ induces an $SU(n)$-equivariant vector bundle automorphism $g_E:E\to E$.

\begin{deef}
\label{projectivebundlemarker}
An $SU(n)$-equivariant vector bundle $E\to P$ is said to have central character $\chi\in \Z/n\Z$ if for any $g\in Z_n$ it holds that
$$g_E=\chi(g)\id_E.$$
A projective bundle over $P$ is an $SU(n)$-equivariant vector bundle $E\to P$ with central character $\chi=1\mod n\Z$.
\end{deef}

We note the following properties of $SU(n)$-equivariant vector bundles with central characters.

\begin{prop}
\label{elementpropproj}
Let $E$, $E'\to P$ be two $SU(n)$-equivariant vector bundles with central characters $\chi$ respectively $\chi'$. It holds that:
\begin{enumerate}
\item If $\chi=\chi'$, the direct sum $E\oplus E'$ is an $SU(n)$-equivariant vector bundle with central character $\chi$.
\item The tensor product $E\otimes_P E'$ is an $SU(n)$-equivariant vector bundle with central character $\chi+\chi'$.
\item The conjugated vector bundle $\overline{E}$ is an $SU(n)$-equivariant vector bundle with central character $-\chi$.
\item If $\chi =0\mod n\Z$, there exists a vector bundle $E_0\to PU(n)\backslash P$ unique up to isomorphism such that $E\cong \pi^*E_0$ as $SU(n)$-equivariant vector bundles (here $\pi:P\to PU(n)\backslash P$ denotes the quotient).
\end{enumerate}
\end{prop}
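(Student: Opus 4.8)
The plan is to deduce (1)--(3) by a direct computation with the fibrewise automorphisms $g_E$ induced by central elements $g\in Z_n$, and to obtain (4) by descending the equivariant structure to the quotient.

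For (1)--(3) I would equip $E\oplus E'$, $E\otimes_P E'$ and $\overline{E}$ with their evident $SU(n)$-equivariant structures and note that the automorphism induced by $g\in Z_n$ is, respectively, $g_{E\oplus E'}=g_E\oplus g_{E'}$, $g_{E\otimes E'}=g_E\otimes g_{E'}$, and $g_{\overline{E}}=\overline{g_E}$. Substituting $g_E=\chi(g)\id_E$ and $g_{E'}=\chi'(g)\id_{E'}$, one reads off $\chi(g)\id_E\oplus\chi'(g)\id_{E'}$, which is a scalar multiple of $\id_{E\oplus E'}$ precisely when $\chi=\chi'$, in which case it equals $\chi(g)\id_{E\oplus E'}$; this gives (1). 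Likewise $g_{E\otimes E'}=\chi(g)\chi'(g)\,\id_{E\otimes E'}$, and since under the identification $\widehat{Z_n}\cong\Z/n\Z$ the product of characters corresponds to addition in $\Z/n\Z$, the central character of $E\otimes_P E'$ is $\chi+\chi'$; this gives (2). Finally $\overline{g_E}=\overline{\chi(g)}\,\id_{\overline{E}}=\chi(g)^{-1}\id_{\overline{E}}$ because $\chi(g)$ is a root of unity, so $\overline{E}$ has central character $-\chi$; this gives (3). The only real care needed here is consistent bookkeeping between the multiplicative notation for characters of $Z_n$ and the additive notation in $\Z/n\Z$.

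For (4), the hypothesis $\chi=0\mod n\Z$ says exactly that $Z_n$ acts as the identity on every fibre of $E$; since $Z_n$ also acts trivially on $P$ (the $SU(n)$-action on $P$ factors through $PU(n)$), the $SU(n)$-action on the total space of $E$ descends to a $PU(n)$-action covering the $PU(n)$-action on $P$. Working where $\pi:P\to PU(n)\backslash P$ is a principal $PU(n)$-bundle, this $PU(n)$-action on $E$ is free, so I would set $E_0:=E/PU(n)$ and check it is a vector bundle over $PU(n)\backslash P$: over an open $U\subseteq PU(n)\backslash P$ trivialising $P$ as $U\times PU(n)$, the equivariant bundle $E|_{\pi^{-1}(U)}$ is isomorphic to the pullback of its restriction to $U\times\{1\}$, which simultaneously exhibits local triviality of $E_0$ and the isomorphism $\pi^*E_0\cong E$ of $SU(n)$-equivariant bundles. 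This is the vector-bundle analogue of the correspondence between $PU(n)$-invariant principal bundles on $P$ and principal bundles on $PU(n)\backslash P$ recorded before Definition \ref{puinftymorphism}. Uniqueness of $E_0$ up to isomorphism follows because $\pi^*$ is injective on isomorphism classes: from an equivariant isomorphism $\pi^*E_0\cong\pi^*E_0'$, quotienting by $PU(n)$ recovers $E_0\cong E_0'$.

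The main obstacle is part (4), and precisely the descent step. It uses that $P\to PU(n)\backslash P$ be a principal bundle --- equivalently that the $PU(n)$-action on $P$ be free --- so that local sections exist and the fibres of $E$ can be identified with those of $E_0$. For a genuinely non-free action the statement as phrased is not literally true, already for $P$ a point and $E$ the adjoint representation of $SU(n)$, which has central character $0$ yet is not pulled back from a point; so in carrying out (4) one works under the standing assumption that $P$ is a free $PU(n)$-space, or with freeness relaxed to the weaker condition that each stabiliser act trivially on the corresponding fibre of $E$, which holds in the situations of interest here.
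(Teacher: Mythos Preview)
The paper does not supply a proof of this proposition; it is stated and then used. Your arguments for (1)--(3) are the natural fibrewise computations and are correct. Your argument for (4) via descent along the principal $PU(n)$-bundle $\pi$ is also the standard one, and your observation is well taken: the statement of (4) tacitly requires the $PU(n)$-action to be free (or at least that all isotropy groups act trivially on the fibres), since otherwise the adjoint-representation counterexample you give shows the conclusion fails. In the paper this causes no trouble, as every application of (4) --- in Lemma~\ref{zetaeq}, Proposition~\ref{trivialbundle}, Proposition~\ref{rootprop}, and implicitly throughout Section~\ref{sectionprojectivekhomology} --- occurs with $P$ (or the relevant space) a genuine principal bundle, so your proof covers exactly the cases needed.
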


One can stabilize projective bundles, i.e. when the central character is $1\mod n\Z$, along principal $U(l)$-bundles. This construction is to be compared with Morita equivalence of Azumaya bundles.  Let $Q\to P$ be a $PU(n)$-invariant principal $U(l)$-bundle and $E\to P$ a projective bundle. We set 
$$S(U(n), U(l)):=\{(g,h)\in U(n)\times U(l)| i_{n,l}(g,h)\in SU(nl)\}.$$ 
We let $\pi_{\tilde{Q}}:\tilde{Q}:=Q/U(1)\to P$ denote the principal $PU(l)$-bundle associated with $Q\to P$. The quotient mappings induce a surjection $S(U(n),U(l))\times Z_{nl}\to PU(n)\times PU(l)$ and we equip $\tilde{Q}$ with the associated $S(U(n),U(l))\times Z_{nl}$-action. Since $E$ is a projective bundle, the $SU(n)$-action extends to a $U(n)$-action. As such, we can view the vector bundle $\pi_{\tilde{Q}}^*E\to \tilde{Q}$ as an $S(U(n),U(l))\times Z_{nl}$-equivariant vector bundle, with $Z_{nl}$ acting along the character $1\mod nl\Z$. We define the $PU(nl)$-space
$$P^Q:=PU(nl)\times_{PU(n)\times U(l)}Q=SU(nl)\times_{S(U(n), U(l))\times Z_{nl}}\tilde{Q}$$ 
and the $SU(nl)$-equivariant vector bundle
$$E^Q:=SU(nl)\times_{S(U(n), U(l))\times Z_{nl}}\pi_{\tilde{Q}}^*E\to P^Q.$$

\begin{prop}
The vector bundle $E^Q\to P^Q$ is a well defined projective bundle on the $PU(nl)$-space $P^Q$.
\end{prop}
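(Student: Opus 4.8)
The statement packages three things: that $E^Q$ is an honest vector bundle over $P^Q$, that it is $SU(nl)$-equivariant, and that its central character equals $1\mod nl\Z$. Together with the fact, already recorded, that $P^Q$ is a $PU(nl)$-space, these are precisely the hypotheses of Definition \ref{projectivebundlemarker}. The plan is to rewrite the balanced product over $H:=S(U(n),U(l))\times Z_{nl}$ as a balanced product over a closed subgroup of $SU(nl)$, where the associated-bundle construction is completely standard, and then to read off the central character by hand.

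First I would pin down the group theory. Let $\rho\colon H\to SU(nl)$ be the homomorphism $(g,h,\zeta)\mapsto \zeta\,i_{n,l}(g,h)$. Its image is the closed subgroup $\tilde\Gamma:=i_{n,l}(S(U(n),U(l)))\subseteq SU(nl)$, and a short calculation gives $\ker\rho=\{(\lambda I_n,\mu I_l,\zeta):\lambda,\mu\in U(1),\ \zeta\in Z_{nl},\ \lambda\mu\zeta=1\}$. Two observations make the construction go: $Z_{nl}\subseteq\tilde\Gamma$ (since $\zeta I_{nl}=i_{n,l}(\zeta I_n,I_l)$ with $(\zeta I_n,I_l)\in S(U(n),U(l))$), and $\ker\rho$ is contained in the kernel of the surjection $H\to PU(n)\times PU(l)$ defining the $H$-action on $\tilde Q$. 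Hence $\rho$ induces an isomorphism $H/\ker\rho\cong\tilde\Gamma$, the $H$-action on $\tilde Q$ descends to a $\tilde\Gamma$-action, and $P^Q=SU(nl)\times_{\tilde\Gamma}\tilde Q$ is the ordinary associated bundle over $SU(nl)/\tilde\Gamma$ of the free and proper $\tilde\Gamma$-space $SU(nl)$ (a manifold, via the Slice Theorem, when the data are smooth).

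The essential point is then to verify that the $H$-equivariant structure on $\pi_{\tilde Q}^*E$ also descends along $H\to\tilde\Gamma$, i.e.\ that $\ker\rho$ acts trivially on $\pi_{\tilde Q}^*E$. Writing out the equivariant structure -- $g\in U(n)$ acting through the $U(n)$-extension of the projective-bundle structure on $E$, $h\in U(l)$ acting through the presentation $\tilde Q=Q/U(1)$, and $Z_{nl}$ acting by the character $1\mod nl\Z$ -- an element $(\lambda I_n,\mu I_l,\zeta)\in\ker\rho$ fixes the relevant point of $\tilde Q$ and acts on the corresponding fibre of $\pi_{\tilde Q}^*E$ by scalar multiplication, the three factors contributing the scalars $\lambda$, $\mu$ and $\zeta$; since $\lambda\mu\zeta=1$ this is the identity. \emph{This cancellation is exactly what $S(U(n),U(l))$ and the extra $Z_{nl}$ were built into the construction to produce, and it is the step that requires care with the conventions for the equivariant structure on $\pi_{\tilde Q}^*E$.} Granting it, $\pi_{\tilde Q}^*E$ is a $\tilde\Gamma$-equivariant vector bundle over the $\tilde\Gamma$-space $\tilde Q$, so $E^Q=SU(nl)\times_{\tilde\Gamma}\pi_{\tilde Q}^*E$ is the associated vector bundle over $P^Q=SU(nl)\times_{\tilde\Gamma}\tilde Q$: a well-defined $SU(nl)$-equivariant vector bundle (again a manifold when the data are smooth). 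Finally, for $\zeta I_{nl}\in Z_{nl}$ and $[s,v]\in E^Q$, using that $\zeta I_{nl}$ is central in $SU(nl)$ and lies in $\tilde\Gamma$, one gets $\zeta_{E^Q}[s,v]=[\zeta s,v]=[s,\zeta v]=\zeta[s,v]$, the middle equality being the balanced-product relation together with the fact that $\zeta\in\tilde\Gamma$ is represented by $(I_n,I_l,\zeta)\in H$, which acts on $\pi_{\tilde Q}^*E$ by the scalar $\zeta$; thus $E^Q$ has central character $1\mod nl\Z$ and is a projective bundle on $P^Q$. The main obstacle is the middle step -- the triviality of the $\ker\rho$-action -- and everything else is a formal consequence of the associated-bundle formalism once it is in place.
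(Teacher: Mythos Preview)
The paper states this proposition without proof, so there is nothing to compare against directly; your write-up supplies the verification the paper omits. Your strategy is the natural one: factor the balanced product through the image $\tilde\Gamma=\rho(H)\subseteq SU(nl)$, reduce well-definedness of $E^Q$ to showing that $\ker\rho$ acts trivially on $\pi_{\tilde Q}^*E$, and then read off the central character from the balanced-product relation. The computation of $\ker\rho$, the inclusion $Z_{nl}\subseteq\tilde\Gamma$, and the final central-character check are all correct.

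The one place that genuinely needs more than a flag is the claim that $h=\mu I_l$ contributes the scalar $\mu$ on $\pi_{\tilde Q}^*E$. If one reads the paper's sentence ``we can view $\pi_{\tilde Q}^*E$ as an $S(U(n),U(l))\times Z_{nl}$-equivariant vector bundle'' in the most naive way---$U(l)$ acting only via $PU(l)$ on the base $\tilde Q$---then $\mu I_l$ acts trivially on the fibres, the product $\lambda\zeta$ need not be $1$, and the quotient is not a vector bundle. The correct $H$-structure has the centre of $U(l)$ acting fibrewise by the standard character; one way to see this is forced is to compare with the inverse functor of Lemma~\ref{zetaeq}, where $\zeta_Q^{-1}(F)=F|_{\tilde Q}\otimes L_Q^{-1}$, so that $E^Q|_{\tilde Q}$ must carry the $SU(l)$-central character $1\bmod l\Z$. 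Concretely, one can implement this by identifying $\pi_{\tilde Q}^*E$ with $(Q\times_P E)/U(1)$ using the diagonal $U(1)$-action $(q,e)\mapsto(\lambda q,\lambda^{-1}e)$ (equivalently, tensoring with $L_Q$), after which the $U(l)$-action on $Q$ visibly has $\mu I_l$ acting by $\mu$ on fibres. Once you make this convention explicit rather than merely alluding to ``the presentation $\tilde Q=Q/U(1)$'', the cancellation $\lambda\mu\zeta=1$ goes through and the rest of your argument is complete.
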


A morphism of two projective bundles over a $PU(n)$-space is an $SU(n)$-equivariant morphism of vector bundles. We let $Proj(P)$ denote the category of isomorphism classes of projective bundles on $P$. It follows from Proposition \ref{elementpropproj} that $Proj(P)$ is an additive category.

\begin{lem}
\label{zetaeq}
Whenever $Q\to P$ is a $PU(n)$-invariant principal $U(l)$-bundle, stabilization of projective bundles defines an additive equivalence
\[\zeta_Q: Proj(P)\xrightarrow{\sim} Proj(P^Q), \quad \zeta_Q(E):=E^Q.\]
\end{lem}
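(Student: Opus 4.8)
The plan is to show that $\zeta_Q$ is a well-defined additive functor with an inverse functor given by stabilizing along the conjugate bundle $\bar Q \to P$, pulled back appropriately. The proposition immediately before this lemma already guarantees that $E^Q \to P^Q$ is a projective bundle, so $\zeta_Q$ is at least well-defined on objects; the first routine step is to check that it is functorial, i.e. that an $SU(n)$-equivariant morphism $E \to E'$ induces an $SU(nl)$-equivariant morphism $E^Q \to (E')^Q$ compatible with composition and direct sums. This is formal from the associated-bundle construction $SU(nl) \times_{S(U(n),U(l))\times Z_{nl}} (-)$, which is itself an additive functor; so additivity of $\zeta_Q$ follows from part (1) of Proposition \ref{elementpropproj} together with the fact that pullback along $\pi_{\tilde Q}$ and induction along group inclusions both commute with $\oplus$.

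**The heart of the matter** is constructing the inverse. The idea is that $P^Q \to P$ is (up to the $PU(nl)/PU(n)$-action) built from the $PU(l)$-bundle $\tilde Q$, and the ``opposite'' bundle is $\bar Q$, the conjugate $U(l)$-bundle. Concretely, the $PU(n)$-invariant $U(l)$-bundle $\bar Q \to P$ pulls back to a $PU(l)$-invariant $U(l^2)$-bundle structure, and one checks that $(P^Q)^{\bar Q}$ is $PU(\infty)$-isomorphic to $P^{Q\otimes \bar Q}$, where $Q \otimes \bar Q \to P$ carries a canonical trivialization of its $PU(l^2)$-reduction — precisely because $\tilde Q \times_P \overline{\tilde Q}$ admits the diagonal section, so $Q\otimes \bar Q$ reduces to the trivial $PU(1)$-bundle and the induced $PU(nl^2)$-space is canonically $PU(nl^2)\times_{PU(n)} P$. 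Under part (4) of Proposition \ref{elementpropproj} applied to the central-character-$0$ situation (after twisting down by a line), this identifies $\zeta_{\bar Q}\circ \zeta_Q$ with the functor $E \mapsto E \otimes L$ for a fixed $SU(\cdot)$-equivariant line bundle $L$ coming from the standard representation of $U(l)$ on $\det \C^l$; and this twisting functor is visibly an additive equivalence with inverse $\otimes \bar L$. Symmetrically $\zeta_Q \circ \zeta_{\bar Q}$ is such a twist, so $\zeta_Q$ has both a left and a right inverse up to natural isomorphism, hence is an equivalence.

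**The step I expect to be the main obstacle** is the bookkeeping in identifying $(P^Q)^{\bar Q}$ with $P^{Q\otimes\bar Q}$ and tracking exactly which character of which center the vector bundle transforms under at each stage — the iterated quotients by $S(U(n),U(l))\times Z_{nl}$ and then by $S(U(nl),U(l))\times Z_{nl^2}$ produce a genuinely fiddly group-theoretic diagram, and one must verify that the residual line bundle $L$ is the correct power of a determinant so that central characters match up to give $1 \bmod nl^2 \Z$ on the nose. A cleaner route, which I would try first to avoid this, is to verify the equivalence \emph{locally}: over a trivializing open cover $\{U_\alpha\}$ for $Q$ (and hence for $P \to PU(n)\backslash P$ if that is a bundle, or more generally working with local sections of $P^{fr}$ via Proposition \ref{freecompactneighborhood}), the functor $\zeta_Q$ is, after choosing local trivializations, simply $E_\alpha \mapsto E_\alpha \otimes \C^l$ on fibers with a twisted transition cocycle, which is manifestly invertible; one then checks that these local inverses glue because they are canonical. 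Either way, the only nontrivial inputs beyond elementary representation theory of $U(n)$ are parts (1), (2) and (4) of Proposition \ref{elementpropproj}, all of which are available.
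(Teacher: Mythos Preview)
Your approach is workable in outline but takes a considerably longer route than the paper, and one of your auxiliary claims is wrong. The paper does not build an inverse by stabilizing along $\bar Q$ and then untwisting a determinant line at the level of $PU(nl^2)$-spaces; instead it works directly on $P^Q$. Observe that $\tilde Q$ sits inside $P^Q$ as a $PU(n)\times PU(l)$-invariant subspace. Given a projective bundle $F\to P^Q$, the restriction $F|_{\tilde Q}$ is an $SU(n)\times SU(l)$-equivariant bundle with central character $(1\bmod n\Z,\,1\bmod l\Z)$. The line bundle $L_Q:=Q\times_{U(1)}\C\to \tilde Q$ has central character $(0\bmod n\Z,\,1\bmod l\Z)$, so $F|_{\tilde Q}\otimes L_Q^{-1}$ has central character $(1\bmod n\Z,\,0\bmod l\Z)$ and therefore descends along the free $PU(l)$-action to a projective bundle $\zeta_Q^{-1}(F)\to P$. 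This uses only parts (2) and (4) of Proposition \ref{elementpropproj} and avoids entirely the iterated-quotient bookkeeping you anticipated as the main obstacle. What your approach buys is a more ``symmetric'' picture (inverse $=$ stabilize along the conjugate), but at the cost of tracking characters through two stabilizations and identifying $(P^Q)^{\bar Q'}$ with $P$ up to $PU(\infty)$-isomorphism, which is strictly more work.

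Separately, your proposed local description is incorrect: $\zeta_Q$ does \emph{not} change the rank of $E$. The construction $E^Q=SU(nl)\times_{S(U(n),U(l))\times Z_{nl}}\pi_{\tilde Q}^*E$ is an induced bundle whose fibre is still that of $E$, not $E\otimes\C^l$; what changes is the structure group and the space over which the bundle lives. So the ``locally $E_\alpha\mapsto E_\alpha\otimes\C^l$'' picture is not the right heuristic, and if you pursued the local route you would need to redo that computation.
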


\begin{proof}
Let $F\to P^Q$ be a projective bundle. There is a $PU(n)\times PU(l)$-equivariant embedding $\tilde{Q}\hookrightarrow P^Q$ and $F|_{\tilde{Q}}$ can be viewed as an $SU(n)\times SU(l)$-equivariant vector bundle with central character 
$$(1\mod n\Z,1\mod l\Z)\in \Z/n\Z\times \Z/l\Z\cong \widehat{Z_n\times Z_l}.$$ 
Define the line bundle $L_Q:=Q\times_{U(1)}\C\to \tilde{Q}$, it forms an $SU(n)\times SU(l)$-equivariant line bundle whose central character is given by $(0\mod n\Z,1\mod l\Z)$. The vector bundle $F|_{\tilde{Q}}\otimes L_Q^{-1}$ forms an $SU(n)\times SU(l)$-equivariant line bundle with central character $(1\mod n\Z,0\mod l\Z)$ (cf. Proposition \ref{elementpropproj}). Since the $PU(l)$-action is free, there is a vector bundle $\zeta^{-1}_Q(F)\to P$ such that $\pi_{\tilde{Q}}^*\zeta^{-1}_Q(F)=F|_{\tilde{Q}}\otimes L_Q^{-1}$. A direct verification of how central elements acts leads us to the conclusion that $\zeta_Q^{-1}:Proj(P^Q)\to Proj(P)$ is the inverse functor to $\zeta_Q$.
\end{proof}

If $f:P\to P'$ is a $PU(n)$-equivariant mapping of $PU(n)$-spaces and $E\to P'$ is a projective bundle then $f^*E$ is clearly a projective bundle on $P$. Projective bundles may also be pulled back along $PU(\infty)$-morphisms: if $f_\infty=(f,Q,Q')$ is a morphism from $P$ to $P'$, we can define its $PU(\infty)$-pullback 
$$f^*_\infty E:=\zeta_{Q'}^{-1}f^*(E^Q)$$ 
which by Lemma \ref{zetaeq} is a well defined isomorphism class of a projective bundle. We let $Vec(X)$ denote the additive category of isomorphism classes of vector bundles on a locally compact Hausdorff space $X$.

\begin{prop}
\label{trivialbundle}
Let $f:P\to PU(n)\times X$ be a $PU(n)$-equivariant homemorphism and $s_f:X\to P$ the section $s_f(x):=f^{-1}(x,1)$, then $s_f^*:Proj(P)\to Vec(X)$ is the inverse functor to $f^*_\infty:Vec(X)\to Proj(P)$ where $f_\infty:=(f,f^*(U(n)\times X),U(n)\times X)$. 
\end{prop}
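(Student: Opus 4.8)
The plan is to unwind the definitions of the two functors and check that both composites are naturally isomorphic to the identity. First I would recall that, by definition, $f_\infty = (f, f^*(U(n)\times X), U(n)\times X)$ is a $PU(\infty)$-morphism from $P$ to the trivial $PU(n)$-space $PU(n)\times X$ (with $l = l' = 1$, so the rank condition $nl = n'l'$ is automatic and no genuine stabilization is happening; $Q' = U(n)\times X$ is the trivial $U(1)$-bundle and $Q = f^*(U(n)\times X)$ its pullback). Then $f^*_\infty E = \zeta_{Q'}^{-1} f^*(E^Q)$, and since $Q'$ is the trivial principal $U(1)$-bundle on $PU(n)\times X$, the stabilization functor $\zeta_{Q'}$ is (naturally isomorphic to) the identity on $Proj(PU(n)\times X)$; similarly $E^Q \cong (\text{pullback of }E)$ along the canonical identification $P^Q \cong P$ coming from $Q$ being induced from the trivial bundle. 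So on the level of projective bundles $f^*_\infty$ amounts to ordinary pullback along $f$ followed by the identification with $Proj(PU(n)\times X)$, and $s_f^*$ is honest pullback along the section $s_f$.

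The core computation is then the following. A projective bundle on the trivial $PU(n)$-space $PU(n)\times X$ is, by Definition \ref{projectivebundlemarker}, an $SU(n)$-equivariant vector bundle with central character $1 \bmod n\Z$. I would show that restriction along the inclusion $X \hookrightarrow PU(n)\times X$, $x \mapsto (1,x)$, i.e. the functor $s^*$ for the standard section $s(x) = (1,x)$, is an additive equivalence $Proj(PU(n)\times X) \xrightarrow{\sim} Vec(X)$ with inverse $V \mapsto q^*V$, where $q: SU(n)\times X \to X$ is the projection, made into an $SU(n)$-equivariant bundle with $SU(n)$ acting only on the first factor and $Z_n$ acting by the character $1 \bmod n\Z$ (this is exactly the standard tautological projective bundle on $PU(n)$ tensored with $V$). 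That $s^*(q^*V) = V$ is immediate. For the other composite, given an $SU(n)$-equivariant bundle $E$ on $PU(n)\times X$ with central character $1$, one checks that the $SU(n)$-equivariant structure together with triviality of the $PU(n)$-factor produces a canonical isomorphism $E \cong q^*(s^*E)$: the action gives trivializations over each $PU(n)$-orbit, and the central character condition pins these down compatibly. Once this is established, composing with the identification of $f^*_\infty$ with "pull back along $f$, then identify with $Proj(PU(n)\times X)$" and using that $s_f = f^{-1}\circ s$ gives $s_f^* \circ f^*_\infty \cong \mathrm{id}_{Vec(X)}$ and $f^*_\infty \circ s_f^* \cong \mathrm{id}_{Proj(P)}$, since $f$ being a $PU(n)$-equivariant homeomorphism makes $f^*$ an equivalence with inverse $(f^{-1})^*$.

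The main obstacle I anticipate is purely bookkeeping rather than conceptual: one must be careful that all the identifications ($\zeta_{Q'}$ trivial, $E^Q$ a plain pullback, $P^Q \cong P$) are the ones compatible with the $SU(n)$- and $Z_n$-equivariant structures and with the section $s_f$, so that the natural isomorphisms genuinely compose to the identity and not merely to an autoequivalence. In particular the verification "$E \cong q^*(s^*E)$ as $SU(n)$-equivariant bundles" — tracking how $Z_n \subseteq SU(n)$ acts on both sides and confirming the central character matches — is the one genuine equivariance check, analogous to the "direct verification of how central elements act" invoked at the end of the proof of Lemma \ref{zetaeq}. Everything else is a routine consequence of $f$ being a $PU(n)$-equivariant homeomorphism and of functoriality of pullback.
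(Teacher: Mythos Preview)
Your proposal is correct and follows essentially the same route as the paper. The paper compresses the argument by directly introducing the projective line bundle $L_f := f^*(U(n)\times X)\times_{U(1)}\C \to P$ and observing that $f_\infty^*E \cong L_f \otimes \pi^*E$; your ``standard tautological projective bundle on $PU(n)$'' is exactly $L_n$, whose pullback along $f$ is $L_f$, so once your bookkeeping is unwound the two arguments coincide.
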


\begin{proof}
The existence of $f$ guarantees that $P$ forms a principal $PU(n)$-bundle on $X$ that lifts to the $U(n)$-bundle $f^*(U(n)\times X)\to X$. The line bundle 
$$L_f:=f^*(U(n)\times X)\times_{U(1)}\C\to P$$
is a projective line bundle. It is clear from the construction of pull backs that $f_\infty^*E\cong L_f\otimes s_f^*E$ and the Proposition follows.
\end{proof}

\begin{convention} Henceforth, unless stated otherwise, all $PU(\infty)$-morphisms are assumed to be equivariant as all involved operations constructed from a $PU(\infty)$-morphism depend naturally on its components.
\end{convention}

\subsection{Projective $K$-theory}

Before discussing $K$-homology, we recall some notions in $K$-theory. As in Atiyah's original definition of topological $K$-theory, we describe $K$-theory using elliptic complexes. See more in \cite{atiyahbook}.

\begin{deef} 
A projective elliptic complex over a locally compact $PU(n)$-space $P$ is a triple $(E_1,E_2,\sigma)$ where $E_1$ and $E_2$ are projective bundles such that $\sigma:E_1\to E_2$ is a morphism of projective bundles which is an isomorphism outside a compact subset of $P$. 
\end{deef}

A morphism of projective elliptic complexes $(E_1,E_2,\sigma)$ and $(E_1',E_2',\sigma')$ is a pair of morphisms $E_1\to E_1'$ and $E_2\to E_2'$ that intertwines $\sigma$ with $\sigma'$. Composition of morphisms is given by the obvious operation and an isomorphism of projective elliptic complexes is an invertible morphism. A projective elliptic complex of the form $(E,E,\id_E)$ is called an elementary projective elliptic complex. The sum of two projective elliptic complexes $(E_1,E_2,\sigma)$ and $(E_1',E_2',\sigma')$ is defined by
\[(E_1,E_2,\sigma)+(E_1',E_2',\sigma'):=(E_1\oplus E_1',E_2\oplus E_2',\sigma\oplus \sigma').\]
We say that two elliptic complexes $(E_1,E_2,\sigma)$ and $(E_1',E_2',\sigma')$ are degenerately equivalent if there are degenerate cycles $(E,E,\id_E)$ and $(E',E',\id_{E'})$ such that 
$$(E_1,E_2,\sigma)+(E,E,\id_E)\cong (E_1',E_2',\sigma')+(E',E',\id_{E'}).$$

If $f:P\to P'$ is a proper $PU(n)$-equivariant mapping and $(E_1,E_2,\sigma)$ is a projective elliptic complex over $P'$, we define the pulled back projective elliptic complex along $f$ by
\[f^*(E_1,E_2,\sigma):=(f^*E_1,f^*E_2,f^*\sigma).\]
Two projective elliptic complexes $(E_1,E_2,\sigma)$ and $(E_1',E_2',\sigma')$ over $P$ are said to be homotopic if there is a projective complex $(\tilde{E}_1,\tilde{E}_2,\tilde{\sigma})$ over $P\times [0,1]$ such that 
$$i_0^*(\tilde{E}_1,\tilde{E}_2,\tilde{\sigma})\cong(E_1,E_2,\sigma) \quad\mbox{and}\quad i_1^*(\tilde{E}_1,\tilde{E}_2,\tilde{\sigma})\cong(E_1',E_2',\sigma'),$$
where $i_t:P\to P\times [0,1]$ denotes the mapping $i_t(p):=(p,t)$ for $t\in [0,1]$.

\begin{deef}
If $P$ is a principal $PU(n)$-bundle, the projective $K$-theory group of $P$, denoted by $K^{0}_{proj}(P)$, is defined as the group of equivalence classes  of projective elliptic complexes over $P$ under the relation generated by homotopy equivalence and degenerate equivalence. The higher projective $K$-groups are defined by 
$$K^i_{proj}(P):=K^0_{proj}(P\times\R^i).$$
\end{deef}

Indeed, a standard homotopy argument guarantees that for any $j$, $K^{j}_{proj}(P)$ forms an abelian group under addition of projective elliptic complexes. Observe that if $P$ is compact, $K^0_{proj}(P)$ is the Grothendieck group of isomorphism classes of projective bundles. The proof of the next Proposition is the same as in the ordinary setting.

\begin{prop}
Projective $K$-theory is a contravariant functor on the category of principal $PU(n)$-bundles that satisfies Bott periodicity
$K^i_{proj}(P)\cong K^{i+2}_{proj}(P)$. 
\end{prop}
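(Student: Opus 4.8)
The proposition consists of two essentially independent assertions, which I would establish in turn.

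\emph{Functoriality.} The plan for this part is to check that pullback of projective elliptic complexes descends to $K$-theory. A proper $PU(n)$-equivariant map $f\colon P\to P'$ of principal $PU(n)$-bundles sends a projective bundle $E\to P'$ to the $SU(n)$-equivariant bundle $f^*E$ of the same central character, hence again a projective bundle; the non-invertibility locus of $f^*\sigma$ lies inside $f^{-1}$ of that of $\sigma$, which is compact since $f$ is proper, so $f^*(E_1,E_2,\sigma)$ is again a projective elliptic complex. The assignment is additive, preserves elementary complexes, and commutes with the homotopies of the definition (pull back along $f\times\id_{[0,1]}$), so it induces $f^*\colon K^0_{proj}(P')\to K^0_{proj}(P)$; the identities $(g\circ f)^*=f^*\circ g^*$ and $\id^*=\id$ are inherited from bundle pullback, and replacing $f$ by the (still proper) map $f\times\id_{\R^i}$ handles the higher groups. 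This part is routine.

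\emph{Bott periodicity.} Since $K^i_{proj}(P)=K^0_{proj}(P\times\R^i)$ by definition, it suffices to produce a natural isomorphism $K^0_{proj}(Q)\cong K^0_{proj}(Q\times\R^2)$ for an arbitrary principal $PU(n)$-bundle $Q$, and then specialize to $Q=P\times\R^i$. Write $X:=PU(n)\backslash Q$, so that $PU(n)\backslash(Q\times\R^2)=X\times\R^2$. The structural input I would use is that $Proj$ is a module category over ordinary vector bundles on the base: if $E\to Q$ is a projective bundle and $V\to X$ a vector bundle, then $E\otimes\pi^*V$ is again projective, because $\pi^*V$ has central character $0$ and central characters add (Proposition \ref{elementpropproj} (2) and (4)). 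In particular, external tensoring with the Bott generator $\beta\in K^0_{proj}(PU(n)\times\R^2)$, the latter identified with $K^0(\R^2)\cong\Z$ via Proposition \ref{trivialbundle}, gives a natural homomorphism $\beta\boxtimes-\colon K^0_{proj}(Q)\to K^0_{proj}(Q\times\R^2)$, and the task is to prove it bijective.

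I would do this by running the Atiyah--Bott proof of the clutching/periodicity theorem essentially verbatim: compactify the $\R^2$-factor to get the principal $PU(n)$-bundle $Q\times S^2\to X\times S^2$, so that (after the usual normalization) a projective elliptic complex over $Q\times\R^2$ becomes clutching data for a projective bundle over $Q\times S^2$ along $Q\times S^1$; then approximate the clutching automorphism by Laurent polynomials in the equatorial variable, reduce to linear clutching functions, and split off the relevant summands using the spectral projections of the leading coefficient. Each step takes place inside the additive category $Proj(-)$, respects the module structure over $Vec(X)$, and uses only partitions of unity on the paracompact base $X$ together with fibrewise linear algebra and holomorphic functional calculus --- operations that preserve $SU(n)$-equivariance and the central character. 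This produces the inverse of $\beta\boxtimes-$, with naturality in $Q$ following step by step. A slicker alternative: since $n\delta(Q)=0$, the Dixmier--Douady class $\delta(Q)$ is torsion, $Proj(Q)$ is equivalent to the category of module bundles over the Azumaya bundle $Q\times_{PU(n)}\mathrm{M}_n(\C)\to X$, hence $K^0_{proj}(Q)\cong K^0(X;\delta(Q))$ is twisted $K$-theory of the base, for which Bott periodicity is classical.

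The one point that genuinely needs care --- and what I expect to be the main obstacle --- is the equivariant bookkeeping in this reduction: checking that the compactification $Q\times S^2$ stays a principal $PU(n)$-bundle, that every intermediate clutching object is still a projective bundle with unchanged central character, and that the finite-rank approximations and spectral projections can be chosen $PU(n)$-equivariantly and globally over all of $X$. None of this is hard, but it is precisely the content that ``the proof is the same as in the ordinary setting'' leaves implicit.
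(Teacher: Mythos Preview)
Your proposal is correct and is exactly an elaboration of what the paper leaves implicit: the paper gives no argument beyond the sentence ``the proof is the same as in the ordinary setting,'' and your Atiyah--Bott sketch (with the module structure of $Proj$ over $Vec(X)$ supplying the Bott class and the equivariant bookkeeping preserving central characters) is precisely the intended content.

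One caveat on your ``slicker alternative'': in the paper's logical order, the full isomorphism $K^*_{proj}(P)\cong K_*(C_0(X,\mathcal{A}(P)))$ of Theorem~\ref{projectivevstwisted} is established \emph{using} projective Bott periodicity to reduce to degree~$0$, so invoking twisted $K$-theory wholesale here would be circular. You can salvage it by first proving only the degree-$0$ case of that isomorphism (which the paper does directly, without periodicity), applying it to both $Q$ and $Q\times\R^2$, and then importing $C^*$-algebraic Bott periodicity --- but this amounts to a reordering of the paper rather than a shortcut.
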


The main reason for introducing projective bundles is the relation with twisted $K$-theory and the module bundles for Azumaya bundles. This relation is explained in Section $1.3$ of \cite{mmstva}. We include it for the sake of completeness. Recall that an Azumaya bundle is a bundle of simple matrix algebras. With a principal $PU(n)$-bundle $P\to X$ there is an associated Azumaya bundle on $X$ defined via 
\begin{equation}
\label{azzzuu}
\mathcal{A}\equiv \mathcal{A}(P):=M_n(\C)\times_{PU(n)} P=M_n(\C)\times_{SU(n)} P,
\end{equation}
where $PU(n)$, respectively $SU(n)$, acts via conjugation on $M_n(\C)$. A vector bundle $E_{\mathcal{A}}\to X$ with a fiberwise action of $\mathcal{A}$ will be called an $\mathcal{A}$-module. If $X$ is a compact Hausdorff space, the Serre-Swan theorem states that there is a one-to-one correspondence between isomorphism classes of $\mathcal{A}$-modules and isomorphism classes of projective finitely generated $C(X,\mathcal{A})$-modules.

If $E_{\mathcal{A}}$ is an $\mathcal{A}$-module, we can choose a cover $(U_\alpha)_{\alpha\in I}$ of $X$ over which $P$ and $E_\mathcal{A}$ trivializes. We use the standard notations $U_{\alpha \beta }:=U_\alpha\cap U_\beta $ and $U_{\alpha \beta \gamma }=U_\alpha\cap U_\beta \cap U_\gamma $. By construction, $E_\mathcal{A}$ is trivializable over $U_\alpha$ and there is a vector bundle $E_\alpha\to U_\alpha$ such that as $\mathcal{A}|_{U_\alpha}$-modules $E_{\mathcal{A}}|_{U_\alpha}\cong E_\alpha\otimes \mathcal{A}|_{U_\alpha}$. Since $E_{\mathcal{A}}\to X$ is a vector bundle there are vector bundle isomorphisms $G_{\alpha \beta }:E_\beta |_{U_{\alpha \beta }}\to E_\alpha |_{U_{\alpha \beta }}$ and a $\C^\times$-valued Cech $2$-cocycle $(\theta_{\alpha \beta \gamma })_{\alpha ,\beta ,\gamma \in I}$ for this covering that satisfy the weak cocycle condition 
\begin{equation}
\label{thetaequation}
\theta_{\alpha \beta \gamma }G_{\alpha \beta }G_{\beta \gamma }=G_{\alpha \gamma }\quad\mbox{on}\quad U_{\alpha \beta \gamma }.
\end{equation}
It is always possible to assume that $(\theta_{\alpha \beta \gamma })_{\alpha ,\beta ,\gamma \in I}$ is $U(1)$-valued. Equation \eqref{thetaequation} can be taken as a definition for a $U(1)$-valued Cech $2$-cocycle $(\theta_{\alpha \beta \gamma })_{\alpha ,\beta ,\gamma \in I}$. It is clear from the construction of the Bockstein mapping that the cohomology class of $(\theta_{\alpha \beta \gamma })_{\alpha ,\beta ,\gamma \in I}$ coincides with $\delta(P)$ under the isomorphism $H^3(X,\Z)\cong H^2(X,U(1))$ (cf. Remark \ref{ddremark}). In \cite{mmstva}, the collection $\left((E_\alpha )_{\alpha \in I},(G_{\alpha \beta })_{\alpha ,\beta \in I}\right)$ is called a projective vector bundle data for the Azumaya bundle $\mathcal{A}$ with respect to the covering $(U_\alpha )_{\alpha \in I}$. After possibly refining $(U_\alpha )_{\alpha \in I}$ we can always assume that $E_\alpha =U_\alpha \times \C^N$ and replace $(G_{\alpha \beta })_{\alpha ,\beta \in I}$ by a collection satisfying Equation \eqref{thetaequation} with $G_{\alpha \beta }:U_{\alpha \beta }\to SU(N)$ for some $N$.

We use the notation 
$$L_n:=SU(n)\times_{Z_n}\C=U(n)\times_{U(1)}\C.$$
The line bundle $L_n\to PU(n)$ is by construction a projective line bundle in the $SU(n)$-action induced from left multiplication on $U(n)$.

\begin{prop}
\label{vbdatatoproj}
Let $\mathcal{A}\to X$ be the Azumaya bundle associated with a principal $PU(n)$-bundle $P\to X$, $(U_\alpha )_{\alpha \in I}$ a fine enough cover for $X$ and $\left((U_\alpha \times \C^N)_{\alpha \in I},(G_{\alpha \beta })_{\alpha ,\beta \in I}\right)$ a projective vector bundle data for the Azumaya bundle $\mathcal{A}$ with respect to the covering $(U_\alpha )_{\alpha \in I}$ with $G_{\alpha \beta }:U_{\alpha \beta }\to SU(N)$. After choosing trivializations $f_\alpha :P|_{U_\alpha }\xrightarrow{\sim} PU(n)\times U_\alpha $, the automorphisms of projective bundles on $PU(n)\times U_{\alpha \beta }$ given by  
$$ \id_{L_n}\boxtimes G_{\alpha \beta }:L_n\boxtimes (U_{\alpha \beta }\times \C^N) \to L_n\boxtimes (U_{\alpha \beta }\times \C^N)$$
lifts to transition functions for a projective vector bundle $E\to P$.
\end{prop}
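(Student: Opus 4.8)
The plan is to construct $E$ by a Cech gluing on $P$: over each $P|_{U_\alpha}$ use the chart $f_\alpha$ to model $E$ by the projective bundle $L_n\boxtimes(U_\alpha\times\C^N)$, which is a projective bundle by Proposition~\ref{elementpropproj}(2) as it is the tensor product of the central character $1\mod n\Z$ line bundle $L_n$ with the pullback of $U_\alpha\times\C^N$ (central character $0$). Let $s_{\alpha\beta}\colon U_{\alpha\beta}\to PU(n)$ be the transition functions of $P$ relative to the $f_\alpha$; $PU(n)$-equivariance of the $f_\alpha$ forces these to act by right translation on the $PU(n)$-factor. The first ingredient is that right translation by $h\in PU(n)$ lifts to an automorphism $R_{\tilde h}$ of $L_n=SU(n)\times_{Z_n}\C$ once a lift $\tilde h\in SU(n)$ of $h$ is chosen, and that replacing $\tilde h$ by $z\tilde h$ with $z\in Z_n$ multiplies $R_{\tilde h}$ by the scalar $\chi(z)\in U(1)$, where $\chi$ is the central character $1\mod n\Z$ of $L_n$, realised as the inclusion $Z_n\hookrightarrow U(1)$. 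Refining the cover if necessary, choose lifts $\tilde s_{\alpha\beta}\colon U_{\alpha\beta}\to SU(n)$ of $s_{\alpha\beta}$, and define the candidate transition function $\Psi_{\alpha\beta}$ over $P|_{U_{\alpha\beta}}$ to be the isomorphism which, read through the charts $f_\alpha$ and $f_\beta$, equals $R_{\tilde s_{\alpha\beta}}\boxtimes G_{\alpha\beta}$ --- the $R$-factor being forced by the change of chart and the $G$-factor being the supplied data. Each $\Psi_{\alpha\beta}$ is an $SU(n)$-equivariant morphism of projective bundles, since $R_{\tilde s_{\alpha\beta}}$ is $SU(n)$-equivariant and $G_{\alpha\beta}$ is constant along the $PU(n)$-direction, and restricted to the $\C^N$-direction it is $\id_{L_n}\boxtimes G_{\alpha\beta}$.

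The heart of the argument is the computation of the failure of the cocycle identity. On $U_{\alpha\beta\gamma}$ the chosen lifts give a $Z_n$-valued Cech $2$-cocycle $\mu_{\alpha\beta\gamma}$ measuring the failure of $\tilde s_{\alpha\beta}$ to be multiplicative, and the supplied data satisfies $\theta_{\alpha\beta\gamma}G_{\alpha\beta}G_{\beta\gamma}=G_{\alpha\gamma}$ by \eqref{thetaequation}; multiplying the $L_n$- and $\C^N$-contributions one obtains $\Psi_{\alpha\beta}\Psi_{\beta\gamma}=\omega_{\alpha\beta\gamma}\,\Psi_{\alpha\gamma}$ with $\omega_{\alpha\beta\gamma}=\chi(\mu_{\alpha\beta\gamma})\,\theta_{\alpha\beta\gamma}^{-1}$, a $U(1)$-valued Cech $2$-cocycle on $X$ (note that $\Psi_{\alpha\beta}\Psi_{\beta\gamma}\Psi_{\alpha\gamma}^{-1}$ acts by a scalar on the $L_n$-factor and by a scalar on the $\C^N$-factor). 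I expect the delicate point to be the identification $[\omega]=0$ in $H^2(X,U(1))$: the cocycle $\theta$ represents $\delta(P)$ by Remark~\ref{ddremark}, while $\chi(\mu)$ represents the image of $[P]$ under the connecting map of the central extension $1\to Z_n\to SU(n)\to PU(n)\to1$ followed by $\chi\colon Z_n\hookrightarrow U(1)$, which also equals $\delta(P)$ by naturality of connecting maps along the map of central extensions into $1\to U(1)\to U(n)\to PU(n)\to1$. The care required here is in the signs: one must check that, with $R_{\tilde h}$ covering right translation and with the cocycle convention for the $s_{\alpha\beta}$ dictated by the equivariance of the $f_\alpha$, the class $\chi_*[\mu]$ comes out as $+\delta(P)$ rather than $-\delta(P)$, so that it cancels $-[\theta]$; granting this, $\omega$ is a coboundary.

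Finally, after a further refinement write $\omega_{\alpha\beta\gamma}=\lambda_{\alpha\beta}\lambda_{\beta\gamma}\lambda_{\alpha\gamma}^{-1}$ with $\lambda_{\alpha\beta}\colon U_{\alpha\beta}\to U(1)$ and replace $\Psi_{\alpha\beta}$ by $\lambda_{\alpha\beta}^{-1}\Psi_{\alpha\beta}$; this is again an $SU(n)$-equivariant isomorphism of projective bundles (scalar multiplication is $SU(n)$-equivariant) --- amounting to replacing $G_{\alpha\beta}$ by a cohomologous representative of the projective vector bundle data --- and now $\Psi_{\alpha\beta}\Psi_{\beta\gamma}=\Psi_{\alpha\gamma}$ holds strictly. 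The corrected $\Psi_{\alpha\beta}$ therefore glue the local models $L_n\boxtimes(U_\alpha\times\C^N)$ over the open cover $\{P|_{U_\alpha}\}$ of $P$ into a vector bundle $E\to P$; since each local model is $SU(n)$-equivariant with central character $1\mod n\Z$ and the transition maps are $SU(n)$-equivariant morphisms of projective bundles, the local $SU(n)$-actions patch and the locally constant central character is globally $1\mod n\Z$, so $E$ is a projective bundle on $P$ whose transition functions are, up to the harmless scalars $\lambda_{\alpha\beta}$ and the change-of-chart data, the prescribed $\id_{L_n}\boxtimes G_{\alpha\beta}$. Apart from the sign bookkeeping in $[\omega]=0$, this is the routine verification of a Cech gluing.
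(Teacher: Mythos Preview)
Your argument is correct and follows the same Cech-gluing strategy as the paper: pull back $L_n\boxtimes(U_\alpha\times\C^N)$ along the trivializations $f_\alpha$, define transition isomorphisms, and verify the cocycle condition. The difference is in how the change-of-chart on the $L_n$-factor is handled.

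You make an explicit choice of $SU(n)$-lifts $\tilde s_{\alpha\beta}$ of the transition functions of $P$, which introduces the $Z_n$-valued cocycle $\mu$; you then argue that $\chi_*[\mu]$ and $[\theta]$ both represent $\delta(P)$, so their ratio $\omega$ is a coboundary, and you correct by $\lambda_{\alpha\beta}$. The paper avoids this detour by defining $\tilde G_{\alpha\beta}$ directly via the functorial pullbacks $f_\alpha^*$, $f_\beta^*$ (no lift chosen), and then computing that on triple overlaps the composite of chart changes acting on $L_n$ produces the scalar $\theta_{\alpha\beta\gamma}$ \emph{exactly}, not merely cohomologously. This exactness is legitimate because the projective vector bundle data $(G_{\alpha\beta},\theta_{\alpha\beta\gamma})$ for the $\mathcal A$-module was extracted using the very same trivializations $f_\alpha$ of $P$ (via $\mathcal A|_{U_\alpha}$); thus $\theta$ is not an arbitrary representative of $\delta(P)$ but the specific Dixmier--Douady cocycle attached to the $f_\alpha$. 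Consequently the paper's transition functions satisfy $\tilde G_{\alpha\beta}\tilde G_{\beta\gamma}=\tilde G_{\alpha\gamma}$ on the nose, with no coboundary correction.

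Your route is more robust in that it would still work if $\theta$ were any cocycle cohomologous to the Dixmier--Douady cocycle of $P$, and it makes the role of the central extension transparent; the paper's route is shorter because it exploits the compatibility between the module data and the chosen trivializations, sidestepping both the choice of lifts and the sign check you flagged.
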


Here we use $\boxtimes$ to denote exterior tensor product.

\begin{proof}
Since $L_n\to PU(n)$ is a projective line bundle, indeed $L_n\boxtimes (U_{\alpha }\times \C^N)\to PU(n)\times U_\alpha $ form projective vector bundles by Proposition \ref{elementpropproj}. We consider the projective vector bundles 
$$\tilde{E}_\alpha :=f_\alpha ^*(L_n\boxtimes (U_{\alpha }\times \C^N))\to P|_{U_\alpha }.$$ 
Define the morphism of projective vector bundles $\tilde{G}_{\alpha \beta }:\tilde{E}_\beta |_{U_{\alpha \beta }}\to \tilde{E}_\alpha |_{U_{\alpha \beta }}$ from the diagram:
\[\begin{CD}
\tilde{E}_\alpha |_{U_{\alpha \beta }} @>\tilde{G}_{\alpha \beta }>>\tilde{E}_\beta |_{U_{\alpha \beta }}\\
@Af_\alpha ^*AA @Af_\beta ^*AA  \\
L_n\boxtimes (U_{\alpha \beta }\times \C^N)@>\id_{L_n}\boxtimes G_{\alpha \beta }>>L_n\boxtimes (U_{\alpha \beta }\times \C^N) \\
\end{CD}. \]
Using that $(G_{\alpha \beta })_{\alpha ,\beta \in I}$ is an $SU(N)$-valued Cech cochain satisfying Equation \eqref{thetaequation}, the following identity holds in triple intersections:
\begin{align*}
\tilde{G}_{\alpha \beta }\tilde{G}_{\beta \gamma }&=f_\beta ^*(\id_{L_n}\boxtimes G_{\alpha \beta })(f_\alpha ^{-1})^*f_\gamma ^*(\id_{L_n}\boxtimes G_{\beta \gamma })(f_\beta ^{-1})^*=\\
&=f_\gamma ^*(f_\gamma ^{-1})^*f_\beta ^*(\id_{L_n}\boxtimes G_{\alpha \beta })(f_\alpha ^{-1})^*f_\gamma ^*(\id_{L_n}\boxtimes G_{\beta \gamma })(f_\beta ^{-1})^*f_\alpha ^*(f_\alpha ^{-1})^*=\\
&=f_\gamma ^*(\theta_{\alpha \beta \gamma }\boxtimes G_{\alpha \beta }G_{\beta \gamma })(f_\alpha ^{-1})^*=\tilde{G}_{\alpha \gamma }.
\end{align*}
It follows that the morphisms of projective bundles $(\tilde{G}_{\alpha \beta })_{\alpha ,\beta \in I}$ satisfy the cocycle condition and the projective vector bundles $(\tilde{E}_\alpha )_{\alpha \in I}$ can be glued together to a projective vector bundle $\tilde{E}\to P$ using $(\tilde{G}_{\alpha \beta })_{\alpha ,\beta \in I}$. 
\end{proof}

Conversely, $\mathcal{A}(P)$-modules can be constructed from projective bundles on $P$. We leave the proof of the following Proposition to the reader.

\begin{prop}
Let $P\to X$ be a principal $PU(n)$-bundle, $\mathcal{A}:=\mathcal{A}(P)$ the associated Azumaya bundle and $E\to P$ a projective vector bundle. The vector bundle $\C^n\times_{SU(n)} E\to X$ admits a natural $\mathcal{A}$-module structure induced from the final identity of Equation \eqref{azzzuu}.
\end{prop}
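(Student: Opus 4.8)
The plan is to realise $\C^n\times_{SU(n)}E$ as the descent of an explicit $SU(n)$-equivariant vector bundle on $P$, and then to transport the evident $M_n(\C)$-action along that descent. Write $\pi\colon P\to X$ for the bundle projection, so that $X=PU(n)\backslash P=SU(n)\backslash P$ because the $SU(n)$-action on $P$ factors through $PU(n)$. Let $\underline{\C^n}:=P\times\C^n\to P$, viewed as an $SU(n)$-equivariant vector bundle with $SU(n)$ acting diagonally, through $PU(n)$ on $P$ and on the fibre $\C^n$ (here one uses the standard representation, or its conjugate, according to the sign convention fixing $\Z/n\Z\cong\widehat{Z_n}$). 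Form the $SU(n)$-equivariant vector bundle $\underline{\C^n}\otimes_P E\to P$. By Proposition \ref{elementpropproj}(2) its central character is the sum of the central character of $\underline{\C^n}$ and that of $E$; since $E$ is a projective bundle the latter is $1\bmod n\Z$, and the former is taken to be the opposite character, so $\underline{\C^n}\otimes_P E$ has vanishing central character. Proposition \ref{elementpropproj}(4) then yields a vector bundle on $X$, unique up to isomorphism, whose pullback along $\pi$ is $\underline{\C^n}\otimes_P E$; this is what $\C^n\times_{SU(n)}E$ denotes. Observe that it is precisely the vanishing of the central character that makes $\C^n\times_{SU(n)}E$ a genuine vector bundle: as the $SU(n)$-action on $P$ is not free, the naive orbit space $SU(n)\backslash(\C^n\times E)$ would only be a bundle with singular fibres were the two characters not to cancel.

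For the module structure, matrix multiplication $M_n(\C)\times\C^n\to\C^n$ makes the fibres of $\underline{\C^n}$, hence those of $\underline{\C^n}\otimes_P E$, into fibrewise left $M_n(\C)$-modules, and this is $SU(n)$-equivariant once $M_n(\C)$ carries the conjugation action, since $g(Av)=(gAg^{-1})(gv)$. Thus the morphism of vector bundles over $P$
\[
\bigl(P\times M_n(\C)\bigr)\otimes_P\bigl(\underline{\C^n}\otimes_P E\bigr)\longrightarrow\underline{\C^n}\otimes_P E,\qquad A\otimes(v\otimes e)\longmapsto(Av)\otimes e,
\]
is $SU(n)$-equivariant. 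Now $P\times M_n(\C)$ with the conjugation action has vanishing central character and, by the final identity in \eqref{azzzuu}, descends to $\mathcal{A}=\mathcal{A}(P)$; the bundle $\underline{\C^n}\otimes_P E$ descends to $\C^n\times_{SU(n)}E$; and an $SU(n)$-equivariant morphism between $SU(n)$-equivariant bundles of vanishing central character descends to $X$ (apply Proposition \ref{elementpropproj}(4) to the associated $\Hom$-bundle and use that invariant sections of a pullback from $X$ are sections over $X$). Hence we obtain a bundle map $\mathcal{A}\otimes_X\bigl(\C^n\times_{SU(n)}E\bigr)\to\C^n\times_{SU(n)}E$; fibrewise it is the $M_n(\C)$-action on $\C^n$ tensored with $\id_E$, so associativity and unitality are inherited from $M_n(\C)$, and $\C^n\times_{SU(n)}E$ becomes an $\mathcal{A}$-module. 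Naturality is then immediate: a morphism $E\to E'$ of projective bundles induces an $SU(n)$-equivariant morphism $\underline{\C^n}\otimes_P E\to\underline{\C^n}\otimes_P E'$ intertwining the $P\times M_n(\C)$-actions, which descends to a morphism of $\mathcal{A}$-modules over $X$.

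The one step that needs genuine care is the vanishing of the central character of $\underline{\C^n}\otimes_P E$, since this is what legitimises $\C^n\times_{SU(n)}E$ as a vector bundle over $X$ via Proposition \ref{elementpropproj}(4); correspondingly one must keep track of the sign convention so that the copy of $\C^n$ enters with the central character opposite to that of $E$. Once the algebra bundle $\mathcal{A}$, the module bundle $\C^n\times_{SU(n)}E$, and the action morphism have all been exhibited as descents of $SU(n)$-equivariant data of vanishing central character, the module axioms and naturality follow formally, which is presumably why the verification is left to the reader.
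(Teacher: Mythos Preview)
The paper explicitly leaves this proof to the reader, so there is no argument to compare against; your proposal is a correct and careful fleshing-out of precisely the construction the authors have in mind---realise $\C^n\times_{SU(n)}E$ as the descent via Proposition~\ref{elementpropproj}(4) of the equivariant bundle $\underline{\C^n}\otimes_P E$, and then descend the $SU(n)$-equivariant $M_n(\C)$-action. Your attention to the sign convention on $\C^n$ is exactly the point that needs care: under the natural identification $k\leftrightarrow(\zeta\mapsto\zeta^k)$ the standard representation has central character $+1$, so to cancel against $E$ you must use the dual (yielding a right $\mathcal{A}$-module, which is equally good for the purposes of Theorem~\ref{projectivevstwisted}); your hedge about the convention covers this.
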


\begin{thm}
\label{projectivevstwisted}
If $P\to X$ is a principal $PU(n)$-bundle there is a natural isomorphism $\iota_P:K^*_{proj}(P)\to K_*(C_0(X,\mathcal{A}(P)))$ determined by the property that for compact $X$ and a projective vector bundle $E\to P$:
$$\iota_P[E]=\left[C(X,\C^n\times_{SU(n)}E)\right].$$
\end{thm}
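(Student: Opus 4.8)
The strategy is to route through module bundles: identify $Proj(P)$ with the category of $\mathcal{A}(P)$-modules on $X$, apply the Serre--Swan theorem to reach finitely generated projective $C_0(X,\mathcal{A}(P))$-modules, and take Grothendieck groups. Everything reduces to the compact, degree-zero case. So suppose first that $X$, hence $P$, is compact; as observed in the text, $K^0_{proj}(P)$ is then the Grothendieck group of isomorphism classes of projective bundles. The assignment $E\mapsto \C^n\times_{SU(n)}E$ is one half of an additive equivalence between $Proj(P)$ and the category of $\mathcal{A}(P)$-modules on $X$; its quasi-inverse is obtained by extracting projective vector bundle data from an $\mathcal{A}$-module and regluing it as in Proposition~\ref{vbdatatoproj}. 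That the two composites are naturally isomorphic to the identity is a local matter: one unwinds the transition functions exactly as in the proof of Proposition~\ref{vbdatatoproj}, the point being that the weak cocycle $\theta_{\alpha\beta\gamma}$ measuring the failure of the $G_{\alpha\beta}$ to compose on the nose is precisely absorbed by the projective line bundle $L_n\to PU(n)$; this is the content of Section~1.3 of \cite{mmstva}. Composing with the Serre--Swan equivalence between $\mathcal{A}(P)$-modules and finitely generated projective $C(X,\mathcal{A}(P))$-modules and passing to Grothendieck groups yields an isomorphism $K^0_{proj}(P)\xrightarrow{\sim}K_0(C(X,\mathcal{A}(P)))$ sending $[E]$ to $[C(X,\C^n\times_{SU(n)}E)]$, which is the asserted formula.

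For general locally compact $X$ and for the higher groups, $\iota_P$ is obtained from this case by the usual devices. One extends $E\mapsto\C^n\times_{SU(n)}E$ to projective elliptic complexes, sending $(E_1,E_2,\sigma)$ to the pair of $\mathcal{A}(P)$-modules $\C^n\times_{SU(n)}E_1$, $\C^n\times_{SU(n)}E_2$ together with the $\mathcal{A}$-linear morphism induced by $\sigma$, which is invertible off a compact set; via Serre--Swan this is exactly the input of the difference construction in operator $K$-theory producing a class in $K_0(C_0(X,\mathcal{A}(P)))$. Homotopy invariance and the vanishing on degenerate complexes are immediate, so $\iota_P$ is well defined, and bijectivity follows from the compact case together with the full faithfulness of the equivalence (so that an $\mathcal{A}$-linear morphism invertible at infinity lifts to a $\sigma$). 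For $K^i_{proj}$ one uses that $P\times\R^i\to X\times\R^i$ is again a principal $PU(n)$-bundle with $C_0(X\times\R^i,\mathcal{A}(P\times\R^i))\cong C_0(X,\mathcal{A}(P))\otimes C_0(\R^i)$, so that $\iota_{P\times\R^i}$ and Bott periodicity (for projective $K$-theory, established above, and for $C^*$-algebras) define $\iota_P$ on $K^i_{proj}(P)\cong K_i(C_0(X,\mathcal{A}(P)))$ compatibly with the periodicity isomorphisms. Naturality in $P$, for $PU(n)$-equivariant maps and, via Lemma~\ref{zetaeq}, for $PU(\infty)$-morphisms, holds because $E\mapsto\C^n\times_{SU(n)}E$ commutes with the corresponding pullbacks essentially by the definitions of the pullback of a projective bundle and of $\mathcal{A}(P)$, and matches the stabilizations $\zeta_Q$ with the Morita equivalences $C_0(X,\mathcal{A}(P))\sim C_0(X,\mathcal{A}(P^Q))$; together with the compact formula this pins $\iota_P$ down uniquely.

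The one substantial step is that $E\mapsto\C^n\times_{SU(n)}E$ is a genuine equivalence of categories --- both that it defines an $\mathcal{A}(P)$-module functorial in the projective bundle $E$, and that the regluing of Proposition~\ref{vbdatatoproj} inverts it --- which is precisely where the twist is rendered invisible by tensoring with the projective line bundle $L_n$. Once this is in hand, the passage to Grothendieck groups, the difference-bundle and suspension arguments, and the naturality are routine bookkeeping.
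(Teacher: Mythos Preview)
Your proposal is correct and follows essentially the same route as the paper: both arguments rest on the correspondence $E\mapsto\C^n\times_{SU(n)}E$ between projective bundles and $\mathcal{A}(P)$-modules, Serre--Swan, the difference construction for elliptic complexes in the non-compact case, and Bott periodicity for the higher groups. The only difference is presentational---you isolate the compact case as a clean categorical equivalence and then bootstrap, whereas the paper writes down the explicit invertible matrix $W$ implementing the difference construction and builds the inverse by hand via the isoradial inclusion $C_c\hookrightarrow C_0$---but the underlying mathematics is the same.
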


\begin{proof}
Using Bott periodicity, it suffices to consider $*=0$. Let $(E_1,E_2,\sigma)$ be a projective elliptic complex on $P$. We can assume that there is an open pre-compact set $U\subseteq X$ such that $(E_1,E_2,\sigma)$ is degenerate outside $P|_{U}$. As such, there exists a morphism of projective vector bundles $\sigma^{-1}:E_2\to E_1$ such that $\id_{E_1}-\sigma^{-1}\sigma$ and $\id_{E_2}-\sigma\sigma^{-1}$ have compact supports in $U$. We define the isomorphism of projective vector bundles
\[W:= 
\begin{pmatrix} 
(2-\sigma\sigma^{-1})\sigma& \sigma\sigma^{-1}-\id_{E_2}\\
\id_{E_1}-\sigma^{-1}\sigma& \sigma^{-1}
\end{pmatrix}:E_1\oplus E_2\to E_2\oplus E_1.\]

Since $U$ is pre-compact, the Serre-Swan theorem implies that there exist projections $p_{E_1},p_{E_2}\in M_N(C(\overline{U},\mathcal{A}(P|_{\overline{U}})))$ and isomorphisms
$$p_{E_1}C(\overline{U},\mathcal{A}(P)^N)\cong C(\overline{U},\C^n\times _{SU(n)}E_1|_{\overline{U}})\quad\mbox{and}\quad p_{E_2}C(\overline{U},\mathcal{A}(P)^N)\cong C(\overline{U},\C^n\times _{SU(n)}E_2|_{\overline{U}}).$$
Under these isomorphisms, $W$ induces an element $\tilde{W}\in GL_{2N}(C(\overline{U},\mathcal{A}(P|_{\overline{U}})))$ and we can assume that $\tilde{W}(p_{E_1}\oplus 0)\tilde{W}^{-1}=p_{E_2}$ outside a compact subset of $U$. It follows that $[\tilde{W}(p_{E_1}\oplus 0)\tilde{W}^{-1}]-[p_{E_2}]\in K_0(C_c(U,\mathcal{A}(P|_U)))$ is well defined. We let $i_U:K_0(C_c(U,\mathcal{A}(P|_U)))\to K_0(C_c(X,\mathcal{A}(P)))$ denote the mapping associated with the inclusion and set
\[\iota_P(E_1,E_2,\sigma):=i_U\left([\tilde{W}(p_{E_1}\oplus 0)\tilde{W}^{-1}]-[p_{E_2}]\right)\in K_0(C_c(X,\mathcal{A}(P))).\]
By construction, degenerate elliptic complexes are mapped to $0$ and homotopic elements in $K_0(C_c(X,\mathcal{A}(P)))$ are equal. It follows that $\iota_P:K^0_{proj}(P)\to K_0(C_0(X,\mathcal{A}(P)))$ is well defined.

To define an inverse $\iota_P^{-1}:K_0(C_0(X,\mathcal{A}(P)))\to K^0_{proj}(P)$ we note that the dense embedding $C_c(X,\mathcal{A}(P))\to C_0(X,\mathcal{A}(P))$ is isoradial. It follows that the mapping induced by the inclusion $K_0(C_c(X,\mathcal{A}(P)))\to K_0(C_0(X,\mathcal{A}(P)))$ is an isomorphism. Given a class $x\in K_0(C_0(X,\mathcal{A}(P)))$, we can therefore represent $x$ as $x=[p]-[q]$ where $p,q\in M_N(\C1+C_c(X,\mathcal{A}(P)))$ are projections that are Murray-von Neumann equivalent $p\sim q$ outside a compact subset $K$. As such, there exists a $z\in M_{2N}(\C1+C_c(X,\mathcal{A}(P)))$ such that $z(p\oplus 0)=(q\oplus 0)z$ and $z(Z\setminus K)\subseteq GL_{2N}(\C)$. Following Proposition \ref{vbdatatoproj} and the remarks proceeding it there are projective vector bundles $E_p,E_q\to P$ such that there are $C_0(X,\mathcal{A}(P))$-linear isomorphisms
$$pC_0(X,\mathcal{A}(P)^N)\cong C_0(X,\C^n\times _{SU(n)}E_p)\quad\mbox{and}\quad qC_0(X,\mathcal{A}(P)^N)\cong C_0(X,\C^n\times _{SU(n)}E_q).$$
We are now ready to define $\iota_P^{-1}:K_0(C_c(X,\mathcal{A}(P)))\to K^0_{proj}(P)$. We let $\sigma:E_p\to E_q$ denote the morphism of projective vector bundles associated with $z$ and set
\[\iota_P^{-1}(x):=[E_p,E_q,\sigma].\]
The proof that $\iota_P\circ \iota_P^{-1}=\id_{K_0(C_0(X,\mathcal{A}(P)))}$ and $\iota_P^{-1}\circ \iota_P=\id_{K^0_{proj}(P)}$ are straight-forward, but tedious, and left to the reader.
\end{proof}

In this paper, infinite-dimensional Azumaya bundles will also be of interest. They come from principal $PU(\He)$-bundles for a fixed separable infinite-dimensional Hilbert space $\He$. Motivated by Theorem \ref{projectivevstwisted}, we \emph{define} $K^*_{proj}(P)$ as $K_*(C_0(X,\mathcal{A}(P)))$ if $P$ is a $PU(\He)$-bundle and $\He$ is infinite-dimensional and separable.

\begin{prop}
Let $P\to X$ be a $PU(n)$-principal bundle and $Q\to P$ a $PU(n)$-invariant principal $U(l)$-bundle. Let $V_l$ denote the fundamental representation of $U(l)$ and $V_l(Q)\to X$ the associated vector bundle. There is a natural isomorphism
\[\mathcal{A}(P^Q)\cong \mathcal{A}(P)\otimes \mathrm{End}(V_l(Q)).\]
Furthermore, under this isomorphism $\iota_{P^Q}(E^Q)=\iota_P(E)\otimes V_l(Q)$.
\end{prop}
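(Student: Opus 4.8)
The plan is to deduce both assertions from two elementary facts about associated bundles. The first is \emph{induction in stages}: for a closed subgroup $K\le H$ acting on a space $Y$ and an $H$-representation $V$ there is a natural isomorphism $V\times_H(H\times_K Y)\cong V\times_K Y$. The second is that, for a principal $G$-bundle $Q\to X$, the functor $V\mapsto V\times_G Q$ from finite-dimensional $G$-representations to vector bundles on $X$ is symmetric monoidal; in particular $(V\otimes W)\times_G Q\cong(V\times_G Q)\otimes_X(W\times_G Q)$, naturally. I will use along the way that $Q\to X$ is itself a principal $PU(n)\times U(l)$-bundle: the $U(l)$-action is free since $Q\to P$ is principal, the lifted $PU(n)$-action is free since $P\to X$ is principal, the two actions commute by the $PU(n)$-invariance of $Q$, the orbit space is $(Q/U(l))/PU(n)=X$, and local triviality follows from the Slice Theorem since $PU(n)$ and $U(l)$ are compact.

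For the first assertion, start from $P^Q=PU(nl)\times_{PU(n)\times U(l)}Q$; induction in stages gives $\mathcal{A}(P^Q)=M_{nl}(\C)\times_{PU(nl)}P^Q\cong M_{nl}(\C)\times_{PU(n)\times U(l)}Q$, where $PU(n)\times U(l)$ acts by conjugation through $i_{n,l}$. Under $M_{nl}(\C)=\mathrm{End}(\C^n\otimes\C^l)\cong M_n(\C)\otimes M_l(\C)$ this action becomes the tensor of the conjugation action of $PU(n)$ on $M_n(\C)$ and of $U(l)$ (through $PU(l)$) on $M_l(\C)$, so monoidality yields $\mathcal{A}(P^Q)\cong\big(M_n(\C)\times_{PU(n)\times U(l)}Q\big)\otimes_X\big(M_l(\C)\times_{PU(n)\times U(l)}Q\big)$. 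Collapsing the $U(l)$-factor in the first tensorand, on which $U(l)$ acts trivially, gives $M_n(\C)\times_{PU(n)}P=\mathcal{A}(P)$; collapsing the $PU(n)$-factor in the second, on which $PU(n)$ acts trivially, gives $M_l(\C)\times_{U(l)}(Q/PU(n))=\mathrm{End}(V_l(Q))$, since $Q/PU(n)\to X$ is the principal $U(l)$-bundle underlying $V_l(Q)$. Naturality is automatic because every step is functorial.

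For the second assertion I would first reduce to a single test class. Both $\iota_{P^Q}$ and $\iota_P$ are additive and kill degenerate complexes, $\zeta_Q$ is additive and sends $[E]$ to $[E^Q]$ (Lemma \ref{zetaeq}), and tensoring by $V_l(Q)$ is additive; since $\iota_P$ is an isomorphism (Theorem \ref{projectivevstwisted}) and all maps in sight are natural, it suffices to verify the equality on the class of a single projective vector bundle $E\to P$ over a compact base $X$. By the defining property of $\iota$ on such classes, the equality then amounts to a natural isomorphism of $\mathcal{A}(P^Q)$-modules $\C^{nl}\times_{SU(nl)}E^Q\cong(\C^n\times_{SU(n)}E)\otimes_X V_l(Q)$ that is compatible with the decomposition $\mathcal{A}(P^Q)\cong\mathcal{A}(P)\otimes\mathrm{End}(V_l(Q))$ of the first part, with $\mathrm{End}(V_l(Q))$ acting in the $V_l(Q)$-slot. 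To build it, unwind $E^Q=SU(nl)\times_{S(U(n),U(l))\times Z_{nl}}\pi_{\tilde Q}^*E$: induction in stages collapses $\C^{nl}\times_{SU(nl)}E^Q$ to an associated bundle of $\pi_{\tilde Q}^*E$ over $\tilde Q$ for the group $S(U(n),U(l))\times Z_{nl}$; writing $\C^{nl}=\C^n\otimes\C^l$ as an $S(U(n),U(l))$-representation, observing that $\pi_{\tilde Q}^*E$ only feels the $U(n)$-slot on fibres, and recombining the $U(l)$-slot with the $U(1)$ implicit in $\tilde Q=Q/U(1)$ and in $L_Q$ to reconstruct $V_l(Q)$, one identifies the result with $(\C^n\times_{SU(n)}E)\otimes_X V_l(Q)$. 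The conjugation (Azumaya-module) actions then match because the decomposition $\C^{nl}=\C^n\otimes\C^l$ is exactly the one used in the first part.

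The one step I expect to require genuine care, rather than routine manipulation, is the last one: tracking the several scalar subgroups — $Z_n$, $Z_l$, $Z_{nl}$, and the copies of $U(1)$ hidden in $L_Q$ and in the standard-representation factors — and checking that, after the tensorings above, all of them act trivially, so that the quotients are honest vector bundles and the tensor-splitting isomorphism is well defined. This is where the arithmetic modulo $n$, $l$ and $nl$ lives; it is governed by the central-character calculus of Proposition \ref{elementpropproj} together with the central characters of $L_Q$ and $L_n$ recorded around Lemma \ref{zetaeq} and Proposition \ref{vbdatatoproj}, but it is the part of the argument that deserves to be written out in full.
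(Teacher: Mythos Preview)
Your proof is correct and follows essentially the same route as the paper: the paper's entire argument is the one-line chain $\mathcal{A}(P^Q)=M_{nl}(\C)\times_{PU(nl)} P^Q\cong (M_n(\C)\otimes M_l(\C)) \times_{PU(n)\times U(l)}Q\cong \mathcal{A}(P)\otimes \mathrm{End}(V_l(Q))$, which is exactly your induction-in-stages plus monoidality, and the second assertion is left entirely implicit. Your unpacking of the module identification $\C^{nl}\times_{SU(nl)}E^Q\cong(\C^n\times_{SU(n)}E)\otimes_X V_l(Q)$ therefore goes beyond what the paper supplies, and the central-character bookkeeping you flag is indeed the only place requiring care.
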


\begin{proof}
The statement follows by observing that
\[\mathcal{A}(P^Q)=M_{nl}(\C)\times_{PU(nl)} P^Q\cong (M_n(\C)\otimes M_l(\C)) \times_{PU(n)\times U(l)}Q\cong \mathcal{A}(P)\otimes \mathrm{End}(V_l(Q)).\]
\end{proof}

A consequence of this Proposition is that the Dixmier-Douady invariant does not depend on the $PU(\infty)$-isomorphism class of $P$. Conversely, the $PU(\infty)$-isomorphism class of a principal $PU(n)$-bundle $P$ is determined by its Dixmier-Douady class, see for instance the proof of \cite[Theorem $9.13$]{cumero}.

\subsection{Cup products of projective bundles}

If $\pi_P:P\to X$ is a principal $PU(n)$-bundle and $\pi_{P'}:P'\to X$ is a principal $PU(n')$-bundle, their fiber product over $X$ is defined by
$$P\times_XP':=\{(p,p')\in P\times P': \;\pi_P(p)=\pi_{P'}(p')\}.$$
The product of $P$ with $P'$ (which is a principal $PU(nn')$-bundle) is defined via
\[PP':=PU(nn')\times_{PU(n)\times PU(n')} (P\times_X P')\to X.\]
It is straightforward to verify that $PP'\cong P'P$ as principal $PU(nn')$-bundles. There is a $PU(n)\times PU(n')$-equivariant mapping $j_{P,P'}:P\times_X P'\to PP'$. If $E$ is a projective bundle over $P$ and $E'$ a projective bundle over $P'$, the tensor product $E\otimes E'\to PP'$ is a projective bundle defined by
$$E\otimes E':=SU(nn')\times_{SU(n)\times SU(n')} (E\boxtimes E').$$
In fact, the projective bundle $E\otimes E'$ is determined by the equation
\[j_{P,P'}^*(E\otimes E')=E\otimes _X E',\quad\mbox{where} \quad E\otimes_X E':=E\boxtimes E'|_{P\times_X P'}.\]

An important special case occurs when $P'=X$ in which case $PP'=P$. If $E\to P$ is a projective vector bundle and $E'\to X$ a vector bundle, then 
$$E\otimes E'=E\otimes \pi_P^*E'=E\otimes_XE'.$$ 
More generally, if $P'$ is trivializable, every projective bundle on $P'$ is of the form $(\pi_{P'})_\infty^*E_0$ for some vector bundle $E_0\to X$ by Proposition \ref{trivialbundle}. Here the pullback is as a stable mapping. In this case, $E\otimes \pi_{P'}^*E_0$ is in fact only the stable pullback of $E\otimes_X E_0$ from $P$ to $PP'$, the same is true if $\delta(P')=0$.

\begin{lem}
\label{productprop}
The tensor product of projective bundles over principal bundles induces a bilinear mapping 
\[K^*_{proj}(P)\times K^*_{proj}(P')\to K^*_{proj}(PP').\]
The tensor product is associative on triple tensor products. Finally, there is a natural isomorphism 
\[\mathcal{A}(PP')\cong \mathcal{A}(P)\otimes_X\mathcal{A}(P')\] 
making the natural transformation $\iota_P$ multiplicative when equipping twisted $K$-theory with the usual cup product.
\end{lem}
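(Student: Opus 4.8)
The plan is to establish the three assertions of Lemma \ref{productprop} in sequence: first the well-definedness of the bilinear product, then its associativity, and finally compatibility with the natural transformation $\iota_P$ and the cup product on twisted $K$-theory.

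For the bilinear map, I would first check that the tensor product $E\otimes E'$ of projective bundles is well defined at the level of bundles, which essentially follows from Proposition \ref{elementpropproj}(2): the exterior tensor $E\boxtimes E'$ is $SU(n)\times SU(n')$-equivariant with central character $(1,1)$, and inducing up along $SU(n)\times SU(n')\hookrightarrow SU(nn')$ (using that $Z_{nn'}$ contains the image of $Z_n\times Z_{n'}$ and acts through the character $1\bmod nn'$) produces an $SU(nn')$-equivariant bundle with central character $1\bmod nn'$, i.e.\ a projective bundle on $PP'$. Since the tensor product is additive in each variable and sends a pair where one entry is elementary to an elementary complex (here working with elliptic complexes: $(E_1,E_2,\sigma)\otimes(E_1',E_2',\sigma')$ is given by the usual graded tensor $\sigma\otimes\sigma'$, elliptic since it is an isomorphism off the product of the two compact exceptional sets), and is compatible with homotopies by applying the same construction fibrewise over $[0,1]$, it descends to a bilinear pairing on $K$-theory. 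Bott periodicity lets one reduce the higher-degree statements to degree $0$.

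Associativity on triple tensor products is checked by comparing $(E\otimes E')\otimes E''$ and $E\otimes(E'\otimes E'')$ as projective bundles on $(PP')P''\cong P(P'P'')$; both are canonically identified with $SU(nn'n'')\times_{SU(n)\times SU(n')\times SU(n'')}(E\boxtimes E'\boxtimes E'')$ via the associativity of induction, which one verifies by pulling back along $j$-maps to $P\times_X P'\times_X P''$ where everything becomes the ordinary exterior tensor $E\boxtimes E'\boxtimes E''$ restricted to the fibre product, and invoking the determining equation $j_{P,P'}^*(E\otimes E')=E\otimes_X E'$ twice.

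For the last assertion, I would first produce the isomorphism $\mathcal{A}(PP')\cong\mathcal{A}(P)\otimes_X\mathcal{A}(P')$ by the same bookkeeping as in the proof of the Proposition preceding Subsection 1.4: $\mathcal{A}(PP')=M_{nn'}(\C)\times_{PU(nn')}PP'\cong (M_n(\C)\otimes M_{n'}(\C))\times_{PU(n)\times PU(n')}(P\times_X P')\cong\mathcal{A}(P)\otimes_X\mathcal{A}(P')$, using $M_{nn'}(\C)=M_n(\C)\otimes M_{n'}(\C)$ with the block action and that $j_{P,P'}$ realises the reduction of structure group. Then multiplicativity of $\iota_P$ is verified on generators: for compact $X$ and projective vector bundles $E\to P$, $E'\to P'$, one computes that $\C^{nn'}\times_{SU(nn')}(E\otimes E')\cong(\C^n\times_{SU(n)}E)\otimes_X(\C^{n'}\times_{SU(n')}E')$ as modules over $\mathcal{A}(P)\otimes_X\mathcal{A}(P')$, which follows again by pulling back along the fibre product and decomposing $\C^{nn'}=\C^n\otimes\C^{n'}$ compatibly with the $SU(n)\times SU(n')$-action, so that $\iota_{PP'}[E\otimes E']=\iota_P[E]\cup\iota_{P'}[E']$ under the standard external product on twisted $K$-theory $K_*(C_0(X,\mathcal{A}(P)))\times K_*(C_0(X,\mathcal{A}(P')))\to K_*(C_0(X,\mathcal{A}(P)\otimes_X\mathcal{A}(P')))$; the general case follows since such classes generate and $\iota_P$ is a natural isomorphism. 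I expect the main obstacle to be purely organisational: keeping the various central-character and induced-representation identifications consistent across the threefold associativity diagram and checking that the isomorphism $\mathcal{A}(PP')\cong\mathcal{A}(P)\otimes_X\mathcal{A}(P')$ intertwines the module structures in the way needed for multiplicativity, rather than any genuine mathematical difficulty. Most of these verifications are routine and I would leave the more tedious diagram chases to the reader.
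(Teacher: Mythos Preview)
Your proposal is correct and follows essentially the same approach as the paper: the paper's proof invokes biadditivity and associativity of the tensor product on the semigroup of projective bundles together with the universal property of the $K$-group, and then derives multiplicativity of $\iota_P$ from exactly the chain of isomorphisms $M_{nn'}(\C)\times_{PU(nn')}PP'\cong M_{nn'}(\C)\times_{SU(n)\times SU(n')}(P\times_XP')\cong(M_n(\C)\times_{SU(n)}P)\otimes_X (M_{n'}(\C)\times_{SU(n')}P')$ that you also write down. Your version is simply more detailed, spelling out the elliptic-complex level checks and the module computation $\C^{nn'}\times_{SU(nn')}(E\otimes E')\cong(\C^n\times_{SU(n)}E)\otimes_X(\C^{n'}\times_{SU(n')}E')$ that the paper leaves implicit.
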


\begin{proof}
Since the tensor product is a biadditive and associative operation on the semigroup of projective bundles, the universal property of $K$-groups implies that the same properties holds for projective $K$-theory. That $\iota_P$ is multiplicative under the tensor product follows from that 
\begin{align*}
 M_{nn'}(\C)\times_{PU(nn')}PP'&\cong M_{nn'}(\C)\times_{SU(n)\times SU(n')}(P\times_XP')\cong\\
&\cong(M_n(\C)\times_{SU(n)}P)\otimes_X (M_{n'}(\C)\times_{SU(n')}P')
\end{align*}
and these isomorphisms commute with the construction of $\iota_P$.
\end{proof}

\subsection{Push forwards}
\label{pushforsubsec}

In this subsection, the push forward of a projective bundle will be considered using the map $\iota_P$ and the approach in \cite{thomcareywang}.  Smooth principal $PU(\He)$-bundles over a manifold with boundary are the objects of interest here. To simplify notation we will assume that the manifold $X$ is even-dimensional, this can always be attained after taking cartesian product with $S^1$.

To construct the push forward, we will use the frame bundle $Fr(X)\to X$ of an oriented $2m$-dimensional manifold which is the $SO(2m)$-principal bundle of oriented frames on $T^*X$. The group $Spin(2m)$ acts on the space of complex spinors $S_m$ via the complex spin representation. The space of complex spinors is $2^{m}$-dimensional. The spin-group forms a two-fold cover of $SO(2m)$. Thus we obtain a projective representation $SO(m)\to PU(2^{m})$. Let
\[Pr(X):=Fr(X)\times_{SO(2m)}PU(2^{m})\quad\mbox{and}\quad \tilde{P}:=Pr(X)\cdot P.\]
We use the notation $\C l(X)$ for the complex Clifford algebra bundle over $X$. One has that $\C l(X)=\mathcal{A}(Pr(X))$ and $\delta(Pr(X))=W_3(X)$, the third integral Stiefel-Whitney class. A choice of spin$^c$-structure on $X$ is a lift of $Fr(X)$ to a $Spin^c(m)$-bundle. This in turn produces a vector bundle $S_X\to X$ and an isomorphism $\C l(X)\cong \mathrm{End}(S_X)$.

\begin{prop}
\label{spincstruazu}
There is a natural isomorphism of Azumaya bundles
\[\lambda_P:\mathcal{A}(\tilde{P})\xrightarrow{\sim} \mathcal{A}(P)\otimes \C l(X).\]
In particular, a choice of spin$^c$-structure on $X$ induces an isomorphism
\[\tilde{\iota}_P:K^*_{proj}(\tilde{P})\xrightarrow{\sim} K^*_{proj}(P).\]
\end{prop}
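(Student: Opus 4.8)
The plan is to construct the isomorphism $\lambda_P$ by unwinding the definitions of the associated Azumaya bundles in terms of frame bundles and then invoking the multiplicativity already established in Lemma \ref{productprop}. Recall that $\tilde P = Pr(X)\cdot P$ is, by definition, the product of principal bundles $Pr(X)P$ in the sense of Subsection 1.4. By Lemma \ref{productprop} there is a natural isomorphism of Azumaya bundles $\mathcal{A}(Pr(X)P)\cong \mathcal{A}(Pr(X))\otimes_X \mathcal{A}(P)$. Since $\mathcal{A}(Pr(X)) = \C l(X)$ by the discussion preceding the statement, composing these identifications gives the desired $\lambda_P:\mathcal{A}(\tilde P)\xrightarrow{\sim}\mathcal{A}(P)\otimes \C l(X)$. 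Naturality in $P$ follows because each step — the product construction $P\mapsto Pr(X)P$, the passage to associated Azumaya bundles, and the isomorphism of Lemma \ref{productprop} — is natural with respect to morphisms of principal $PU(\He)$-bundles; I would state this naturality explicitly as the content of "natural" in the proposition.

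For the second assertion, a choice of spin$^c$-structure on $X$ provides the spinor bundle $S_X\to X$ together with an isomorphism $\C l(X)\cong \mathrm{End}(S_X)$. Tensoring $\lambda_P$ with this isomorphism yields $\mathcal{A}(\tilde P)\cong \mathcal{A}(P)\otimes \mathrm{End}(S_X)$. Now $\mathrm{End}(S_X) = \mathrm{End}(V_l(Q))$ for the trivial "Morita bundle" $Q$ whose associated vector bundle is $S_X$ — more precisely, the situation is the one covered by the Proposition preceding Subsection 1.4 (relating $\mathcal{A}(P^Q)$ to $\mathcal{A}(P)\otimes\mathrm{End}(V_l(Q))$): Morita equivalence of $C_0(X,\mathcal{A}(P)\otimes\mathrm{End}(S_X))$ with $C_0(X,\mathcal{A}(P))$ induces an isomorphism on $K$-theory. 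Composing with the definition $K^*_{proj}(\tilde P) = K_*(C_0(X,\mathcal{A}(\tilde P)))$ (and likewise for $P$) gives the isomorphism $\tilde\iota_P:K^*_{proj}(\tilde P)\xrightarrow{\sim}K^*_{proj}(P)$.

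The routine parts are the bookkeeping of structure groups: checking that the $SO(2m)\to PU(2^m)$ reduction is compatible with the associated-bundle constructions, and that the Morita equivalence $\mathcal{A}(P)\otimes\mathrm{End}(S_X)\sim_M \mathcal{A}(P)$ is implemented concretely by the imprimitivity bimodule $C_0(X, \mathcal{A}(P)\otimes\mathrm{Hom}(\C,S_X))$, so that the induced $K$-theory isomorphism is the standard stabilization map. The main point requiring care — and the step I expect to be the real obstacle — is verifying that the identifications are genuinely \emph{natural}, i.e. that $\lambda_P$ (and hence $\tilde\iota_P$) commutes with the maps induced by equivariant $PU(\He)$-morphisms $P_1\to P_2$, since $\tilde P$ depends on $P$ only through a product construction but one must track how the Clifford factor $Pr(X)$ interacts with stabilizations in the $P$-variable. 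This should reduce to the associativity and functoriality statements already built into Lemma \ref{productprop}, but spelling it out requires being careful about the order of tensor factors and the various $PU(\infty)$-isomorphisms involved.
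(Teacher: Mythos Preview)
Your proposal is correct and follows essentially the same route as the paper: the paper's proof is a one-line remark that the first isomorphism follows directly from Lemma \ref{productprop} (together with $\mathcal{A}(Pr(X))=\C l(X)$), and that the second follows because a spin$^c$-structure provides $S_X$ with $\C l(X)\cong \mathrm{End}(S_X)$, yielding a Morita equivalence $\mathcal{A}(P)\sim_M \mathcal{A}(P)\otimes \C l(X)$. Your extended discussion of naturality and the explicit imprimitivity bimodule is more detailed than what the paper records, but it is exactly the kind of unpacking the paper leaves implicit.
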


This proposition follows directly from Lemma \ref{productprop} and the fact that the bundle $S_X$ associated with a spin$^c$-structure produces a Morita equivalence $\mathcal{A}(P)\sim_M \mathcal{A}(P)\otimes \C l(X)$. For a manifold with boundary $X$ we use the notation $X^\circ$ for the interior of $X$.

\begin{deef}
\label{push}
Let $P\to X$ and $P'\to X'$ be smooth principal $PU(n)$-bundles on smooth compact manifolds with boundaries. If $f:P\to P'$ is a smooth equivariant mapping such that $f$ restricts to a proper mapping $f|:P|_{X^\circ}\to P'|_{X'^\circ}$, we define the push forward
\[f_!:K^*_{proj}(\tilde{P})\to K^*_{proj}(\tilde{P}')\]
by declaring the following diagram to be commutative
\[\begin{CD}
K^*_{proj}(\tilde{P})@>f_!>>K^*_{proj}(\tilde{P}') \\
@VV\lambda_P\circ \iota_{\tilde{P}}V @VV\lambda_{P'}\circ \iota_{\tilde{P}'}V \\
K_*(C(X,\mathcal{A}(P)\otimes \C l(X)))\;\;@>f_!>> K_*(C(X',\mathcal{A}(P')\otimes \C l(X'))) \\
@VVPD_X V @VVPD_{X'}V \\
K^*(C_0(X^\circ,\mathcal{A}(P)))@>f_*>>K^*(C_0(X'^\circ,\mathcal{A}(P'))) \\
\end{CD},\]

\end{deef}

Our definition of push forward differs from the construction of push-forwards of \cite{thomcareywang} in a very subtle maner. This difference might at first seem trivial but plays a rather important role, for instance for the fractional index. We are considering equivariant mappings $f:P\to P'$ which in regards to $K$-theory contains two equally important pieces of information: the proper mapping on the base $f_0:X\to X'$ and an isomorphism of $PU(\He)$-bundles $\Psi_f:P\cong f^*P'$. The difference between our construction and the one in \cite{thomcareywang} is that they work only with the mapping $f_0$ on the base which indeed induces a push-forward:
\[(f_0)_!:K_*(C(X,\mathcal{A}(f^*P')\otimes \C l(X)))\xrightarrow{\sim}K_*(C(X',\mathcal{A}(P')\otimes \C l(X'))).\]
Using all the data encoded in an equivariant mapping $f$, the push-forward of Definition \ref{push} along $f$ can be recovered from the push-forward of \cite{thomcareywang} along the base by 
\[f_!:=(f_0)_!\Psi_f.\]

The construction of the push-forward in \cite{thomcareywang} uses the fact that the mapping on the base $f_0$ can be factored over the zero section of a vector bundle, an embedding of an open subset and a trivial sphere bundle. Finally, the push-forward is defined on the factors separately. Let us explain what happens if $P'$ is a vector bundle over $P$ and $0:P\to P'$ denotes the zero section. By assumption $P=0^*P'$, $\Psi_0=\id$ and
\[0_!:K^*_{proj}(\tilde{P})\to K^*_{proj}(\tilde{P}'),\]
is conjugate via $\iota_P$  to the twisted Thom isomorphism $K_*(C(X,\mathcal{A}(P)\otimes \C l(X)))\xrightarrow{\sim}K_*(C(X',\mathcal{A}(P')\otimes \C l(X')))$. Furthermore, if $f:P\to P'$ is an embedding of spin$^c$-$PU(n)$-spaces then $f_!:K^*_{proj}(\tilde{P})\to K^*_{proj}(\tilde{P}')$ is the composition of the Thom isomorphism $0_!:K^*_{proj}(\tilde{P})\to K^*_{proj}(N_f)$ onto the horizontal normal bundle of $f$ and extension by $0$. The following Lemma is a direct consequence of the naturality of Poincar\'e duality: 

\begin{lem}
\label{pullbackpushforward}
Push forward commutes with pullback in the sense that if 
\[\begin{CD}
P@>j>>P' \\
@VVqV @VVpV \\
P''@>i>>P''' \\
\end{CD}\]
commutes then the following diagram also commutes:
\[\begin{CD}
K^*_{proj}(P')@>j^*>>K_{proj}^*(P) \\
@VVp_!V @VVq_!V \\
K^*_{proj}(P''')@>i^*>>K^*_{proj}(P'') \\
\end{CD}.\]
\end{lem}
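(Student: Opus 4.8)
The plan is to reduce the statement to the naturality of Poincar\'e duality (the twisted $KK$-theoretic duality $PD_X:K_*(C(X,\mathcal{A}(P)\otimes \C l(X)))\xrightarrow{\sim}K^*(C_0(X^\circ,\mathcal{A}(P)))$) together with the naturality of the isomorphisms $\iota_P$ and $\lambda_P$ that were established earlier in the section. First I would unwind the hypothesis: the commuting square of $PU(n)$-equivariant maps $j,q,i,p$ descends to a commuting square of proper maps on the bases, say $j_0,q_0,i_0,p_0$, together with compatible isomorphisms $\Psi_j, \Psi_q, \Psi_i, \Psi_p$ of the relevant $PU(\He)$-bundles. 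Since $f_!$ was defined in Definition~\ref{push} as $(f_0)_!\Psi_f$, and since the $\Psi$'s are functorial under composition (so that $\Psi_p\circ \Psi_j = \Psi_i\circ \Psi_q$ up to the canonical identifications), it suffices to prove the square commutes after applying the vertical isomorphisms $\lambda_{(\cdot)}\circ\iota_{(\cdot)}$ from Definition~\ref{push}, i.e. to prove the corresponding square in twisted $K$-theory,
\[\begin{CD}
K_*(C(X',\mathcal{A}(P')\otimes \C l(X')))@>j_0^*>>K_*(C(X,\mathcal{A}(P)\otimes \C l(X))) \\
@VV(p_0)_!V @VV(q_0)_!V \\
K_*(C(X''',\mathcal{A}(P''')\otimes \C l(X''')))@>i_0^*>>K_*(C(X'',\mathcal{A}(P'')\otimes \C l(X''))) \\
\end{CD},\]
commutes, where $j_0^*$, $i_0^*$ are the pullback maps (using $\Psi$ to identify $j_0^*$ of the target twist with the source twist).

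The core step is then the compatibility of the Gysin/wrong-way map $(p_0)_!$ with pullback along $i_0$. Here I would invoke the factorization of $p_0$ used in \cite{thomcareywang}: write $p_0$ (after a homotopy) as a composition of a zero-section embedding into a vector bundle, an open embedding, and projection off a trivial sphere/Euclidean factor, and check naturality on each factor. For the zero-section factor, $(p_0)_!$ is the twisted Thom isomorphism and naturality is the naturality of the Thom class under pulling back the vector bundle along $i_0$ (the Cartesian square means the normal/vector bundle upstairs is the $i_0$-pullback of the one downstairs). For an open embedding, $(p_0)_!$ is extension by zero and naturality is clear from excision. For the projection factor it is Bott periodicity, again natural. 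Assembling these gives the commuting $K$-theory square, and transporting back through $\lambda$, $\iota$ and the $\Psi$'s yields the Lemma. Alternatively — and this is what I expect the authors do, given the sentence preceding the statement — one sidesteps the factorization entirely by phrasing $(p_0)_!$ as a Kasparov product with a fixed fundamental class $[\Delta_X]\in KK(C(X)\otimes C_0(X^\circ), \C)$ (suitably twisted), whereupon the naturality of the external and internal Kasparov products, together with functoriality of the duality classes under $i_0$, gives the square directly.

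The main obstacle is bookkeeping the twists and the isomorphisms $\Psi$ correctly: in the diagram the four spaces carry a priori different $PU(n)$-bundles $P,P',P'',P'''$, and the horizontal maps $i^*,j^*$ on projective $K$-theory are only defined because $P\cong j^*P'$ and $P''\cong i^*P'''$ as part of the data of the equivariant maps; one must check that the identification $p^*P''' \cong j^*P' \cong P$ coming from the two sides of the square agrees, so that $q_!\circ j^*$ and $i^*\circ p_!$ land in $K^*_{proj}(P'')$ via the \emph{same} twist identification. Once one grants the convention (stated just before Subsection 1.3) that all operations depend naturally on the components of a $PU(\infty)$-morphism, this coherence is automatic, but it is the point that genuinely needs the hypothesis that the \emph{square of equivariant maps} commutes, not merely the square on the bases. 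Beyond that, everything is the standard naturality package for Poincar\'e duality and Gysin maps, applied in the twisted setting, so I would not grind through it in detail.
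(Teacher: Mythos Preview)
Your proposal is correct and matches the paper's approach exactly: the paper's entire proof is the single sentence ``The following Lemma is a direct consequence of the naturality of Poincar\'e duality,'' and you have supplied precisely the unpacking that justifies that sentence, including the reduction through $\iota_P$, $\lambda_P$ and the $\Psi$-data, and the alternative of invoking the $KK$-theoretic duality class directly rather than the factorization from \cite{thomcareywang}. Your observation about the coherence of the twist identifications coming from the two sides of the square is the one genuine point of care, and you handle it correctly.
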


\subsection{Example: Roots of line bundles}
\label{rootsoflinebundles}

As an example of how to construct a principal $PU(n)$-bundle we will use the exponential mapping $H^2(X,\Q)\to H^2(X,\underline{U(1)})$; we let $\underline{U(1)}$ and $\underline{\R}$ denote the constant sheafs and realize the cohomology groups $H^2(X,\Q)$, $H^2(X,\underline{U(1)})$ et cetera as Cech cohomology groups. The Dixmier-Douady class of the principal bundles constructed in this subsection is $0$ as the mapping $H^2(X,\underline{U(1)})\to H^3(X,\Z)$ vanishes on the image of $H^2(X,\underline{\R})\to H^2(X,\underline{U(1)})$. 

Let $\omega\in H^2(X,\Q)$ and assume that we have chosen a Cech cocycle $(\omega_{\alpha\beta\gamma})_{\alpha,\beta,\gamma\in I}$ on a good cover $(U_\alpha)_{\alpha\in I}$ representing it.  Assume that there is an $N\in \N$ such that $N\omega$ is in the image of $H^2(X,\Z)\to H^2(X,\Q)$, this holds for any $\omega$ whenever $\dim_\Q H^2(X,\Q)<\infty$. In particular, there is a cochain $(u_{\alpha\beta})_{\alpha,\beta\in I}\in C^1((U_\alpha)_{\alpha\in I},\R)$ such that $(\e^{iu_{\alpha\beta}})_{\alpha,\beta\in I}\in Z^1((U_\alpha)_{\alpha\in I},U(1))$ and 
\begin{equation}
\label{omegaequation}
\e^{i\omega_{\alpha\beta\gamma}}=\e^{\frac{i}{N}\left(u_{\alpha\beta}+u_{\beta\gamma}u_{\gamma\alpha}\right)}.
\end{equation}
If $N$ is large enough, we can find a cochain $(c_{\alpha\beta})_{\alpha,\beta\in I}\in C^1((U_\alpha)_{\alpha\in I},U(N))$ such that 
\[c_{\alpha\beta}c_{\beta\gamma}c_{\gamma\alpha}=\e^{\frac{i}{N}\left(u_{\alpha\beta}+u_{\beta\gamma}+u_{\gamma\alpha}\right)}.\]
Let us fix such a cochain $(c_{\alpha\beta})_{\alpha,\beta\in I}$ and let $P_\omega$ denote the principal $PU(N)$-bundle associated with the $PU(N)$-cocycle $(c_{\alpha\beta}\mod U(1))_{\alpha,\beta\in I}$. We will also let $L^N_\omega\to X$ denote the line bundle associated with the cocycle $(\e^{iu_{\alpha\beta}})_{\alpha,\beta\in I}$.

\begin{prop}
\label{conel}
If $L_\omega^N$ is the line bundle constructed in the previous paragraph, then $c_1(L_\omega^N)=N\omega$.
\end{prop}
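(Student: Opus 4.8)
The plan is to trace through the construction in the previous paragraph and identify the Chern class computation with the one that proves Equation \eqref{omegaequation} at the level of Chern classes. First I would recall that the line bundle $L_\omega^N\to X$ was defined as the line bundle associated with the $U(1)$-valued Cech $1$-cocycle $(\e^{iu_{\alpha\beta}})_{\alpha,\beta\in I}$ on the good cover $(U_\alpha)_{\alpha\in I}$; its first Chern class is, by the standard identification $H^1(X,\underline{U(1)})\cong H^2(X,\Z)$ via the Bockstein of the exponential sequence $0\to \Z\to \underline{\R}\to \underline{U(1)}\to 1$, represented by the integral $2$-cocycle obtained from lifting the logarithms $u_{\alpha\beta}/2\pi$ (or $u_{\alpha\beta}$, up to the usual normalization convention) and taking the coboundary.

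The key step is then the following: Equation \eqref{omegaequation} says precisely that $N\omega_{\alpha\beta\gamma}$ and $u_{\alpha\beta}+u_{\beta\gamma}+u_{\gamma\alpha}$ agree modulo $2\pi\Z$ (after adjusting notation), i.e. the coboundary $\delta u$ of the real cochain $(u_{\alpha\beta})$ represents $N$ times the class $\omega\in H^2(X,\Q)$ when pushed into $H^2(X,\R)$, and simultaneously it represents $c_1(L_\omega^N)$ when interpreted as an integral class via the exponential sequence. Hence I would argue that under $H^2(X,\Z)\to H^2(X,\R)$ the class $c_1(L_\omega^N)$ maps to the same real class as $N\omega$, and since $N\omega$ lies in the image of $H^2(X,\Z)\to H^2(X,\Q)$ by hypothesis and $H^2(X,\Q)\to H^2(X,\R)$ is injective, together with a mild torsion-freeness remark (or simply working at the level of the canonical integral representative $\delta(u/2\pi)$, which literally equals the Cech representative of $N\omega$) one concludes $c_1(L_\omega^N)=N\omega$ in $H^2(X,\Z)$, or at least in $H^2(X,\Q)$, which is all that is needed since $\omega$ was a rational class.

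Concretely, the cleanest route avoids any injectivity subtlety: choose the Cech representative of $N\omega$ to literally be $(Nu_{\alpha\beta}\text{-coboundary})$. Since $(\e^{iu_{\alpha\beta}})$ is a genuine $U(1)$-cocycle, $u_{\alpha\beta}+u_{\beta\gamma}+u_{\gamma\alpha}\in 2\pi\Z$ on each $U_{\alpha\beta\gamma}$, so $\frac{1}{2\pi}(u_{\alpha\beta}+u_{\beta\gamma}+u_{\gamma\alpha})$ is an integral $2$-cocycle; by definition of the connecting homomorphism in the exponential sequence this integral cocycle represents $c_1(L_\omega^N)$. On the other hand Equation \eqref{omegaequation} rearranges to $\e^{iN\omega_{\alpha\beta\gamma}} = \e^{i(u_{\alpha\beta}+u_{\beta\gamma}+u_{\gamma\alpha})}$, so $N\omega_{\alpha\beta\gamma}$ and $u_{\alpha\beta}+u_{\beta\gamma}+u_{\gamma\alpha}$ differ by an element of $2\pi\Z$ pointwise, i.e. as rational (indeed real) $2$-cocycles $N\omega$ and $\frac{1}{2\pi}\delta u$ represent the same class in $H^2(X,\R)$; intersecting with the integral lattice gives $c_1(L_\omega^N)=N\omega$ in $H^2(X,\Q)$.

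I do not anticipate a genuine obstacle here — the statement is essentially a bookkeeping identity unwinding the definitions of $L_\omega^N$, of the connecting homomorphism $H^1(X,\underline{U(1)})\to H^2(X,\Z)$, and of Equation \eqref{omegaequation}. The only point requiring a modicum of care is the normalization convention (factors of $2\pi$ versus $i$) in passing between the exponential sequence and first Chern classes, and making sure one works in $H^2(X,\Q)$ rather than $H^2(X,\Z)$ if one wishes to sidestep torsion in $H^2(X,\Z)$; but since $\omega$ is rational and the whole construction took place after clearing denominators by $N$, this is harmless.
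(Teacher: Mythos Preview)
Your proposal is correct and follows exactly the route the paper takes: the paper's entire proof is the single sentence ``This is clear from equation \eqref{omegaequation},'' and what you have written is precisely the detailed unwinding of that sentence via the exponential-sequence Bockstein. Your care about the $2\pi$ normalization and about working rationally is appropriate but not something the paper bothers to mention.
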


\begin{proof}
This is clear from equation \eqref{omegaequation}.
\end{proof}

\begin{prop}
\label{rootprop}
There is a projective line bundle $L_\omega\to P_\omega$ associated with the cochain $(u_{\alpha\beta})_{\alpha,\beta\in I}\in C^1((U_\alpha)_{\alpha\in I},\R)$ such that
\begin{enumerate}
\item $(L_\omega)^{\otimes N}$ is the lift of $L_\omega^{ N}$ to $P_\omega$.
\item The mapping $K^*(X)\ni E\mapsto E\otimes L_\omega\in K^*_{proj}(P_\omega)$ is an isomorphism.
\end{enumerate}
\end{prop}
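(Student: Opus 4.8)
The plan is to construct $L_\omega$ directly from the cochain $(u_{\alpha\beta})_{\alpha,\beta\in I}$ by specifying its transition functions over $P_\omega$, following the pattern of Proposition \ref{vbdatatoproj}, and then to verify the two assertions by a cocycle computation together with the classification results already in hand. First I would recall that $P_\omega$ is built from the $PU(N)$-cocycle $(c_{\alpha\beta}\mod U(1))_{\alpha,\beta\in I}$, so we have trivializations $f_\alpha:P_\omega|_{U_\alpha}\xrightarrow{\sim}PU(N)\times U_\alpha$. Using the canonical projective line bundle $L_N\to PU(N)$ from Proposition \ref{vbdatatoproj} and its remarks, I would set $\tilde L_\alpha:=f_\alpha^*(L_N\boxtimes(U_\alpha\times\C))\to P_\omega|_{U_\alpha}$ and glue these along the morphisms of projective line bundles induced (via the diagram in Proposition \ref{vbdatatoproj}) by $\id_{L_N}\boxtimes (\e^{\frac{i}{N}u_{\alpha\beta}})$. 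The key point is that while $(c_{\alpha\beta})$ only satisfies the weak cocycle condition with coboundary $\e^{\frac{i}{N}(u_{\alpha\beta}+u_{\beta\gamma}+u_{\gamma\alpha})}$, the scalars $\e^{\frac{i}{N}u_{\alpha\beta}}$ absorb exactly this discrepancy: the computation in the proof of Proposition \ref{vbdatatoproj} shows $\tilde G_{\alpha\beta}\tilde G_{\beta\gamma}=f_\gamma^*(\theta_{\alpha\beta\gamma}\boxtimes c_{\alpha\beta}c_{\beta\gamma})(f_\alpha^{-1})^*$ where here $\theta_{\alpha\beta\gamma}$ is the Dixmier-Douady cocycle of $P_\omega$, and by the defining relation of $(c_{\alpha\beta})$ this cocycle is $\e^{\frac{i}{N}(u_{\alpha\beta}+u_{\beta\gamma}+u_{\gamma\alpha})}$; multiplying the transition morphisms by the scalars $\e^{\frac{i}{N}u_{\alpha\beta}}$ then yields honest (strict) cocycles, defining $L_\omega\to P_\omega$.

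For item (1), I would compute $(L_\omega)^{\otimes N}$ by noting that its transition functions over $P_\omega$ are the $N$th powers $(\e^{\frac{i}{N}u_{\alpha\beta}})^N(\id_{L_N})^{\otimes N}=\e^{iu_{\alpha\beta}}\cdot(\text{transition of }L_N^{\otimes N})$. Since $L_N\to PU(N)$ has central character $1\mod N\Z$, its $N$th tensor power has trivial central character and hence descends to $PU(N)$ (indeed $L_N^{\otimes N}\cong\det$ of the tautological bundle, which is trivial on $PU(N)$ up to isomorphism, or at worst pulled back from a point); in either case $L_N^{\otimes N}$ contributes trivial transition data over the $PU(N)$-factor, so $(L_\omega)^{\otimes N}$ is the pullback along $\pi:P_\omega\to X$ of the line bundle with transition functions $(\e^{iu_{\alpha\beta}})$, which is precisely $L_\omega^N$ by definition. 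For item (2), the mapping $E\mapsto E\otimes L_\omega$ is additive and natural; since $\delta(P_\omega)=0$ (as noted at the start of the subsection, the image of $H^2(X,\underline{\R})$ dies under the Bockstein), the Azumaya bundle $\mathcal{A}(P_\omega)$ is Morita trivial, so $\iota_{P_\omega}:K^*_{proj}(P_\omega)\xrightarrow{\sim}K_*(C_0(X,\mathcal{A}(P_\omega)))\cong K^*(X)$ by Theorem \ref{projectivevstwisted}. It remains to identify the composite $K^*(X)\to K^*_{proj}(P_\omega)\xrightarrow{\iota_{P_\omega}}K^*(X)$ with an isomorphism; since tensoring by the projective line bundle $L_\omega$ implements, under $\iota_{P_\omega}$, tensoring by the corresponding Morita-equivalence line bundle on $X$ (using the multiplicativity of $\iota_P$ from Lemma \ref{productprop}), this composite is tensoring by a genuine line bundle and is therefore an isomorphism with inverse given by tensoring by its dual.

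The main obstacle I anticipate is bookkeeping the central characters and the interplay between the weak cocycle condition for $(c_{\alpha\beta})$ and the $\R$-valued (as opposed to $U(1)$-valued) nature of $(u_{\alpha\beta})$: one must be careful that $\e^{\frac{i}{N}u_{\alpha\beta}}$ is a well-defined $U(1)$-valued cochain and that its failure to be a cocycle matches the Dixmier-Douady cocycle of $P_\omega$ on the nose (not merely up to coboundary), so that the product transition morphisms genuinely satisfy the strict cocycle condition. A secondary subtlety is verifying that $L_N^{\otimes N}$ really does descend trivially to $X$ in the relevant sense — more precisely, that over $PU(N)$ it carries a canonical trivialization compatible with the left $SU(N)$-action — so that no spurious class enters in item (1); this is where Proposition \ref{elementpropproj}(4) is invoked. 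Once these are pinned down, both items follow formally from the machinery of Sections 1.3–1.4.
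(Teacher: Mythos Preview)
Your construction of $L_\omega$ is essentially the same as the paper's: both glue the local pieces $L_N\times U_\alpha$ using transition data built from $c_{\alpha\beta}$ and $\e^{\frac{i}{N}u_{\alpha\beta}}$ (the paper writes the $U(N)$-valued cocycle $c_{\alpha\beta}\e^{\frac{i}{N}u_{\alpha\beta}}$ directly, while you package the $c_{\alpha\beta}$ into the change-of-trivialization diagram of Proposition~\ref{vbdatatoproj}). Item (1) is handled the same way, and in fact the paper does not even spell it out.

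For item (2), however, the paper's argument is much more direct than yours and avoids the gap in your final step. The paper simply observes that for any projective bundle $\tilde E\to P_\omega$, the bundle $\tilde E\otimes L_\omega^*$ has central character $0\bmod N\Z$ and therefore descends to a vector bundle on $X$ by Proposition~\ref{elementpropproj}(4); this descent furnishes an explicit inverse to $E\mapsto E\otimes L_\omega$ at the level of bundles. Your route via $\delta(P_\omega)=0$, Morita triviality of $\mathcal A(P_\omega)$, and $\iota_{P_\omega}$ establishes only that $K^*_{proj}(P_\omega)$ and $K^*(X)$ are abstractly isomorphic; it does not show that \emph{this particular} map is an isomorphism. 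Your attempt to close that gap --- asserting that under $\iota_{P_\omega}$ and a chosen Morita trivialization the map becomes ``tensoring by a genuine line bundle on $X$'' --- is not justified: $\iota_{P_\omega}[L_\omega]$ is a priori only a class in $K_0(C(X,\mathcal A(P_\omega)))$, and nothing you have said forces its image under an unspecified Morita equivalence to be a line-bundle class. The fix is exactly the paper's observation: use $L_\omega^*$ directly to build the inverse, which bypasses $\iota_{P_\omega}$ and the Morita machinery entirely.
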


\begin{proof}
We define the projective line bundle $L_\omega\to P_\omega$ by gluing together the projective line bundles $U_\alpha\times L_N \to U_\alpha\times PU(N)$ along the $U(N)$-valued $2$-cocycle 
\[\left(c_{\alpha\beta} \exp\left(\frac{i}{N}u_{\alpha\beta}\right):U_{\alpha\beta}\to U(N)\right)_{\alpha,\beta\in I}.\]

That $-\otimes L_\omega:K^*(X)\to K^*_{proj}(P_\omega)$ gives an isomorphism follows from the fact that, if $\tilde{E}\to P_\omega$ is a projective bundle, the $SU(N)$-equivariant vector bundle $\tilde{E}\otimes L_\omega^*$ has central character $0\mod N\Z$ and descends to a vector bundle on $X$ by Proposition \ref{elementpropproj}. 
\end{proof}

\begin{prop}
\label{commdiagramrational}
If $\omega\in H^2(X,\Q)$ then the following diagram commutes:
\[\begin{CD}
K^*(M)@>-\otimes L_\omega>>K^*_{proj}(P_\omega) \\
@V\ch VV @VV\ch_{P_\omega} V \\
H^*_{dR}(X)@>\wedge \e^{\omega}>> H^*_{dR}(X) \\
\end{CD}.\]
\end{prop}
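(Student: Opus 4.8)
\emph{Proof proposal.} The plan is to reduce the statement to the single identity $\ch_{P_\omega}(L_\omega)=\e^{\omega}$ in $H^{even}_{dR}(X)$ and to deduce that identity from the $N$-th power relation of Proposition \ref{rootprop}. (Here I read the top left corner of the diagram as $K^*(X)$.)

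Both composites $K^*(X)\to H^*_{dR}(X)$ in the square are additive, so it suffices to check that $\ch_{P_\omega}(\pi_{P_\omega}^*E\otimes L_\omega)=\ch(E)\wedge\e^{\omega}$ for an honest vector bundle $E\to X$. I would first establish the factorization $\ch_{P_\omega}(\pi_{P_\omega}^*E\otimes L_\omega)=\ch(E)\wedge\ch_{P_\omega}(L_\omega)$ by Chern--Weil: choosing an ordinary connection on $E$ and a connection on the $\mathcal{A}(P_\omega)$-module $\iota_{P_\omega}[L_\omega]$ compatible with the Azumaya action, the induced connection on the tensor product has curvature that is block-additive in the two factors, and since the twisted Chern character is built by subtracting off the contribution of the fixed Azumaya/gerbe connection, the exponential of the curvature factors accordingly. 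This step uses nothing beyond the definition of $\ch_{P_\omega}$ together with the multiplicativity of $\iota_P$ recorded in Lemma \ref{productprop}; only the normalizing constants require attention.

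It then remains to identify $\ch_{P_\omega}(L_\omega)$. By Proposition \ref{elementpropproj}(2) the fiberwise power $(L_\omega)^{\otimes N}$ has central character $0\mod N\Z$, by Proposition \ref{elementpropproj}(4) it therefore descends to a bundle on $X$, and by Proposition \ref{rootprop}(1) that bundle is $L_\omega^N$, with $c_1(L_\omega^N)=N\omega$ by Proposition \ref{conel}. On a bundle pulled back (descended) from $X$ the twisted Chern character is the ordinary one, so $\ch_{P_\omega}((L_\omega)^{\otimes N})=\ch(L_\omega^N)=\e^{N\omega}$; combined with multiplicativity of the twisted Chern character this gives $\ch_{P_\omega}(L_\omega)^N=\e^{N\omega}$. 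Finally, $L_\omega$ is an $\mathcal{A}(P_\omega)$-line bundle, so $\ch_{P_\omega}(L_\omega)$ has exponential form $\e^{\alpha}$ for some $\alpha\in H^2_{dR}(X)$; then $N\alpha=N\omega$ forces $\alpha=\omega$ since $H^*_{dR}(X)$ is a real vector space, whence $\ch_{P_\omega}(L_\omega)=\e^{\omega}$ and the diagram commutes.

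The real obstacle is bookkeeping with the definition of $\ch_{P_\omega}$ rather than anything conceptual: one must pin down which connection on $\mathcal{A}(P_\omega)$ (equivalently, which curving of the associated gerbe) enters the definition, verify that the resulting twisted Chern character is multiplicative and restricts to the ordinary Chern character on descended bundles, and track the factor $\tfrac1N$ appearing in the cocycles $(c_{\alpha\beta}\exp(\tfrac{\mathrm i}{N}u_{\alpha\beta}))_{\alpha,\beta}$ from Subsection \ref{rootsoflinebundles}. It is precisely this factor which, via equation \eqref{omegaequation}, turns the scalar part of the curvature of $L_\omega$ into a global $2$-form representing $\omega$ (and not $N\omega$), so this is where the argument is most delicate. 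As an alternative to invoking the formal properties of $\ch_{P_\omega}$, one can compute $\ch_{P_\omega}(\pi_{P_\omega}^*E\otimes L_\omega)$ directly from a Cech--de Rham representative assembled from the explicit transition data of $L_\omega$, in which the scalar parts of the local curvatures glue, again by \eqref{omegaequation}, into $\omega$.
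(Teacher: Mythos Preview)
Your proposal is correct and follows essentially the same approach as the paper: both reduce to computing $\ch_{P_\omega}(L_\omega)$ via the $N$-th power relation $(L_\omega)^{\otimes N}=\pi_{P_\omega}^*L_\omega^N$ from Proposition~\ref{rootprop} together with $\ch[L_\omega^N]=\e^{N\omega}$ from Proposition~\ref{conel}, then conclude by multiplicativity of the Chern character. The paper's proof is a three-line version of yours; your added discussion of why the $N$-th root is unambiguous and your caveats about the definition of $\ch_{P_\omega}$ are sound but not needed beyond what the paper assumes.
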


\begin{proof}
By Proposition \ref{rootprop} $(L_\omega)^N=\pi_{P_\omega}^*L_\omega^N$ and by Proposition \ref{conel} $\ch[L_\omega^N]=\e^{N\omega}$. In particular,  
\[\ch_{P_\omega}[L_\omega]=\sqrt[N]{\e^{N\omega}}=\e^\omega.\] 
The proposition follows from the multiplicativity of the Chern character.
\end{proof}

\Large
\section{Projective $K$-homology}
\normalsize
\label{sectionprojectivekhomology}

In this section, we define the projective cycles. They can be seen as an analogue of simplicial homology in twisted $K$-homology. A projective cycle, which is a building block in this homology theory, can be seen as locally capturing the projective structure of the $PU(\He)$-space. 

To define projective cycles we need a generalization of a $PU(\infty)$-morphism (see Definition \ref{puinftymorphism}) to the infinite-dimensional setting. Assume that $\He$, $\He'$ and $\He''$ are separable Hilbert spaces and there is a fixed choice of isomorphism $\He\otimes \He''\cong \He'\otimes \He''$. This isomorphism induces an embedding $PU(\He')\to PU(\He\otimes \He'')$ continuous in the compact-open topology and smooth in the norm topology. Since our purpose is to generalize Definition \ref{puinftymorphism}, it suffices to consider the case that $\He''$ is infinite-dimensional. 

\begin{deef}
Let $\He$, $\He'$ and $\He''$ be as above. A stable morphism $P\to P'$ between a principal $PU(\He)$-bundle $P\to X$ and a principal $PU(\He')$-bundle $P'\to Y$, is a $PU(\He\otimes \He'')$-equivariant mapping
\[f:P\times_{PU(\He)}PU(\He\otimes \He'')\to P'\times_{PU(\He')} PU(\He\otimes \He'').\]
If $f$ is a homeomorphism, we say that $f$ is a stable isomorphism. If $P$, $P'$ are smooth and $f$ is a diffeomorphism we say that $f$ is a stable diffemorphism.
\end{deef}

\begin{remark}
If $P$ is a principal $PU(n)$-bundle and $P'$ is a principal $PU(k')$-bundle any $PU(\infty)$-morphism $(f,P^U,P'^U)$ induces a stable morphism $P\to P'$. This is a consequence of the fact that for any separable infinite-dimensional Hilbert space $\He''$, there is an isomorphism $\C^m\otimes \He''\cong \C^n\otimes \He''$ for any $m,n$ and the $U(\C^l\otimes \He)$-bundle $P^U\times_{U(l)} U(\C^l\otimes \He)\to P$, as well as $P'^U\times_{U(l')} U(\C^{l'}\otimes \He)\to P'$, form trivializable principal $U(\C^l\otimes \He'')$-bundles by Kuiper's theorem.
\end{remark}

We note the following proposition which is a direct consequence of the definition of stable morphism.

\begin{prop}
\label{hompropstab}
A stable morphism $\phi:P\to P'$ of principal $PU(\He)$-, respectively $PU(\He)$-bundles lifts a mapping $\phi_0:P/PU(\He)\to P'/PU(\He')$ such that $\phi_0^*\delta(P')=\delta(P)$ in $H^3(P/PU(\He),\Z)$ and an explicit homotopy, unique up to homotopy, between $\phi_0^*\delta(P')$ and $\delta(P)$ can be constructed from $\phi$.
\end{prop}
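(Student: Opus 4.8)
The plan is to unwind the definitions and track the Dixmier--Douady class through the stabilization construction. First I would observe that a stable morphism $\phi:P\to P'$ is by definition a $PU(\He\otimes\He'')$-equivariant map between the stabilized bundles $P\times_{PU(\He)}PU(\He\otimes\He'')\to X$ and $P'\times_{PU(\He')}PU(\He\otimes\He'')\to Y$. Such an equivariant map of principal $PU(\He\otimes\He'')$-bundles automatically descends to a continuous map $\phi_0:X\to Y$ on base spaces, and exhibits an isomorphism of principal $PU(\He\otimes\He'')$-bundles $P\times_{PU(\He)}PU(\He\otimes\He'')\cong \phi_0^*\bigl(P'\times_{PU(\He')}PU(\He\otimes\He'')\bigr)$; this is the standard fact that an equivariant bundle map over an arbitrary base map is the same data as a bundle map over the pullback. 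Here I am identifying $X$ with $P/PU(\He)$ and $Y$ with $P'/PU(\He')$, as in the statement.

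Next I would use that stabilization does not change the Dixmier--Douady class: the fixed isomorphism $\He\otimes\He''\cong\He'\otimes\He''$ and the induced inclusions $PU(\He)\hookrightarrow PU(\He\otimes\He'')$, $PU(\He')\hookrightarrow PU(\He'\otimes\He'')$ act as the identity on the relevant $H^3$-classifying data, exactly as in the finite-dimensional discussion in Remark \ref{ddremark} and the Proposition following Theorem \ref{projectivevstwisted} (the Dixmier--Douady invariant is insensitive to $PU(\infty)$-isomorphism, and the analogous statement holds for $PU(\He)$-bundles; see the reference to \cite[Theorem $9.13$]{cumero}). Hence $\delta(P\times_{PU(\He)}PU(\He\otimes\He''))=\delta(P)$ and $\delta(P'\times_{PU(\He')}PU(\He\otimes\He''))=\delta(P')$, both regarded in $H^3$ of the appropriate base. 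Combining this with the bundle isomorphism from the previous step and naturality of the Dixmier--Douady class under pullback yields $\phi_0^*\delta(P')=\delta(P)$ in $H^3(P/PU(\He),\Z)$.

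For the refined statement about an explicit homotopy: the isomorphism of $PU(\He\otimes\He'')$-bundles $P\times_{PU(\He)}PU(\He\otimes\He'')\cong\phi_0^*\bigl(P'\times_{PU(\He')}PU(\He\otimes\He'')\bigr)$, together with a choice of classifying map $X\to BPU(\He\otimes\He'')\simeq K(\Z,3)$, produces not merely an equality of cohomology classes but an actual homotopy of maps $X\to K(\Z,3)$, and any two such homotopies arising from the data of $\phi$ differ by a homotopy of homotopies because $PU(\He\otimes\He'')$ has contractible unitary group (Kuiper) so that the space of such identifications is connected. I expect the main obstacle to be purely bookkeeping: making precise the sense in which the homotopy is ``unique up to homotopy'' and ``constructed from $\phi$'', i.e. phrasing the naturality at the level of classifying spaces rather than cohomology, and checking that the two independent stabilizations (on the source and on the target, via the fixed isomorphism $\He\otimes\He''\cong\He'\otimes\He''$) are compatible. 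Once the classifying-space formulation is set up, the homotopy is simply the track of the bundle isomorphism under the classifying map, and its uniqueness up to homotopy follows from connectedness of the relevant mapping space.
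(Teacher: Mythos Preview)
Your proposal is correct and is precisely the kind of unwinding the paper has in mind: the paper does not actually give a proof of this proposition, stating only that it ``is a direct consequence of the definition of stable morphism''. Your argument---descending the equivariant map to the base, recognizing the resulting bundle isomorphism over $\phi_0$, invoking invariance of the Dixmier--Douady class under stabilization, and reading off the homotopy from the classifying-space picture---is exactly the intended content of that remark, spelled out in full.
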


\begin{deef}[Projective cycles]
\label{projectivegeometriccycles}
Let $P$ be a principal $PU(\He)$-bundle or a $PU(n)$-space. A projective cycle $(M,E,\phi)$ over $P$ consists of a closed spin$^c$ manifold $M$ with a free smooth spin$^c$-preserving $PU(n)$-action, a projective bundle $E\to M$ and a stable morphism $\phi:M\to P$. A triple of the form $(M,[E_1]-[E_2],\phi)$, with $[E_1]-[E_2]\in K^0_{proj}(M)$ and $(M,E_1,\phi)$ and $(M,E_2,\phi)$ being projective cycles, is called a projective cycle with $K$-theory data.

A triple that satisfies all the conditions on a projective cycle (with $K$-theory data) except that the manifold has a boundary, is called a projective cycle with boundary (and $K$-theory data).
\end{deef}

We sometimes use the terminology projective cycle with bundle data to distinguish a projective cycle from a projective cycle with $K$-theory data. 

\begin{remark}
Let us emphasize the fact that the manifold $M$ in a projective cycle is finite-dimensional. Even though the rank of a $PU(n)$-bundle is irrelevant for our purposes, we shall try to distinguish the notation between the rank of the fixed bundle $P$ and that of the cycles $M$, they will be denoted by $n$ and $k$ respectively. 
\end{remark}

\begin{deef}
An isomorphism of two projective cycles $(M,E,\phi)$ and $(M',E',\phi')$ with bundle data is a pair $(f_0,f_1)$ where $f_0$ is a $PU(\infty)$-diffeomorphism $M\to M'$ (recall Definition \ref{puinftymorphism}) that preserves spin$^c$-structures, $\phi=\phi'\circ f_0$ (composition in the sense of stable morphisms) and $f_1$ is an isomorphism $f_{0,\infty}^*E'\to E$. 

Similarly, an isomorphism of two projective cycles with $K$-theory data $(M,[E_1]-[E_2],\phi)$ and $(M',[E_1']-[E_2'],\phi')$ is a $PU(\infty)$-diffeomorphism $f_0:M\to M'$ such that $f_{0,\infty}^*([E_1']-[E_2'])=[E_1]-[E_2]$ in $K^0_{proj}(M)$. 
\end{deef}

\begin{remark}
Often when referring to a projective cycle (with $K$-theory data) we refer to an isomorphism class of a projective cycle (with $K$-theory data). The projective cycles (with $K$-theory data) have the same natural operations as geometric cycles (see \cite{baumdtva}), e.g. disjoint union. 
\end{remark}

We now turn to the equivalence relation on the projective cycles. This goes along the lines of \cite[Section 11]{baumdtva}. The equivalence relation is generated by three elementary steps: disjoint union/direct sum, bordism and vector bundle modification. We first recall the first two in the context of projective cycles.

\begin{enumerate}
\item The disjoint union/direct sum-relation is a relation on the projective cycles with bundle data identifying $(M,E,\phi)\dot{\cup}(M,E',\phi)$ with $(M,E\oplus E',\phi)$. 
\item Two cycles $(M,E,\phi)$ and $(M',E',\phi')$ are said to be bordant, written 
$$(M,E,\phi)\sim_{bor}(M',E',\phi'),$$ 
if there is a projective cycle with boundary $(W,F,f)$, also referred to as a projective bordism, such that 
$$(M,E,\phi)\dot{\cup}(-M',E',\phi')=(\partial W,F|_{\partial W},f|_{\partial W}).$$
Here $-M'$ denotes $M'$ equipped with its opposite spin$^c$-structure. One defines bordism of projective cycles with $K$-theory data in a similar way. 
\end{enumerate}

\begin{prop}
\label{bordvbvskt}
Both the set of equivalence classes of projective cycles with bundle data, under the relation generated by bordism and disjoint union/direct sum, and the set of equivalence classes of projective cycles with $K$-theory data, under the relation generated by bordism and disjoint union/direct sum, form abelian groups under disjoint union. Furthermore, these abelian groups are isomorphic via the mapping defined on cycles by $(M,E,\phi)\mapsto (M,[E],\phi)$.
\end{prop}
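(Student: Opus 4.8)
The plan is to verify the group structure and then construct the comparison isomorphism explicitly, checking that it is well defined and bijective.

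First I would establish that each set is an abelian semigroup under disjoint union with unit the empty cycle. Associativity and commutativity of $\dot\cup$ are formal. The key point is that inverses exist: I would show $(M,E,\phi)\dot\cup(-M,E,\phi)$ is null-bordant. This is the standard Baum--Douglas argument adapted to the projective setting: the cylinder $W:=M\times[0,1]$ carries an induced free smooth $PU(n)$-action, is spin$^c$ with $\partial W = M\dot\cup(-M)$ (the sign conventions matching because of the opposite orientation on the second copy), the bundle $F:=E\times[0,1]$ is a projective bundle on $W$ (pull back along the projection, which is $PU(n)$-equivariant), and the stable morphism $\phi$ extends to $W$ by composing with the projection $W\to M$. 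Hence $(W,F,\phi\circ\mathrm{pr})$ is a projective bordism witnessing $(M,E,\phi)\dot\cup(-M,E,\phi)\sim_{bor}\emptyset$, so $-[(M,E,\phi)] = [(-M,E,\phi)]$. The same cylinder argument works verbatim for cycles with $K$-theory data (replacing $E$ by $[E_1]-[E_2]$), giving the second abelian group. I should also note that the disjoint union/direct sum relation is compatible with this: it is respected by bordism since $(M,E,\phi)\dot\cup(M,E',\phi)$ and $(M,E\oplus E',\phi)$ are already identified in the bundle-data group by fiat, and this does not affect inverses.

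Next I would analyze the map $\Phi\colon (M,E,\phi)\mapsto (M,[E],\phi)$. It is visibly a semigroup homomorphism (disjoint union goes to disjoint union, and $[E\oplus E'] = [E]+[E']$ in $K^0_{proj}(M)$ matches the disjoint union/direct sum relation on both sides), hence a group homomorphism once both sides are known to be groups. For surjectivity, given a projective cycle with $K$-theory data $(M,[E_1]-[E_2],\phi)$, I would write it as a difference of images: using that $-[(M,E_2,\phi)] = [(-M,E_2,\phi)]$ from the previous paragraph, one has $[(M,[E_1]-[E_2],\phi)] = \Phi[(M,E_1,\phi)] + [(-M,E_2,\phi)]$ in the $K$-theory-data group; but the right-hand side also equals $\Phi([(M,E_1,\phi)]\dot\cup[(-M,E_2,\phi)])$ once one checks $[(-M,E_2,\phi)] = \Phi[(-M,E_2,\phi)]$, which is immediate. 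So $\Phi$ is surjective.

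The main obstacle is injectivity, i.e. showing that if $(M,[E],\phi)$ is null-bordant as a $K$-theory-data cycle then $(M,E,\phi)$ is null-bordant (modulo disjoint union/direct sum) as a bundle-data cycle. The standard device, again following \cite[Section 11]{baumdtva}, is: a bordism $(W,F,f)$ of $K$-theory-data cycles carries $F = [F_1]-[F_2]\in K^0_{proj}(W)$; restricting to $\partial W = M$ gives $[F_1|_M]-[F_2|_M] = [E]$ in $K^0_{proj}(M)$, so after adding a degenerate projective bundle (which is allowed by the definition of $K^0_{proj}$ via elliptic complexes and degenerate equivalence) and possibly stabilizing, we may arrange $F_1|_M\cong E\oplus G$ and $F_2|_M\cong G$ for some projective bundle $G\to M$. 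Then the bundle-data bordism is built from $W$ together with the cylinder on $(M,G,\phi)$ and a collar argument: one glues $(W,F_1,f)$ to $(-M\times[0,1], F_2|_M\times[0,1], \phi\circ\mathrm{pr})$ along the part of the boundary where the $K$-theory classes agree, obtaining a bundle-data bordism whose boundary is $(M, E\oplus G, \phi)\dot\cup(-M,G,\phi)$, which equals $(M,E,\phi)\dot\cup(M,G,\phi)\dot\cup(-M,G,\phi)$ up to the disjoint union/direct sum relation; since the last two pieces cancel, $(M,E,\phi)$ is null-bordant. The subtle points I would need to check carefully are that degenerate projective bundles (of the form $(E,E,\id_E)$) can be realized by actual projective bundles $G$ on all of $W$ compatibly with the boundary — here one uses that a projective bundle on $\partial W$ extends to a neighborhood and then, after a homotopy, one may assume it is a restriction of a projective bundle on $W$ possibly after adding a trivial summand via Proposition \ref{trivialbundle} — and that all the gluings respect the free $PU(n)$-action, the spin$^c$-structures, and the stable morphisms to $P$, which follows because every operation used (cylinders, collars, pullbacks along equivariant projections) is manifestly equivariant and natural.
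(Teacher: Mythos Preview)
Your treatment of the group structure is exactly the paper's: the cylinder $(M\times[0,1],\pi_M^*E,\phi\circ\pi_M)$ gives the inverse. For the isomorphism, however, you take a harder road than the paper. The paper does not prove injectivity of $\Phi$; instead it simply writes down an inverse map
\[
\Psi:(M,[E_1]-[E_2],\phi)\longmapsto (M,E_1,\phi)\,\dot\cup\,(-M,E_2,\phi)
\]
and observes (implicitly) that $\Psi$ is well defined modulo bordism and disjoint union/direct sum, and that $\Phi\circ\Psi$ and $\Psi\circ\Phi$ are the identity. This avoids entirely the bundle-extension and gluing issues you flag as ``subtle points''.

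Your injectivity argument, by contrast, has a confused step. You propose to glue $(W,F_1,f)$ to a cylinder carrying $F_2|_M$ ``along the part of the boundary where the $K$-theory classes agree'', but $F_1|_M$ and $F_2|_M$ are generally non-isomorphic bundles, so there is nothing to glue along. What you actually want is simpler: the two bordisms $(W,F_1,f)$ and $(-W,F_2,f)$ separately show that $(M,F_1|_M,\phi)\dot\cup(-M,F_2|_M,\phi)$ is null-bordant in the bundle-data group, and then the relation $[F_1|_M]-[F_2|_M]=[E]$ in $K^0_{proj}(M)$ together with the disjoint union/direct sum relation and stabilization finishes the job. But once you phrase it this way you have essentially rediscovered $\Psi$, and it is cleaner to state it as an inverse map from the outset. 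Your approach buys nothing extra; the paper's is shorter and sidesteps the bundle-extension problem you were worried about.
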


\begin{proof}
Since $(M,E,\phi)\dot{\cup}(-M,E,\phi)=\partial (M\times [0,1],\pi_M^*E,\phi\circ \pi_M)$ the first statements are clear. As for the second statement, it follows from the fact that the mapping  $(M,E,\phi)\mapsto (M,[E],\phi)$ admits an inverse mapping that is defined by $(M,[E_1]-[E_2],\phi)\mapsto (M,E_1,\phi)\dot{\cup}(-M,E_2,\phi)$ -- a mapping that is well defined modulo bordism and the disjoint union/direct sum-relation. 
\end{proof}

The final ingredient in the relation defining projective $K$-homology is vector bundle modification. Recall that the two-out-of-three lemma for spin$^c$-structures implies that if $V_0\to Y$ is an even-dimensional Riemannian spin$^c$-vector bundle over a manifold $Y$ and $Y^{V_0}$ denotes the sphere bundle $S(V_0\oplus 1_\R)$,  a spin$^c$-structure on $Y$ induces one on $Y^{V_0}$. If $V_0$ has a spin$^c$-structure, there is a fiberwise Bott bundle $Q_{V_0}\to Y^{V_0}$ and a Bott class $[\beta_{V_0}]=[Q_V]-\mathrm{rk}\, Q_V[1]\in K^0(Y^{V_0})$, for details see for instance \cite[Chapter 2.5]{Rav}. The case of interest to us is $Y=PU(k)\backslash M$. Let $\pi_M:M\to PU(k)\backslash M$ denote the projection and define $V:=\pi_M^*V_0$. The projection mapping $\pi_{Y^{V_0}}:M^V\to Y^{V_0}$ is a principal $PU(k)$-bundle that respects the spin$^c$-structures. Let $\pi_{M^V}:M^V\to M$ denote the projection mapping.

\begin{deef}[Vector bundle modification]
\label{vebemod}
Let $M$ be a principal $PU(k)$-bundle and $V=\pi_M^*V_0$ where $V_0\to PU(k)\backslash M$ is an even-dimensional spin$^c$-vector bundle. If $(M,E,\phi)$ is a projective cycle with bundle data for $P$, then the projective cycle with bundle data vector bundle modified by $V$ from $(M,E,\phi)$ is 
\[(M,E,\phi)^V:=(M^V,E^V, \phi\circ \pi_{M^V})\quad\mbox{where}\quad E^V:=\pi_{M^V}^*E\otimes\pi_{Y^{V_0}}^*Q_{V_0}.\]
If $(M,[E_1]-[E_2],\phi)$ is a projective cycle with $K$-theory data for $P$, then the projective cycle with $K$-theory data vector bundle modified by $V$ from $(M,[E_1]-[E_2],\phi)$ is 
\[(M,[E_1]-[E_2],\phi)^{[V]}:=\left(M^V,\pi_{M^V}^*\left([E_1]-[E_2]\right)\otimes\pi_{Y^{V_0}}^*[\beta_{V_0}], \phi\circ \pi_{M^V}\right).\]
\end{deef}

\begin{prop}
In the notation of Definition \ref{vebemod}, there is a bordism of projective cycles with $K$-theory data
$$(M^V,[E^V],\phi\circ \pi_{M^V})\sim_{bor} (M,[E],\phi)^{[V]}.$$
\end{prop}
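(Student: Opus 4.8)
The plan is to compare the two cycles $(M^V, [E^V], \phi\circ\pi_{M^V})$ and $(M,[E],\phi)^{[V]} = (M^V, \pi_{M^V}^*[E]\otimes\pi_{Y^{V_0}}^*[\beta_{V_0}], \phi\circ\pi_{M^V})$ directly at the level of $K$-theory data, since they share the same underlying manifold $M^V$ and the same stable morphism $\phi\circ\pi_{M^V}$. The definitions give $E^V = \pi_{M^V}^*E\otimes\pi_{Y^{V_0}}^*Q_{V_0}$, whereas the $K$-theory data version uses $\pi_{M^V}^*[E]\otimes\pi_{Y^{V_0}}^*[\beta_{V_0}]$ with $[\beta_{V_0}] = [Q_{V_0}] - \mathrm{rk}\,Q_{V_0}\cdot[1]$. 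Hence the difference of the two $K$-theory classes on $M^V$ is exactly
\[
[E^V] - \left(\pi_{M^V}^*[E]\otimes\pi_{Y^{V_0}}^*[\beta_{V_0}]\right) = \mathrm{rk}\,Q_{V_0}\cdot\pi_{M^V}^*[E]\in K^0_{proj}(M^V),
\]
using biadditivity of the tensor product (Lemma \ref{productprop}) and the fact that tensoring a projective bundle with the trivial line bundle $[1]$ pulled back from $Y^{V_0}$ gives the identity. So it suffices to exhibit a bordism killing the class $r\cdot\pi_{M^V}^*[E]$, where $r = \mathrm{rk}\,Q_{V_0}$.

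The key step is a standard trick from Baum-Douglas: the sphere bundle $M^V = S(V\oplus\mathbbm{1}_\R)$ bounds the disk bundle $B(V\oplus\mathbbm{1}_\R) \to M$, which carries a natural spin$^c$-structure and free $PU(k)$-action compatible with all the data, and on which $PU(k)$ acts as a $PU(k)$-bundle over $B(V_0\oplus\mathbbm{1}_\R)\to PU(k)\backslash M$. Over this disk bundle one pulls back $E$ and the map $\phi\circ\pi_{M^V}\circ(\text{projection})$; restricting to the boundary recovers $(M^V, r\cdot\pi_{M^V}^*[E], \phi\circ\pi_{M^V})$ with multiplicity. More precisely, first I would use the disjoint-union/direct-sum relation to rewrite $r\cdot\pi_{M^V}^*[E]$ as the restriction to $\partial B(V\oplus\mathbbm{1}_\R)$ of the class $\pi^*([E]\otimes\mathbbm{C}^r)$ on the disk bundle — but since this restriction is nonzero, one instead uses the well-known fact that a cycle whose manifold bounds and whose bundle extends over the bounding manifold is null-bordant only up to the correction coming from how the extended bundle meets the boundary. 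The cleanest route is to invoke the standard lemma (already implicit in the construction of $[\beta_{V_0}]$) that $r\cdot\pi_{M^V}^*[E]$ and $\pi_{M^V}^*[E]\otimes\pi_{Y^{V_0}}^*[Q_{V_0}]$ differ precisely by the Bott correction, i.e. that $\pi_{Y^{V_0}}^*[Q_{V_0}]$ restricted from the disk bundle realizes the null-bordism of $r\cdot[1] - [\beta_{V_0}]$; concretely, $([Q_{V_0}] - r[1])$ is by construction the boundary restriction of a class on $B(V_0\oplus\mathbbm{1}_\R)$, namely the Thom class, so tensoring with $\pi_{M^V}^*E$ and pushing through the equivariant disk bundle $B(V\oplus\mathbbm{1}_\R)$ gives the required projective bordism of cycles with $K$-theory data.

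The main obstacle I expect is bookkeeping the equivariance and spin$^c$-structures: one must verify that $B(V\oplus\mathbbm{1}_\R)$ with its $PU(k)$-action genuinely defines a projective bordism in the sense of Definition \ref{projectivegeometriccycles} — that the $PU(k)$-action is free (it is, since it is free on $M$ and the action on fibers of $V = \pi_M^*V_0$ is trivial), that the induced spin$^c$-structure on the disk bundle restricts to the given one on $M^V$ (two-out-of-three, exactly as in the scalar case), and that the projective bundle $\pi^*E\otimes(\text{Thom class data})$ over the disk bundle is genuinely a projective bundle with the right central character (which follows from Proposition \ref{elementpropproj}(1)-(2), since the Thom/Bott class is pulled back from the base $PU(k)\backslash$-quotient and hence has central character $0$). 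Once these compatibilities are checked the bordism is immediate, so the content is entirely in confirming that the classical Baum-Douglas argument survives the addition of the free $PU(k)$-action and the projective twisting — which it does because every auxiliary bundle in sight ($Q_{V_0}$, the Thom class, the disk bundle) descends to or is pulled back from the $PU(k)$-quotient.
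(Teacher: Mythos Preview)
Your approach is the paper's: compute the difference of the two $K$-theory classes on $M^V$ as $r\cdot\pi_{M^V}^*[E]$ with $r=\mathrm{rk}\,Q_{V_0}$, then kill the corresponding cycle using the closed ball bundle. But in your second paragraph you talk yourself out of the correct, simple bordism and detour into Thom-class reasoning that is both unnecessary and incorrect.

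The point you nearly state and then abandon is this: the class $r\cdot\pi_{M^V}^*[E]$ on $M^V=\partial\bar B(V\oplus 1_\R)$ is the restriction of $r\cdot\pi_{M^B}^*[E]$ on $M^B:=\bar B(V\oplus 1_\R)$, simply because both are pulled back from $E\to M$ along the respective projections, and $\pi_{M^B}|_{M^V}=\pi_{M^V}$. Likewise $\phi\circ\pi_{M^V}$ is the restriction of $\phi\circ\pi_{M^B}$. Hence
\[
(M^V,\, r\cdot\pi_{M^V}^*[E],\, \phi\circ\pi_{M^V}) \;=\; \partial\bigl(M^B,\, r\cdot\pi_{M^B}^*[E],\, \phi\circ\pi_{M^B}\bigr)
\]
is null-bordant. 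That is the entire content of the paper's proof; your compatibility checks in the last paragraph (free $PU(k)$-action, spin$^c$-structure, central character of the pulled-back data) are the only remaining verifications, and they are routine for exactly the reasons you give.

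Your hesitation ``but since this restriction is nonzero'' reflects a misreading of null-bordism: a cycle $(N,x,\psi)$ is null-bordant if it equals $\partial(W,\tilde x,\tilde\psi)$ for some cycle with boundary, meaning the manifold bounds \emph{and} the $K$-theory class and map extend over the bounding manifold. It does not require $x=0$. Your subsequent Thom-class suggestion is actually backwards: $[Q_{V_0}]-r[1]=[\beta_{V_0}]$ does \emph{not} extend as a $K^0$-class over the closed ball bundle (if it did, the Bott class would be trivial); the Thom class lives in compactly supported $K$-theory of the open disk and is not a ``restriction to the boundary'' in the sense relevant for bordism. It is precisely the trivial summand $r[1]$ that extends, and that is all you need.
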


\begin{proof}
Define $M^B:=\bar{B}(V\oplus 1_\R)$, the closed ball bundle, and let $\pi_{M^B}:M^B\to M$ denote the projection. Since $[\beta_{V_0}]=[Q_V]-\mathrm{rk}\, Q_V[1]$, the Proposition follows from that 
$$(M,[E],\phi)^{[V]}=(M^V,[E^V],\phi\circ \pi_{M^V})\dot{\cup} \left(M^V,\mathrm{rk}\, Q_V[\pi_{M^V}^*E], \phi\circ \pi_{M^V}\right)$$
and the latter is the boundary of the projective cycle with boundary 
$$(M^B,\mathrm{rk}\, Q_V[\pi_{M^B}^*E], \phi\circ \pi_{M^B}).$$
\end{proof}

\begin{deef}
\label{theprojectivekhomology}
We define $K_*^{proj}(P)$ as the group of isomorphism class of projective cycles modulo the equivalence relation generated by disjoint union/direct sum, bordism and vector bundle modification. 
\end{deef}

The set $K_*^{proj}(P)$ is in fact an abelian group under disjoint union; the inverse of $(M,E,\phi)$ is by the proof of Proposition \ref{bordvbvskt} given by $-(M,E,\phi):=(-M,E,\phi)$. The group $K_*^{proj}(P)$ is a $\Z/2\Z$-graded group where the degree of a cycle $(M,E,\phi)$ is the dimension modulo $2$ of (the connected components of) $PU(k)\backslash M$.

\begin{remark}
\label{borddefkproj}
It follows from Proposition \ref{bordvbvskt}, and an argument identical to that proving \cite[Proposition $4.3.2$]{Rav}, that $K_*^{proj}(P)$ coincides as an abelian $\Z/2\Z$-graded group with that defined from the projective cycles with $K$-theory data modulo the relation generated by bordism and vector bundle modification. 
\end{remark}

\subsection{The projective analytic assembly mapping}
\label{sectionassemblymapping}

The projective $K$-homology groups are related to more analytically defined groups; twisted $K$-homology groups. For a principal $PU(\He)$-bundle $P\to X$, recall the notation $\mathcal{A}(P):=\Ko(\He)\times_{PU(\He)}P$. We will use the notation
$$K_*^{an}(P):=K^*(C_0(X,\mathcal{A}(P)))=KK_*(C_0(X,\mathcal{A}(P)),\C).$$
The aim of this subsection is to construct an analytic assembly mapping $\mu:K_*^{proj}(P)\to K_*^{an}(P)$ generalizing the classical analytic assembly mapping in geometric $K$-homology, see for instance \cite[Section 18]{baumdtva} or \cite[Definition 3.3]{boosw}.

With a spin$^c$-structure on a closed manifold $Y$ there is an associated fundamental class $[Y]\in K_{\dim Y}^{an}(Y)=K^{\dim Y}(C(Y))$. The fundamental class determines an isomorphism $K^{*}(Y)\xrightarrow{\sim} K_{*+\dim Y}^{an}(Y)$, $x\mapsto [Y]\cap x$. In fact, for any principal $PU(\He)$-bundle $P\to Y$, the fundamental class induces an isomorphism $K_{*}(C(Y,\mathcal{A}(P)))\xrightarrow{\sim} K_{*+\dim Y}^{an}(P)$, $x\mapsto [Y]\cap x$. For details, see \cite{ecemhy}.

Let $(M,E,\phi)$ be a projective cycle for a principal $PU(\He)$-bundle $P\to X$. Since $M$ has a spin$^c$-preserving $PU(k)$-action, we can consider the fundamental class $[PU(k)\backslash M]\in K^*(C(PU(k)\backslash M))$ and define the twisted cap product 
$$[PU(k)\backslash M]\cap\iota_M[E]\in K^*(C(PU(k)\backslash M,\mathcal{A}(M))).$$ 
By definition, the stably equivariant mapping $\phi$ does, up to a stabilization by the endomorphism bundle of a vector bundle specified by $\phi$, satisfy that $\phi^*\mathcal{A}(P)=\mathcal{A}(M)$. 

\begin{deef}
We define the projective analytic assembly of the projective cycle $(M,E,\phi)$ as the class 
\[\mu(M,E,\phi):=\phi_{*}([PU(k)\backslash M]\cap\iota_M[E])\in K_*^{an}(P)=K^*(C_0(X,\mathcal{A}(P))).\] 
\end{deef}

It is clear that $\mu$ is well defined as a mapping from projective cycles to $K_*^{an}(P)$. Observe that if $P\to X$ is smooth and oriented, then by definition $\mu(M,E,\phi)=[X]\cap \phi_{!}[E]$. 

\begin{prop}
The mapping $\mu$ induces a well defined and natural mapping 
\[\mu:K_*^{proj}(P)\to K_*^{an}(P).\]
\end{prop}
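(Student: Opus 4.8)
The plan is to show that $\mu$ respects the three elementary relations generating the equivalence on projective cycles: disjoint union/direct sum, bordism, and vector bundle modification. Well-definedness as a map out of $K_*^{proj}(P)$ then follows, and naturality in $P$ will follow from the functoriality of each ingredient (the cap product, $\iota_M$, and the push-forward $\phi_*$) under stable morphisms. Since $\mu$ is already defined on cycles, it suffices to check each relation separately.

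\emph{Disjoint union/direct sum.} This is the easiest step. For $(M,E,\phi)\dot\cup(M,E',\phi)$ one uses that $\iota_M$ is additive, that the fundamental class of $PU(k)\backslash M$ splits over connected components, and that $\phi_*$ is additive; hence $\mu\big((M,E,\phi)\dot\cup(M,E',\phi)\big)=\mu(M,E,\phi)+\mu(M,E',\phi)=\phi_*\big([PU(k)\backslash M]\cap\iota_M[E\oplus E']\big)=\mu(M,E\oplus E',\phi)$, using $\iota_M[E\oplus E']=\iota_M[E]+\iota_M[E']$.

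\emph{Bordism.} Suppose $(W,F,f)$ is a projective cycle with boundary and $\partial W=M\,\dot\cup\,(-M')$ with the induced data. The key point is the standard boundary argument: the composition $K^*(C(PU(k)\backslash W,\mathcal{A}(W)))\to K^*(C_0(X,\mathcal{A}(P)))$ obtained by capping with the (relative) fundamental class of the manifold-with-boundary $PU(k)\backslash W$ and pushing forward by $f$ kills the restriction-to-boundary map. Concretely, $\iota_W[F]$ restricts on the boundary to $\iota_M[E]\oplus(-\iota_{M'}[E'])$ (the sign coming from the opposite spin$^c$-structure on $-M'$), and the relative fundamental class $[PU(k)\backslash W,\partial]$ pairs with $\partial^*$ to give zero in the long exact sequence of the pair; applying $f_*$ and using $f|_{\partial W}=\phi\,\dot\cup\,\phi'$ yields $\mu(M,E,\phi)-\mu(M',E',\phi')=0$. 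This is really the twisted analogue of the argument in \cite[Section 18]{baumdtva}, and the main thing to verify is that the twist along $W$ is compatible: $f^*\mathcal{A}(P)=\mathcal{A}(W)$ up to the stabilization specified by $f$, which restricts to the corresponding identifications on the boundary by Proposition \ref{hompropstab}.

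\emph{Vector bundle modification.} By Remark \ref{borddefkproj} it is enough to work with cycles with $K$-theory data, and by the preceding bordism step it suffices to treat the cycle $(M,[E_1]-[E_2],\phi)^{[V]}=(M^V,\pi_{M^V}^*([E_1]-[E_2])\otimes\pi_{Y^{V_0}}^*[\beta_{V_0}],\phi\circ\pi_{M^V})$. Here one uses the classical fact that, for the sphere-bundle projection $q:=\pi_{Y^{V_0}}:Y^{V_0}\to Y$ with $Y=PU(k)\backslash M$, one has $q_!(x\cap q^*[\beta_{V_0}])=x$ for all $x\in K^*(C(Y))$ — i.e. the Bott class is a unit for the fiberwise push-forward, by the Thom isomorphism — and more generally its twisted version over $\mathcal{A}(M)$. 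Combined with $\iota_{M^V}(\pi_{M^V}^*[E]\otimes\pi_{Y^{V_0}}^*[\beta_{V_0}])=q^*\iota_M[E]\cup \pi_{Y^{V_0}}^*[\beta_{V_0}]$ (multiplicativity of $\iota$, Lemma \ref{productprop}) and the projection formula for $q_!$, one gets $q_!\big([Y^{V_0}]\cap\iota_{M^V}[E^V]\big)=[Y]\cap\iota_M[E]$; applying $\phi_*$ and using $(\phi\circ\pi_{M^V})_*=\phi_*\circ q_*$ on the base gives $\mu\big((M,[E],\phi)^{[V]}\big)=\mu(M,[E],\phi)$.

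\emph{Naturality.} Given a stable morphism $g:P\to P'$, naturality of $\mu$ amounts to checking $g_*\circ\mu=\mu\circ g_\sharp$ where $g_\sharp$ is postcomposition of the defining map $\phi$ with $g$. This is immediate from the definition of $\mu$ once one knows that the push-forward $\phi_*$ and the identification of twists are natural under composition of stable morphisms, which follows from Proposition \ref{hompropstab} and the functoriality built into Subsection \ref{pushforsubsec}.

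I expect the bordism step to be the main obstacle, specifically the careful bookkeeping of the twist and the spin$^c$-orientation (the relative fundamental class of $PU(k)\backslash W$ and the sign on the $-M'$ component), together with verifying that the stable-morphism data $f$ restricts compatibly to $\phi$ and $\phi'$ on the two boundary pieces so that the two push-forwards genuinely assemble into a single push-forward from $W$. Everything else is a matter of assembling multiplicativity of $\iota$, the projection formula, and the Thom-isomorphism property of the Bott class.
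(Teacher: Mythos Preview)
Your proposal is correct and takes essentially the same approach as the paper, checking the three generating relations in turn. The one difference worth noting is that for vector bundle modification the paper writes the $K$-theory datum of the modified cycle as $s_!([E_1]-[E_2])$ for the south-pole section $s:M\to M^V$ and then uses $\pi_{M^V}\circ s=\id_M$ together with naturality of push-forwards (Lemma \ref{pullbackpushforward}) directly, which packages your projection-formula/Bott-class computation into a single line; the bordism step is likewise phrased more compactly via $\partial[W^\circ/PU(k)]=[\partial W/PU(k)]$ and $i_*\circ\partial=0$, which is exactly the long-exact-sequence argument you describe.
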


\begin{proof}
Following Remark \ref{borddefkproj}, we will prove that $\mu$ induces a well defined mapping on projective cycles with $K$-theory data respecting vector bundle modification and bordism. If we vector bundle modify a projective cycle $(M,[E_1]-[E_2],f)$ with $K$-theory data along $V$, the modified cycle can be realized as $(M^V,s_!([E_1]-[E_2]),f\circ \pi)$ where $s:M\to M^V$ denotes the south pole mapping. But $\pi\circ s=\id_M$ and naturality of push-forwards (see Lemma \ref{pullbackpushforward}) implies that
\begin{align*}
\mu(M^V,s_!([E_1]-[E_2]),f\circ \pi)&=f_{*}\pi_*([M^V/PU(k)]\cap\iota_{M^V}s_!([E_1]-[E_2]))\\
&=f_{*}\pi_*s_*([PU(k)\backslash M]\cap\iota_M([E_1]-[E_2]))\\
&=\mu(M,[E_1]-[E_2],f).
\end{align*}

We let $(W,x,\phi)$ be a projective cycle with boundary and $K$-theory data $x\in K^0_{proj}(W)$. Further, we let $i:\partial W\hookrightarrow W$ denote the embedding and $\partial:K_*(W^o/PU(k))\to K_{*+1}(\partial W/PU(k))$ the boundary mapping. It is well-known that $\partial[W^o/PU(k)]=[\partial W/PU(k)]$, see for instance \cite[ Theorem 4.7]{baumequivalence}. Thus 
\begin{align*}
\mu(\partial W,x|_{\partial W},\phi|_{\partial W}):&=(\phi\circ i)_{*}\left([\partial W/PU(k)]\cap\iota_{\partial W}(x|_{\partial W})\right)=\\
&=\phi_*\circ i_{*}\circ\partial ([W^o/PU(k)]\cap\iota_{W}x)=0,
\end{align*}
as $i_*\circ\partial=0$.
\end{proof}

\begin{thm}
\label{analyticassembly}
If $P$ is a principal $PU(n)$-bundle over a finite $CW$-complex the projective analytic assembly mapping $\mu:K_*^{proj}(P)\to K_*^{an}(P)$ is an isomorphism.
\end{thm}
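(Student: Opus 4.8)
The plan is to compare $K_*^{proj}$ with the twisted geometric $K$-homology of Baum--Douglas--Wang type and reduce to a known result. Since the base $X$ is a finite $CW$-complex, its twisted analytic $K$-homology $K_*^{an}(P)$ is already known (by work of Wang, \cite{wangcycles}, or by a Mayer--Vietoris argument combined with the untwisted Baum--Douglas theorem) to be computed by a suitable geometric model. So the real content is a comparison isomorphism between projective cycles and the classical (Baum--Douglas) $K$-homology cycles for the Azumaya bundle $\mathcal{A}(P)$. I would first prove surjectivity of $\mu$ directly, and then injectivity by an explicit inverse on cycles, or alternatively deduce both from a five-lemma/Mayer--Vietoris comparison with untwisted $K$-homology.

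\textbf{Surjectivity.} First reduce to the case that $P$ has finite rank and torsion Dixmier--Douady class: on a finite $CW$-complex $H^3(X,\Z)$ is finitely generated, and the theorem is stated for $PU(n)$-bundles, so $\delta(P)$ is torsion by Remark \ref{ddremark}, and Serre's theorem lets us take $P$ to be an honest $PU(n)$-bundle. Now $K_*^{an}(P)=K_*(C(X,\mathcal{A}(P)))$, and by Theorem \ref{projectivevstwisted} this is $K^{*}_{proj}(P)$. It therefore suffices to hit every class of the form $\iota_P[E_1]-\iota_P[E_2]$ capped with $[X]$ — but one must first lift $X$ to a spin$^c$ situation. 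The standard trick (as in \cite{baumdtva}, Section 18) is to embed $X$ into a sphere $S^{2m}$, or more relevantly here to use the identification $\tilde\iota_P:K^*_{proj}(\tilde P)\cong K^*_{proj}(P)$ from Proposition \ref{spincstruazu} once $X$ is a manifold, and to handle a general finite $CW$-complex by an elementary bordism argument replacing $X$ by a manifold mapping to it. Concretely: given $x\in K_*^{an}(P)$, choose a manifold $M_0$ with spin$^c$-structure and a map $h:M_0\to X$ so that $x=h_*([M_0]\cap \xi)$ for some $\xi\in K^*(C(M_0,h^*\mathcal{A}(P)))$ — this is possible by the geometric description of $K$-homology of finite complexes. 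Then $h^*P$ is a $PU(n)$-bundle over $M_0$, pull back the $SO(2m)$-frame bundle to get an equivariant space $M:=\widetilde{h^*P}$ with a free spin$^c$-preserving $PU$-action, realize $\xi$ as $\iota$ of a projective $K$-theory class via Theorem \ref{projectivevstwisted} and Proposition \ref{spincstruazu}, and let $\phi:M\to P$ be the stable morphism induced by $h$. This produces a projective cycle with $\mu$-image $x$.

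\textbf{Injectivity.} Here I would define an inverse $\nu:K_*^{an}(P)\to K_*^{proj}(P)$ and check $\nu\circ\mu=\mathrm{id}$, or else run a direct comparison. The cleanest route is the comparison: both $K_*^{proj}$ and $K_*^{an}$ are functors on finite $CW$-pairs $(X,A)$ with the twist pulled back to $A$; one checks that projective $K$-homology has Mayer--Vietoris sequences (exactness follows from bordism and excision arguments identical to the untwisted Baum--Douglas case, \cite[Section 20]{baumdtva}) and that $\mu$ is a natural transformation compatible with these sequences (which uses that $\mu$ is natural, already proven). On a point (or on a contractible $X$, where the twist is trivial) the projective cycles reduce to ordinary Baum--Douglas cycles and $\mu$ is the classical assembly isomorphism of \cite{baumdtva}. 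Then a standard induction over cells / five-lemma argument gives that $\mu$ is an isomorphism for all finite $CW$-complexes. The base case on a point is the Atiyah--Singer theorem packaged as the Baum--Douglas isomorphism, which I may cite.

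\textbf{Main obstacle.} The routine part is the Mayer--Vietoris machinery; the delicate point is handling the stable/equivariant bookkeeping in the surjectivity step — matching the twisted $K$-theory class $\xi$ on a manifold with a genuine \emph{projective bundle} on the associated $PU$-space, passing between $\mathcal{A}(P)$-modules and projective bundles via Theorem \ref{projectivevstwisted} while simultaneously trivializing the spin$^c$-twist via $\tilde\iota_P$, and ensuring the resulting $\phi$ is a bona fide stable morphism realizing the correct class in $K^*(C_0(X,\mathcal{A}(P)))$ after $\phi_*$. A second subtlety is justifying the replacement of a finite $CW$-complex $X$ by a smooth manifold mapping to it that carries the relevant geometric data; this is standard for untwisted $K$-homology but must be checked to be compatible with pulling back the twist and with the free $PU(k)$-action requirement. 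I expect these compatibility checks, rather than any conceptual difficulty, to constitute the bulk of the proof.
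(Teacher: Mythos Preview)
Your approach differs substantially from the paper's, and the Mayer--Vietoris route you favour carries a real gap.

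The paper does not set up $K_*^{proj}$ as a homology theory and does not use any five-lemma or cell induction. Instead it constructs an explicit candidate inverse $\beta$ to $\mu$: embed $P$ equivariantly in a complex $PU(n)$-representation, take a smooth $PU(n)$-invariant compact neighbourhood, double it to a closed spin$^c$ principal $PU(n)$-bundle $N$ with retraction $P\xrightarrow{j}N\xrightarrow{p}P$ (Lemma~\ref{retract}), and set $\beta(x):=[N,PD_Nj_*(x),p]$. By naturality one may then assume $P$ itself is a spin$^c$ $PU(n)$-manifold. Surjectivity of $\mu$ follows from Poincar\'e duality. For injectivity the key geometric input is Lemma~\ref{movingoverf}: on such a $P$ every projective class satisfies $[M,E,f]=[P,f_!E,\id]$, proved via an equivariant embedding of $M$ into $P\times V^+$ and the bordism of Lemma~\ref{embedding}, both lifted essentially verbatim from the equivariant argument of \cite{boosw}. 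Then $\beta\mu[M,E,f]=[P,f_!E,\id]=[M,E,f]$, so $\beta$ is a two-sided inverse.

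The gap in your plan is the sentence ``exactness follows from bordism and excision arguments identical to the untwisted Baum--Douglas case.'' Even untwisted, turning Baum--Douglas cycles into a homology theory was only completed rigorously much later (Baum--Higson--Schick), and here you would need relative projective cycles, a boundary map, and excision for cycles whose manifold part carries a free $PU(k)$-action for \emph{varying} $k$; none of this is developed in the paper, and the paper's route avoids it entirely. Your surjectivity step also leans on an external geometric representability result for $K_*^{an}(P)$ (writing classes as $h_*([M_0]\cap\xi)$ with $M_0$ spin$^c$), which for twisted $K$-homology over a CW-complex is essentially of the same strength as the theorem you are proving. The paper's retraction trick sidesteps both issues at once: it never needs $X$ to be a manifold, never needs a homology-theory axiomatics for $K_*^{proj}$, and produces the inverse in a single stroke. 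Your throwaway alternative ``define an inverse $\nu$ and check $\nu\circ\mu=\id$'' is in fact exactly what the paper does; that was the right thread to pull.
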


The proof of Theorem \ref{analyticassembly} is very similar to that of \cite[Theorem 3.11]{boosw}. The similarity comes from the fact that \cite{boosw} deals with equivariant geometric $K$-homology and the geometric properties of the manifold part of the cycles are very similar. The difference in the vector bundles that comes from the extra structure of $SU(k)$-equivariant vector bundles with a specified central character (recall Definition \ref{projectivebundlemarker} on page \pageref{projectivebundlemarker}) does not affect the main idea in the proof as the isomorphism $\iota_P$ of Theorem \ref{projectivevstwisted} allows us to transfer the projective bundles to twisted $K$-theory where one has a similar toolbox with push-forwards and Poincar\'e duality.  The details of the proof will occupy the remainder of this section.

\begin{lem}
\label{retract}
There is a $PU(n)$-invariant retraction $P\xrightarrow{j} N\xrightarrow{p} P$ into a spin$^c$ $PU(n)$-principal bundle $N$.
\end{lem}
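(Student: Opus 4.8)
The plan is to produce the retraction by embedding $X$ as a neighbourhood retract, pulling the bundle back along such a retraction, and then correcting the total space to be spin$^c$. First I would use that a finite $CW$-complex $X$ embeds in some Euclidean space $\R^\nu$ with a tubular-type neighbourhood: there is an open set $\Omega\subseteq \R^\nu$ and retraction $r:\Omega\to X$. Replacing $\Omega$ by a smaller open set and smoothing, I may take $\Omega$ to be a smooth open manifold. Pulling $P$ back along $r$ gives a principal $PU(n)$-bundle $r^*P\to \Omega$ whose total space is a (noncompact) smooth manifold, together with the $PU(n)$-equivariant maps $j_0:P\hookrightarrow r^*P$ (over the inclusion $X\hookrightarrow \Omega$) and $p_0:r^*P\to P$ (the projection to the first factor composed with the bundle map), satisfying $p_0\circ j_0=\id_P$. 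So at the level of total spaces we already have a $PU(n)$-equivariant retraction onto $P$; the only thing missing is that $r^*P$ need not carry a spin$^c$-structure.

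Next I would fix the spin$^c$-condition. The relevant obstruction is $W_3(r^*P)\in H^3(r^*P,\Z)$, the third integral Stiefel--Whitney class of the total space, which must vanish for a spin$^c$-structure (equivalently $w_2$ lifts). Here I would exploit the freedom to stabilise: replacing $\Omega$ by $\Omega\times \R^{2q}$ for suitable $q$ (and composing with the obvious projection, which does not change the existence of the retraction), one can arrange the normal data so that $w_2$ of the total space is the pullback of a class on $X$ which we can kill. More precisely, $T(r^*P)$ is, up to pullbacks, built from $r^*TX$ (stably, $T\Omega$ pulled back), the vertical tangent bundle along the fibres $PU(n)$, and the tangent bundle of the structure group; taking products with even-dimensional spin$^c$ manifolds and adding trivial bundles lets me adjust the stable tangent bundle of the total space without disturbing either the $PU(n)$-action or the base homotopy type, and after finitely many such modifications $W_3$ of the new total space vanishes. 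Call the resulting smooth spin$^c$ principal $PU(n)$-bundle $N$, with its $PU(n)$-equivariant projection $p:N\to P$ and section $j:P\to N$ obtained from $j_0$ composed with the zero sections of the stabilising bundles, so that $p\circ j=\id_P$.

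The main obstacle I expect is organising the spin$^c$-correction \emph{equivariantly and compatibly with the $PU(n)$-principal bundle structure} — i.e. making sure that after the modifications $N$ is still a principal $PU(n)$-bundle over something spin$^c$ (the quotient $PU(n)\backslash N$ must inherit the spin$^c$-structure, via the two-out-of-three lemma applied to the fibration $PU(n)\to N\to PU(n)\backslash N$) and that the section and projection remain $PU(n)$-equivariant. This is where I would be careful: stabilising by $V_0\to PU(n)\backslash(r^*P)$ pulled back to $r^*P$ (rather than by arbitrary bundles) keeps everything in the principal-bundle world, and the class to be killed lives on the finite complex $PU(n)\backslash(r^*P)\simeq \Omega$, which retracts to $PU(n)\backslash P\simeq X$; since $H^3$ of a finite complex is finitely generated, finitely many such even-rank spin$^c$ stabilisations suffice. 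Everything else — smoothness of $\Omega$, the elementary homotopy yielding $p\circ j=\id$, naturality of the constructions — is routine. The upshot is the desired factorisation $P\xrightarrow{j}N\xrightarrow{p}P$ with $N$ a spin$^c$ $PU(n)$-principal bundle.
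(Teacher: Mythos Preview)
There are two genuine gaps. First, $N$ must be \emph{closed}---it is used as the manifold part of a projective cycle when defining $\beta$---but your $r^*P$ lives over an open set $\Omega\subseteq\R^\nu$ and is noncompact; you never close it up. Second, your spin$^c$ correction is both unnecessary and incorrect. It is unnecessary because $T\Omega$ is trivial and the vertical bundle of any principal bundle is trivial, so $r^*P$ is already parallelisable and hence spin$^c$. It is incorrect because products with spin$^c$ manifolds and trivial stabilisations do not change $W_3$: if $S$ is spin$^c$ then $W_3(M\times S)=W_3(M)\times 1$, so a nonzero obstruction would persist. What actually requires care (and what you do not address) is whether the spin$^c$ structure can be chosen $PU(n)$-\emph{invariantly}.

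The paper avoids all of this by working upstairs rather than on the base: it embeds $P$ itself $PU(n)$-equivariantly into a \emph{complex} $PU(n)$-representation $W$ (cf.\ \cite{boosw}). The complex structure on $W$ yields a spin$^c$ structure that is automatically $PU(n)$-invariant since the action is $\C$-linear. Compactness is obtained by choosing a compact invariant submanifold-with-boundary $V$ inside a tubular neighbourhood and setting $N$ equal to its double, with $p$ the fold map followed by the retraction. Freeness of the action on the double---so that $N$ is genuinely a principal bundle---follows from Proposition~\ref{freecompactneighborhood}.
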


\begin{proof}
Based on the proof of \cite[Lemma 2.1]{boosw}, there is a $PU(n)$-equivariant embedding of $P$ into a complex $PU(n)$-representation $W$ and for a small enough $PU(n)$-invariant neighborhood $U$ of $P$ there is a retraction $U\to P$. Furthermore, there is a $V\subseteq U$ that is a $PU(n)$-invariant manifold with boundary. We let $N$ denote the double of $V$, $j:P\to N$ the obvious embedding and $p:N\to P$ the fold map composed with $U\to P$. The spin$^c$ structure on $N$ comes from the complex structure on $W$ so it is compatible with the $PU(n)$-action. Since $P$ is compact, we can assume that $U$ is small enough so that $\rd(U, (W^{fr})^c)>0$ by Proposition \ref{freecompactneighborhood} so the $PU(n)$-action on $U$ is free. Thus $N$ is a principal $PU(n)$-bundle by the Slice Theorem.
\end{proof}

This lemma allows us to define the mapping $\beta:K_*^{an}(P)\to K_*^{proj}(P)$ by
\[\beta(x):=[N,PD_N j_*(x),p].\]

\begin{prop}
The mapping $\beta$ is well defined and natural.
\end{prop}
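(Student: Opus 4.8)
The plan is to verify the two claimed properties of $\beta$ separately, working through the maps $K_*^{an}(P)\xrightarrow{j_*} K_*^{an}(N)\xrightarrow{PD_N} K^*_{proj}(N)$ and then forming the cycle $[N,PD_N j_*(x),p]$. Since $N$ is a spin$^c$ $PU(n)$-principal bundle by Lemma \ref{retract}, Poincar\'e duality $PD_N:K_*^{an}(N)\to K^*_{proj}(N)$ is the inverse of capping with the fundamental class $[PU(n)\backslash N]$ composed with $\iota_N$, so $PD_N j_*(x)$ is an honest element of $K^0_{proj}(N)$ (after splitting into a difference of projective bundles via Proposition \ref{bordvbvskt}, so that a genuine projective cycle, or cycle with $K$-theory data, is obtained). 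The retraction data $P\xrightarrow{j} N\xrightarrow{p} P$ from Lemma \ref{retract} give the stable morphism $p:N\to P$ needed to complete the triple, so the definition makes sense on the level of cycles. The only issue is well-definedness: the construction of $N$, $j$, $p$ in Lemma \ref{retract} involves choices (the equivariant embedding into $W$, the neighborhood $U$, the manifold-with-boundary $V\subseteq U$), so one must show the resulting class in $K_*^{proj}(P)$ is independent of these.

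First I would treat the independence of choices. Given two retractions $P\xrightarrow{j_1} N_1\xrightarrow{p_1} P$ and $P\xrightarrow{j_2} N_2\xrightarrow{p_2} P$ produced by the construction, the standard argument (compare \cite[Theorem 3.11]{boosw} and \cite[Section 11]{baumdtva}) is to build a bordism of projective cycles interpolating between $[N_1,PD_{N_1}(j_1)_*(x),p_1]$ and $[N_2,PD_{N_2}(j_2)_*(x),p_2]$. One embeds both $N_1$ and $N_2$ into a common larger $PU(n)$-representation, takes a $PU(n)$-invariant manifold-with-boundary whose boundary realizes $N_1\dot\cup(-N_2)$, and uses the homotopy between the retractions onto $P$ (which exists since both retract a common neighborhood of $P$) together with naturality of Poincar\'e duality under pullback (Lemma \ref{pullbackpushforward}) to propagate the $K$-theory data $PD(j)_*(x)$ over the bordism. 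The compatibility of spin$^c$-structures follows as in Lemma \ref{retract} from the ambient complex structure, and freeness of the $PU(n)$-action on a neighborhood of $P$ follows from Proposition \ref{freecompactneighborhood}.

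Next, naturality. If $g:P\to P'$ is a (stable) morphism of principal $PU(n)$-bundles covering $g_0$ on the base, one must show $g_*\beta(x)=\beta(g_*x)$ in $K_*^{proj}(P')$, where on the left $g$ acts on cycles by post-composing the stable morphism $p$ with $g$ and on the right $g_*:K_*^{an}(P)\to K_*^{an}(P')$ is the pushforward in twisted $K$-homology. Concretely $g_*[N,PD_N j_*(x),p]=[N,PD_N j_*(x),g\circ p]$; one then compares this with the cycle built from a retraction $P'\xrightarrow{j'} N'\xrightarrow{p'} P'$ applied to $g_*x$. Using the well-definedness just established, it suffices to produce \emph{one} convenient choice of retraction data on $P'$ compatible with $N$: e.g. embed $P'$ appropriately and arrange a map $N\to N'$ covering $g$ and intertwining the retractions, then invoke naturality of $PD$ and of $j_*$ to match the $K$-theory data. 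Both properties are then immediate consequences of the corresponding functoriality statements for twisted $K$-homology pushforward, capping with the fundamental class, and $\iota_N$ (Theorem \ref{projectivevstwisted} and Lemma \ref{productprop}).

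The main obstacle is the well-definedness argument: constructing the bordism of projective cycles that absorbs the ambiguity in Lemma \ref{retract} and simultaneously carries the $K$-theory data $PD(j)_*(x)$ coherently. The delicate point is that the retraction $p:N\to P$ is only built as a \emph{fold map composed with a neighborhood retraction}, so to compare two such one needs a genuine homotopy between the neighborhood retractions onto $P$ inside a common representation, and then one must check that this homotopy can be promoted to a spin$^c$-bordism on which the (pulled-back) Poincar\'e-dual $K$-theory classes agree by naturality. Once this is in place, naturality is essentially formal, inherited from the functoriality of all the ingredient maps.
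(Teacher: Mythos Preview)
Your approach is correct in spirit but genuinely different from the paper's. The paper does not give a direct bordism argument for independence of the retraction data at all: it observes that for a \emph{fixed} retract the map $\beta$ is obviously well defined, and then defers the independence-of-retract statement to the proof of Theorem \ref{analyticassembly}. There one shows that $\beta$ is a two-sided inverse to the assembly map $\mu$; since $\mu$ does not depend on any retract, neither can its inverse, so the ambiguity in Lemma \ref{retract} is absorbed \emph{a posteriori}. The same logic handles naturality: once $\beta=\mu^{-1}$, naturality of $\beta$ is inherited from naturality of $\mu$.

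Your route via explicit bordisms between retraction data is the more hands-on alternative, and in principle it works, but it buys you less here. The delicate point you flag (promoting a homotopy of retractions to a spin$^c$-$PU(n)$-bordism carrying the Poincar\'e-dual class coherently) is exactly the kind of work the paper avoids by reversing the logical order. Conversely, the paper's shortcut costs something: the Proposition as stated is only fully justified \emph{after} Theorem \ref{analyticassembly} is proved, so strictly speaking $\beta$ is only a provisionally-defined map (depending on a fixed retract) until that point. Your argument, if carried out, would make the Proposition self-contained.
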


This is obvious for a fixed retract of $P$. The proof that $\mu$ is an isomorphism will consist of a proof that $\beta$ is its inverse.  From which, it will follow that $\beta$ is independent of the retract.

\begin{lem}
\label{embedding}
If $(M,E,f\circ h)$ is a projective cycle such that $h:M\to N$ is a $PU(n)$-equivariant inclusion of spin$^c$-manifolds with normal bundle $\nu$, the vector bundle modification of $(M,E,f\circ h)$ along $\C\oplus \nu$ is bordant to the vector bundle modification of $(N,h_!E,f)$ along $\C$.
\end{lem}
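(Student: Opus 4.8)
The plan is to realize both vector bundle modifications as iterated sphere bundle constructions over $M$ and to exhibit the bordism by a ball-bundle construction, using the compatibility between vector bundle modification and push-forward along an embedding. Write $Y:=PU(n)\backslash N$ for the base and recall that $h:M\to N$ factors over a $PU(n)$-equivariant embedding of bases $h_0:PU(n)\backslash M\hookrightarrow Y$ with normal bundle $\nu_0$ (so $\nu=\pi_M^*\nu_0$). First I would observe that, by the tubular neighborhood theorem, there is a $PU(n)$-invariant open set $U\subseteq N$ containing $h(M)$ which is $PU(n)$-equivariantly diffeomorphic (over the base) to the total space of $\nu_0$; this is the standard device for comparing $h_!E$ restricted near $M$ with the Thom/Bott class of $\nu$. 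Using the description of push-forward along an embedding from Subsection \ref{pushforsubsec} (the composition of the Thom isomorphism $0_!$ onto the normal bundle with extension by $0$), the class $h_!E\in K^*_{proj}(\tilde N)$ is supported near $h(M)$ and there agrees, under $\iota$, with the projective-bundle-valued Thom class of $\nu$ twisted by $E$.

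The key computation is then to unwind both sides of the claimed bordism. On the one hand, $(N,h_!E,f)^{\C}$ has manifold part $N^{\C}=S(N\times\R\oplus 1_\R)$, a sphere bundle over $Y$, and its bundle data is $\pi^*(h_!E)\otimes \pi^*Q_{\C}$. On the other hand, $(M,E,f\circ h)^{\C\oplus\nu}$ has manifold part $M^{\C\oplus\nu}=S(\C\oplus\nu\oplus 1_\R)$, a sphere bundle over $PU(n)\backslash M$, with bundle data $\pi_{M^{\C\oplus\nu}}^*E\otimes\pi^*Q_{\C\oplus\nu}$. The point is that, over the tubular neighborhood $U\cong\nu_0$, the sphere bundle $N^{\C}|_U$ is diffeomorphic to $M^{\C\oplus\nu}$ compatibly with spin$^c$-structures (two-out-of-three) and the Bott class $Q_{\C}$ pulled back from $N$ restricts, over this identification, to $Q_{\C\oplus\nu}$ twisted by the normal Bott class of $\nu$ — which is exactly the extra twist that turns $E$ into $h_!E$. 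So the two cycles agree, up to a degenerate summand coming from the rank of the Bott bundle of $\nu$ (the difference between the Bott bundle $Q_{\C\oplus\nu}$ and its class $[\beta_{\C\oplus\nu}]$), over $U$.

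To produce the actual bordism I would use a ball-bundle interpolation, exactly as in the proof following Definition \ref{vebemod}: form the closed ball bundle $\bar B((N\setminus U)\times\ldots)$ — more precisely, the cycle $N^{\C}$ carries a natural decomposition $N^{\C}=(N^{\C}|_U)\cup_{\partial}(N^{\C}|_{N\setminus U})$, and $N^{\C}|_{N\setminus U}$ bounds the associated ball bundle, which gives a projective cycle with boundary whose boundary is $N^{\C}|_U$ together with a degenerate piece. Gluing this to the product bordism $M^{\C\oplus\nu}\times[0,1]$ along the common boundary component $M^{\C\oplus\nu}\cong N^{\C}|_U$, and absorbing the degenerate summands using the disjoint union/direct sum relation and the identity $[\beta_V]=[Q_V]-\mathrm{rk}\,Q_V[1]$, yields the desired projective bordism between $(M,E,f\circ h)^{[\C\oplus\nu]}$ and $(N,h_!E,f)^{[\C]}$; passing between bundle data and $K$-theory data is harmless by Proposition \ref{bordvbvskt} and Remark \ref{borddefkproj}.

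\textbf{Main obstacle.} The routine parts (two-out-of-three for spin$^c$-structures, the ball-bundle bordism, bookkeeping of degenerate summands) follow the template already in the paper. The genuine point, and where I expect to spend the most care, is the identification over the tubular neighborhood: one must check that under $N^{\C}|_U\cong M^{\C\oplus\nu}$ the pulled-back Bott bundle $\pi^*Q_{\C}$ together with $\pi^*(h_!E)$ matches $\pi_{M^{\C\oplus\nu}}^*E\otimes\pi^*Q_{\C\oplus\nu}$ — i.e. that the Thom/Bott class built into $h_!E$ is precisely accounted for by the extra $\nu$-direction in the modification. This is a statement about the multiplicativity of Bott classes under the splitting $\C\oplus\nu = \C\oplus\nu$ versus the clutching description of push-forward along $h$, and verifying it carefully at the level of projective bundles (keeping track of the $SU(k)$-equivariant structure and central characters, via $\iota_M$ and $\iota_N$) is the crux of the argument.
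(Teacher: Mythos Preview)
Your proposal has a genuine gap at its central geometric step. You assert that over the tubular neighborhood $U\cong\nu_0$ the restriction $N^{\C}|_U$ is diffeomorphic to $M^{\C\oplus\nu}$, and you then try to glue a product bordism along this ``common boundary component.'' But this diffeomorphism is false. Since $\C$ is the trivial bundle, $N^{\C}$ is just $N\times S^2$, so $N^{\C}|_U$ is (the $PU(n)$-lift of) the total space of $\nu_0$ crossed with $S^2$; if you take $U$ open this is non-compact, and if you take the closed tube it is $\bar D(\nu_0)\times S^2$, a manifold with boundary $S(\nu_0)\times S^2$. On the other side, $M^{\C\oplus\nu}=S(\C\oplus\nu_0\oplus 1_\R)$ is a closed sphere bundle over $PU(n)\backslash M$. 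These are not diffeomorphic, and in particular the sentence ``$N^{\C}|_{N\setminus U}$ bounds the associated ball bundle'' does not parse: $(N\setminus U)\times S^2$ is already a manifold with boundary, not a closed hypersurface bounding something.

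What is true---and this is the content of \cite[Theorem 4.1]{boosw}, which the paper simply invokes---is that there is an explicit bordism $W$ between these two cycles, but it is not obtained by cutting $N^{\C}$ along $\partial U$. The standard construction is rather to take $N\times \bar B(\C\oplus 1_\R)$ and excise an open tubular neighborhood of $M\times\{0\}$; the normal bundle of $M\times\{0\}$ in $N\times \bar B(\C\oplus 1_\R)$ is $\nu\oplus\C\oplus 1_\R$, so the new boundary component is exactly $M^{\C\oplus\nu}$, and the original boundary is $N\times S^2=N^{\C}$. The delicate part is then to equip $W$ with a projective $K$-theory class restricting correctly on both ends, and this is where the multiplicativity of Bott classes and the description of $h_!$ as Thom-isomorphism-plus-extension-by-zero enter (your ``main obstacle'' paragraph has the right instinct here, and Lemma \ref{pullbackpushforward} is what makes the argument transfer to the projective setting). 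Your excision-style decomposition of $N^{\C}$ does not produce this bordism; you are working one dimension too low.
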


We will not prove Lemma \ref{embedding} since the proof is the same as that of \cite[Theorem 4.1]{boosw} mutatis mutandis; the same bordism can be used due to Lemma \ref{pullbackpushforward}. 

\begin{lem}
\label{movingoverf}
If $P$ is a principal spin$^c$-$PU(n)$-bundle and $(M,E,f)$ is a projective cycle over $P$,
\[[M,E,f]=[P,f_!E,\id].\]
\end{lem}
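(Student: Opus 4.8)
The plan is to reduce the general cycle $(M,E,f)$ to the model cycle $(P,f_!E,\mathrm{id})$ by the standard Baum–Douglas strategy of embedding $M$ into $P$ (after a vector bundle modification of $P$) and then invoking the ``embedding lemma'' already available as Lemma \ref{embedding}. First I would use that $P$ is a spin$^c$-$PU(n)$-principal bundle to find a $PU(n)$-equivariant embedding of $M$ into a trivial bundle $P\times W$ for a suitable complex $PU(n)$-representation $W$; composing with $f\times\mathrm{pr}$-type data, this exhibits $M$ as a $PU(n)$-equivariantly embedded spin$^c$-submanifold of the total space of a (trivial) spin$^c$-vector bundle $V_0$ over $PU(n)\backslash P$, i.e. of $P^{V}$ where $V=\pi_P^*V_0$. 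Here one must be a little careful: $W$ carries a linear $PU(n)$-action that need not be spin$^c$-compatible fiberwise, so one takes $W\oplus W$ or $W\oplus\bar W$ to get an honest spin$^c$-structure, exactly as in Lemma \ref{retract}; this is harmless up to the vector bundle modification relation since adding a trivial even-dimensional spin$^c$ summand is built into the definition of $\sim_{bor}$ combined with vbm.

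The key steps, in order: (1) Produce the equivariant embedding $h:M\hookrightarrow P^{V_0}$ over the map $\phi_0:PU(k)\backslash M\to PU(n)\backslash P$ underlying $f$, with $\phi\circ\pi_{M^{V_0}}$ agreeing with the composite $M\to P^{V_0}\to P$; let $\nu$ be its normal bundle. (2) Apply Lemma \ref{embedding}: the vector bundle modification of $(M,E,f\circ h)$ along $\C\oplus\nu$ is bordant to the vector bundle modification of $(P^{V_0}, h_!E, \mathrm{pr})$ along $\C$, where here I should really phrase everything relative to the base, pulling back along $\pi$. (3) Since vector bundle modification is one of the moves generating the equivalence relation, both sides of the bordism in (2) represent the same class as $[M,E,f]$ and $[P^{V_0}, h_! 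E,\ \text{projection}]$ respectively in $K_*^{proj}(P)$. (4) Finally, reduce $(P^{V_0}, h_!E, \text{proj}\circ\pi_{P^{V_0}})$ to $(P, f_! E,\mathrm{id})$: the projection $P^{V_0}\to P$ is itself (a sphere bundle of) a spin$^c$-vector bundle modification of $P$, so by Definition \ref{vebemod} applied with the Bott bundle one identifies the pushed-forward $K$-theory data, using that $h_!E$ restricted along the south-pole section recovers a class on $P$ whose further push-forward is $f_!E$ by functoriality of push-forward (Lemma \ref{pullbackpushforward}), together with the compatibility of $\iota_P$ with push-forwards and the twisted Thom isomorphism established in Subsection \ref{pushforsubsec}.

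For the bookkeeping of the $K$-theory data I would argue at the level of projective cycles with $K$-theory data (justified by Remark \ref{borddefkproj}), so that $f_!E$ makes sense as an element of $K^0_{proj}(\tilde P)$ via $\tilde\iota_P$ and Definition \ref{push}, and the identity $\pi_! h_! E = f_! E$ (where $\pi:P^{V_0}\to P$) is just transitivity of push-forward. The fact that vbm along $\C\oplus\nu$ versus along $\C$ does not change the class is exactly Definition \ref{vebemod} combined with the fact that we are working modulo the vbm relation. One subtle point to check is that the stable morphism data $\phi$ is genuinely compatible along all these moves: $\phi$ descends to $\phi_0$ on bases (Proposition \ref{hompropstab}), and every manipulation above is performed fiberwise over $PU(n)\backslash P$, so the stable-morphism part is transported along without difficulty; still, one should spell out that the embedding $h$ can be chosen so that $\phi = (\text{projection to }P)\circ h$ as stable morphisms, not merely on underlying base maps.

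The main obstacle I expect is step (1) together with this compatibility check: getting an equivariant spin$^c$-embedding of $M$ into a modification of $P$ that is compatible with the prescribed stable morphism $\phi$, rather than just with the base map $\phi_0$. The embedding into $P\times W$ is easy by equivariant Whitney (as in \cite{boosw}), but one must arrange that the projective/twist data matches — i.e. that $\mathcal{A}(M)$ pulls back correctly and $E$ lives over the right bundle after the embedding — which is where the isomorphism $\iota_P$ of Theorem \ref{projectivevstwisted} and the $PU(\infty)$-pullback formalism of Subsection 1.2 do the real work, allowing one to move projective bundles along the embedding exactly as one moves vector bundles in the untwisted case. Once that translation is in place, steps (2)–(4) are essentially the classical Baum–Douglas argument run through $\iota_P$, and I would present them briefly, citing \cite{boosw} for the bordisms as the paper already does for Lemmas \ref{embedding} and \ref{retract}.
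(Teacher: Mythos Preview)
Your proposal is correct and follows essentially the same strategy as the paper: embed $M$ equivariantly into a sphere-bundle modification of $P$, apply Lemma~\ref{embedding}, and use functoriality of push-forward. The paper dispatches your ``main obstacle'' (step (1) and the compatibility of $\phi$ with $h$) and your step (4) in one stroke by first stabilizing so that $M$ is a $PU(n)$-bundle, then taking $h := f\times j : M \to P\times V^+$ for a $PU(n)$-equivariant embedding $j:M\to V$ into a complex representation, so that $\mathrm{pr}_P\circ h = f$ holds on the nose as equivariant maps; since $j$ is homotopic to the constant south-pole map $c$, one gets $h_! = (\id_P\times c)_!\circ f_!$ directly, and the chain $[P,f_!E,\id_P]=[P\times V^+,(\id_P\times c)_!f_!E,\mathrm{pr}_P]=[P\times V^+,h_!E,\mathrm{pr}_P]=[M,E,f]$ follows from Lemma~\ref{embedding} applied twice.
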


The proof is mutatis mutandis the same as the final paragraphs of \cite[Section 4]{boosw} but since there is a small difference in the cycles, we include a sketch of this proof.

\begin{proof}
After stabilization, using the data contained in $\phi$, we can assume that $M$ is a $PU(n)$-bundle. Choose a $PU(n)$-equivariant embedding $j:M\to V$ into a complex $PU(n)$-representation $V$. Let $V^+$ denote the sphere in $V\times \R$ and $\mathrm{pr}_P:P\times V^+\to P$ the projection onto the first coordinate. The mapping $h:=f\times j:M\to P\times V^+$ is a $PU(n)$-equivariant embedding and $\mathrm{pr}_P\circ h=f$. The $PU(n)$-action on $P\times V^+$ is free, because the action on $P$ is free. The embedding $j$ is homotopic to the south pole mapping $c:M\to V^+$ so $h_!=(f\times c)_!$. Lemma \ref{pullbackpushforward} and Lemma \ref{embedding} implies that
\begin{align*}
[P,f_!E,\id_P]&=[P,f_!E,\mathrm{pr}_P\circ (\id_P\times c)]=[P, (\id_P\times c)_!f_!E,\mathrm{pr}_P]=\\
&=[P,h_!E,\mathrm{pr}_P]=[M,E,\mathrm{pr}_P\circ h]=[M,E,f].
\end{align*}
\end{proof}

\begin{proof}[Proof of Theorem \ref{analyticassembly}]
We will prove the claim that $\beta$ is an inverse to $\mu$. By naturality, we can assume that $P$ is a $PU(n)$-spin$^c$-manifold. By Lemma \ref{movingoverf}, we have that 
\[\beta\mu[M,E,f]=\beta([X]\cap f_![E])=[P,f_![E],\id]=[M,E,f].\]
It follows that $\mu$ is injective with right inverse $\beta$. Finally, we observe that Poincar\'e duality implies that $\mu$ is surjective. So $\mu$ is a bijection and the uniqueness of inverses implies that $\beta$ also forms a left inverse to $\mu$.
\end{proof}

Our method of proof that $\mu$ is an isomorphism breaks down whenever the Dixmier-Douady invariant is non-torsion. A similar type of result for non-torsion Dixmier-Douady invariants that concerns the assembly in twisted geometric $K$-homology can be found in \cite{wangcycles}; we discuss it below. In this case, one uses homotopy theoretic methods. Another approach, which can be found in \cite{BCW}, is to allow for general twisted $K$-theory data in the cycles. For the cycles in \cite{BCW}, the proof above goes through in the non-torsion case. In the case of non-torsion twist, it is unclear if the assembly mapping on projective cycles is an isomorphism. Despite this we will now give a class of examples of non-torsion Dixmier-Douady invariant where it is surjective.

\Large
\section{Examples}
\normalsize
\label{sectionexamples}

\subsection{$T$-duality and projective cycles}
\label{tdualprojective}

A very direct way to construct projective cycles is on the $T$-dual of a circle bundle. We will take a $C^*$-algebraic approach to $T$-duality. For a general overview of $T$-duality, see \cite{rosenberg}. Let us describe $T$-duality briefly; if $P\to Z$ is a principal $PU(\He)$-bundle and $\pi:Z\to X$ is a principal circle bundle there is a $T$-dual principal $PU(\He\otimes L^2(S^1))$-bundle $P^T\to Z^T$ on the $T$-dual principal circle bundle $\pi^T:Z^T\to X$ and these satisfy the conditions
\[\pi_*\delta(P)=c_1(Z^T), \;\; \pi^T_*\delta(P^T)=c_1(Z) \;\;\mbox{and}\;\; K_*(C(Z,\mathcal{A}(P)))\cong K_{*+1}(C(Z^T,\mathcal{A}(P^T))).\]
In the $C^*$-algebraic approach, one lifts the fiberwise $\R$-action on the circle bundle $Z$ to an $\R$-action on $C(Z,\mathcal{A}(P))$, see \cite[Lemma 7.5]{rosenberg}. The crossed product $C(Z,\mathcal{A}(P))\rtimes \R$ is a continuous trace algebra and one defines $Z^T$ and $P^T$ by
\[C(Z^T,\mathcal{A}(P^T)):=C(Z,\mathcal{A}(P))\rtimes \R.\]

The reason that $T$-duality shifts $K$-theory is the Connes-Thom isomorphism (see for example \cite{blackadar}). If $A$ is an $\R-C^*$-algebra with $\R$-action $\alpha$ there is an associated short exact sequence of $\R-C^*$-algebras
\[0\to C_0((0,1),A)\to C_0([0,1),A)\to A\to 0,\]
where the $\R$-action on $A$ is trivial and on $C_0([0,1),A)$ the $\R$-action is defined by $s\cdot f(t):=\alpha_{st}(f(t))$. Since $\R$ is nuclear, crossed product by $\R$ is an exact functor. Using that $C_0((0,1),A)\rtimes \R\cong C_0(\R)\otimes A\rtimes \R$ we arrive at the short exact sequence of $C^*$-algebras
\begin{equation}
\label{ctsequence}
0\to C_0(\R)\otimes A\rtimes \R \to C_0([0,1),A)\rtimes \R\to C_0(\R)\otimes A\to 0.
\end{equation}
This produces the Connes-Thom class $\alpha_{CT}\in KK^1(A,A\rtimes \R)$. Similarly, using Takesaki-Takai duality $A\rtimes \R\rtimes \R\cong A\otimes \Ko$ one can construct $\alpha_{CT}^{-1}\in KK^1(A\rtimes \R,A)$. The Connes-Thom isomorphisms on $K$-theory and $K$-homology comes from the following Kasparov products:
\[-\otimes \alpha_{CT}:K_*(A)\xrightarrow{\sim} K_{*+1}(A\rtimes \R) \quad\mbox{and}\quad \alpha_{CT}\otimes -:K^*(A\rtimes \R)\xrightarrow{\sim} K^{*+1}(A).\]
Observe that if $A=C(Z)$ and $Z\to X$ is a circle bundle, the short exact sequence \eqref{ctsequence} is $C(X)$-linear. Especially, the Connes-Thom isomorphisms are $K^*(X)$-linear.

We will construct projective cycles on the $T$-dual of a circle bundle $\pi_Z:Z\to X$ with $P=Z$. The crossed product $C(Z)\rtimes \R$ is a continuous trace algebra with spectrum $Z^T=X\times S^1$. Let us be a bit more precise and from an open subset $U\subseteq X$, a trivialization $g:\pi_Z^{-1}(U)\to U\times S^1$ and a $\theta_0\in S^1$ construct an isomorphism 
\[g^*:\mathcal{A}(P^T)|_{g^{-1}(U\times S^1\setminus\{\theta_0\})}\cong U\times S^1\setminus \{\theta_0\}\times \Ko(L^2(S^1)).\]
The space $C_c(U\times S^1\times \R)$ can (via $g$) be identified with a dense subset of all sections $g^{-1}(U\times S^1)\to \mathcal{A}(P^T)$ vanishing at the boundary. If $\theta\in \R/\Z=S^1$ we define 
\[\He_\theta:=\{f:\R\to \C:\;f(t+1)=\e^{2\pi i\theta}f(t),\; \int_0^1|f|^2\rd t<\infty\}.\]
It is clear that restriction to $[0,1)$ and periodic extension provides an isomorphism $\He_\theta\cong L^2(S^1)$. Now for any $(x,\theta)\in U\times S^1$ we have a mapping 
$$\pi_{(x,\theta)}:C(X\times S^1,\mathcal{A}(P^T))\to \Ko(\He_\theta)$$ 
defined by means of the representation 
\[\pi_{(x,\theta)}:C_c(U\times S^1\times \R)\to  \Ko(\He_\theta),\; \pi_{(x,\theta)}(a)f(t):=\int_\R a(x,t,t-r)f(r)\rd r.\]
This construction shows that a choice of a trivializations of $Z$ on a cover of $X$ produces trivializations of $P^T$ on the same cover of $X$.

\begin{prop}
\label{tildephi}
Assume that $\phi:M\to Z$ is a continuous mapping to the total space of a circle bundle $\pi_Z:Z\to X$. We let $\tilde{\phi}:M\times S^1\to Z$ denote the equivariant mapping $\tilde{\phi}(m,\theta):=\theta\cdot\phi(m)$. The crossed product construction produces a $PU(L^2(S^1))$-equivariant mapping $\tilde{\phi}\rtimes \R:M\times S^1\times PU(L^2(S^1))\to P^T$ which fits into the commutative diagram
\[\begin{CD}
M\times S^1\times PU(\He)@>\tilde{\phi}\rtimes \R>>P^T \\
@VVV @VVV \\
M\times S^1@>\pi_Z\phi\times \id>>X\times S^1 \\
\end{CD}\]
\end{prop}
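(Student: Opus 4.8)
The plan is to construct $\tilde\phi\rtimes\R$ by applying the crossed-product functor to the equivariant $*$-homomorphism underlying $\tilde\phi$, in exactly the way that $P^T$ itself is built out of $C(Z,\mathcal A(P))\rtimes\R$. The starting observation is that $\tilde\phi$ is $\R$-equivariant. Let $\R$ act on $M\times S^1$ by translation on the $S^1$-factor (the lift of the principal $S^1$-action on the trivial circle bundle $M\times S^1\to M$) and on $Z$ by the fibrewise rotation action used in the $T$-duality construction. Since the twist on $Z$ is trivial, $\mathcal A(P)=C(Z)$ with the $\R$-action obtained by pulling functions back along the geometric action, and, using commutativity of the circle group,
\[\tilde\phi(s\cdot(m,\theta))=(\theta+s)\cdot\phi(m)=s\cdot\big(\theta\cdot\phi(m)\big)=s\cdot\tilde\phi(m,\theta),\qquad s\in\R .\]
Hence $\tilde\phi^{*}\colon C(Z)\to C(M\times S^1)$ is an $\R$-equivariant $*$-homomorphism of (unital, as $M$ is compact) $C^{*}$-algebras.

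Applying the crossed-product functor -- full crossed products, which agree with the reduced ones since $\R$ is amenable -- yields a $*$-homomorphism
\[\tilde\phi^{*}\rtimes\R\colon C(Z)\rtimes\R\longrightarrow C(M\times S^1)\rtimes\R .\]
By the definition of $T$-duality the source is $C(Z^{T},\mathcal A(P^{T}))=C(X\times S^1,\mathcal A(P^{T}))$. For the target, note that $M\times S^1\to M$ is the trivial principal circle bundle with trivial twist; its $T$-dual data is again trivial, and the construction recalled in the paragraph preceding the proposition (the tautological trivialization of $M\times S^1\to M$ together with the representations $\pi_{(x,\theta)}$), equivalently the statement $C(S^1)\rtimes\R\cong C(S^1,\Ko(L^2(S^1)))$ for the translation action (Green's imprimitivity theorem applied to $\Z\le\R$), produces a canonical isomorphism $C(M\times S^1)\rtimes\R\cong C(M\times S^1,\Ko(L^2(S^1)))$, i.e.\ the section algebra of the trivial $PU(L^2(S^1))$-bundle over $M\times S^1$. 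Thus $\tilde\phi^{*}\rtimes\R$ becomes a non-degenerate $*$-homomorphism $C(X\times S^1,\mathcal A(P^{T}))\to C(M\times S^1,\Ko(L^2(S^1)))$.

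It remains to unwind this $*$-homomorphism into geometric data. Being non-degenerate, it induces a (proper, since all spaces are compact) continuous map $f$ on spectra; one identifies $f=\pi_Z\phi\times\id_{S^1}$ via the Mackey--Rieffel description of $C(Z)\rtimes\R$ -- its spectrum is the orbit space $Z/\R=X$ times the dual $\widehat{\Z}=S^1$ of the constant stabilizer, which is precisely the identification $Z^{T}=X\times S^1$ -- together with the fact that $\tilde\phi$ covers $\pi_Z\phi$ and restricts on each $\R$-orbit $\{m\}\times S^1$ to the $S^1$-equivariant homeomorphism $\theta\mapsto\theta\cdot\phi(m)$ onto the fibre $Z_{\pi_Z\phi(m)}$, carrying the basepoint to $\phi(m)$ and hence inducing the identity on the $\widehat{\Z}$-factor. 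Making the homomorphism $C(M\times S^1)$-linear along $f^{*}$ and using that a non-degenerate $*$-homomorphism between algebras with fibre $\Ko$ is fibrewise faithful, one obtains, after the usual Kuiper-type trivialization of the resulting infinite-dimensional multiplicity bundle, a $PU(L^2(S^1))$-equivariant map $M\times S^1\times PU(L^2(S^1))\to P^{T}$ over $f=\pi_Z\phi\times\id$. This is $\tilde\phi\rtimes\R$, and commutativity of the square in the statement is exactly the assertion that it lies over $\pi_Z\phi\times\id$.

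I expect the main obstacle to be the bookkeeping of this last step: matching the spectrum map of $\tilde\phi^{*}\rtimes\R$ with $\pi_Z\phi\times\id$ on the nose, and arranging the Kuiper-type identifications so that the diagram commutes strictly rather than merely up to homotopy -- equivalently, checking that the canonical isomorphism $C(M\times S^1)\rtimes\R\cong C(M\times S^1,\Ko)$ is compatible, under pullback along $\pi_Z\phi\times\id$, with the identification $Z^{T}=X\times S^1$. An alternative, less conceptual but self-contained, route is to write $\tilde\phi\rtimes\R$ down explicitly at the level of the representations $\pi_{(x,\theta)}$, using $\phi(m)\in Z$ as the basepoint of the fibre $Z_{\pi_Z\phi(m)}$ that the construction of those representations requires; continuity and $PU(L^2(S^1))$-equivariance are then immediate and the square commutes by inspection. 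Everything else -- the $\R$-equivariance of $\tilde\phi$ and functoriality of the crossed product -- is formal.
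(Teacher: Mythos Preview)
Your proposal is correct and follows essentially the same approach as the paper: establish $\R$-equivariance of $\tilde\phi$, apply the crossed-product functor to obtain $\tilde\phi^{*}\rtimes\R\colon C(Z)\rtimes\R\to C(M\times S^1)\rtimes\R\cong C(M\times S^1,\Ko(L^2(S^1)))$, and then verify the induced map on bases is $\pi_Z\phi\times\id$. The only difference is emphasis: the paper goes directly to your ``alternative, less conceptual'' route---checking that $(\tilde\phi^{*}\rtimes\R)\pi_{(m,\theta)}$ is unitarily equivalent to $\pi_{(\pi_Z\phi(m),\theta)}$ via a short computation with the explicit representations---rather than invoking the Mackey--Rieffel description of the spectrum.
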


\begin{proof}
As $\tilde{\phi}$ is $\R$-equivariant it induces a $*$-homomorphism
\[\tilde{\phi}^*\rtimes \R:C(Z)\times \R\to C(M\times S^1)\rtimes \R=C(M\times S^1,\Ko(L^2(S^1))).\]
This gives the $PU(L^2(S^1))$-equivariant mapping $\tilde{\phi}\rtimes \R:M\times S^1\times PU(L^2(S^1))\to P^T$. It remains to prove that the mapping on the base is $\pi_Z\phi\times \id$. If we identify $(m,\theta)\in M\times S^1$ with the representative $\pi_{(m,\theta)}$ for the class of $(m,\theta)$ in the spectrum of $C(M\times S^1)\rtimes \R$, we must show that, up to a unitary equivalence, 
\[(\tilde{\phi}^*\rtimes \R)\pi_{(m,\theta)}=\pi_{(\pi_Z(\phi(m)),\theta)},\]
where the right hand side is defined using some choice of trivialization for $Z$ around $x$. The last equation now follows from a straight-forward calculation using this trivialization.
\end{proof}

\begin{deef}[The projective $T$-dual of geometric cycle on circle bundle]
If $(M,E,\phi)$ is a geometric cycle on the total space of a circle bundle $Z\to X$ we define its projective $T$-dual cycle $(M,E,\phi)\rtimes \R$ on $P^T$ by
\[(M,E,\phi)\rtimes \R:=(M\times S^1,E,\tilde{\phi}\rtimes \R),\]
where we identify $E$ with its pullback to $M\times S^1$ and $\tilde{\phi}\rtimes \R:M\times S^1\times PU(L^2(S^1))\to P^T$ denotes the mapping of Proposition \ref{tildephi}.
\end{deef}

\begin{prop}
\label{tdualprop}
For a circle bundle $Z\to X$, $T$-duality produces a well defined monomorphism $\rtimes \R:K_*^{geo}(Z)\to K_*^{proj}(P^T)$ that commutes with assembly mappings. That is, the diagram
\[\begin{CD}
K_*^{geo}(Z)@>\rtimes\R>>K_{*+1}^{proj}(P^T) \\
@V\mu^Z VV @VV\mu^{P^T} V \\
K_*^{an}(Z)@>\alpha_{CT}^{-1}>> K_{*+1}^{an}(P^T) \\
\end{CD}\]
commutes. In particular, for flat $Z$ this mapping is an isomorphism.
\end{prop}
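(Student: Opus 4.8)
The plan is to establish the proposition in three stages: first that $\rtimes\R$ is well defined on $K_*^{geo}(Z)$, then that the displayed square commutes, and finally deduce injectivity in general and bijectivity for flat $Z$ as a consequence of commutativity together with the Connes--Thom isomorphism. For well-definedness one must check that $\rtimes\R$ respects the three elementary moves generating the equivalence relation on geometric cycles (disjoint union/direct sum, bordism, vector bundle modification). Disjoint union/direct sum is immediate since $(M,E,\phi)\rtimes\R=(M\times S^1,E,\tilde\phi\rtimes\R)$ is functorial in $E$ and in $M$. For bordism, if $(W,F,f)$ is a bordism of geometric cycles on $Z$, then $(W\times S^1, F, \tilde f\rtimes\R)$ is a projective cycle with boundary on $P^T$ whose boundary is the projective $T$-dual of $\partial W$; here one uses that $\tilde f\rtimes\R$ restricted to $\partial W\times S^1$ agrees with $\widetilde{f|_{\partial W}}\rtimes\R$, which follows from naturality of the crossed product construction in Proposition~\ref{tildephi}. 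Vector bundle modification commutes with $-\times S^1$ (pulling back $V_0$ along $PU(k)\backslash(M\times S^1)=(PU(k)\backslash M)\times S^1\to PU(k)\backslash M$, noting the $PU(k)$-action on $M\times S^1$ acts only on the first factor), and the Bott bundle and Bott class are compatible with this pullback, so $(M,E,\phi)^{V}\rtimes\R=((M,E,\phi)\rtimes\R)^{V}$ after identifying the relevant sphere bundles.

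Next I would verify commutativity of the square. Unwinding the definitions, $\mu^{P^T}((M,E,\phi)\rtimes\R)=(\tilde\phi\rtimes\R)_*\big([(PU(k)\backslash M)\times S^1]\cap \iota_{M\times S^1}[E]\big)$, while $\mu^Z(M,E,\phi)=\phi_*\big([PU(k)\backslash M]\cap\iota_M[E]\big)$. The key point is that the crossed product by $\R$ implements the Connes--Thom map at the level of the assembly: the fundamental class of $(PU(k)\backslash M)\times S^1$ is the external product of that of $PU(k)\backslash M$ with $[S^1]$, and the $PU(L^2(S^1))$-equivariant map $\tilde\phi\rtimes\R$ is, by construction of $P^T$ as $C(Z)\rtimes\R$, precisely the crossed-product dual of $\phi$. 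Pushing forward along $\tilde\phi\rtimes\R$ and tracing through the identification $K_*^{an}(P^T)=K_{*+1}(C(Z)\rtimes\R)$ via the $C(X)$-linear short exact sequence \eqref{ctsequence}, one obtains $\alpha_{CT}^{-1}$ applied to $\mu^Z(M,E,\phi)$. I would phrase this by first establishing it for the trivial bundle $Z=X\times S^1$ (where the statement reduces to the explicit trivializations constructed before Proposition~\ref{tildephi} and the standard description of Connes--Thom for a trivial flow) and then globalizing by a Mayer--Vietoris / partition-of-unity argument using $C(X)$-linearity, exactly as $T$-duality of circle bundles is usually assembled from local data.

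Finally, the conclusions: since $\mu^Z$ is an isomorphism when $X$ (hence $Z$) is a finite $CW$-complex by Theorem~\ref{analyticassembly}, and $\alpha_{CT}^{-1}$ is an isomorphism, the commuting square forces $\rtimes\R$ to be injective — indeed $\mu^{P^T}\circ(\rtimes\R)=\alpha_{CT}^{-1}\circ\mu^Z$ is injective, so $\rtimes\R$ is. For flat $Z$, the circle bundle is (up to isomorphism) $X\times S^1$ up to a flat, i.e. $c_1(Z)=0$, so the $T$-dual $P^T$ has $\delta(P^T)=0$ by the $T$-duality relation $\pi^T_*\delta(P^T)=c_1(Z)=0$ together with flatness forcing the fibrewise component to vanish; hence $P^T$ carries a genuine (untwisted) structure and $K_*^{proj}(P^T)\cong K_*^{an}(P^T)$ is again an isomorphism by Theorem~\ref{analyticassembly} (now with both the source assembly $\mu^Z$ and target assembly $\mu^{P^T}$ isomorphisms and $\alpha_{CT}^{-1}$ an isomorphism, the remaining arrow $\rtimes\R$ must be an isomorphism too).

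The main obstacle I anticipate is the commutativity of the square — specifically, matching the pushforward $(\tilde\phi\rtimes\R)_*$ on twisted $K$-homology with the Connes--Thom class $\alpha_{CT}^{-1}\in KK^1(C(Z)\rtimes\R,C(Z))$ at the level of fundamental classes and cap products. This requires care because the assembly map involves the fundamental class $[PU(k)\backslash M]$ and the identification $K_*^{an}(P)\cong K_{*+\dim}(C(Y,\mathcal A(P)))$, and one must check that the $S^1$-factor's contribution to the fundamental class is exactly what the Connes--Thom isomorphism sees. I expect this to be handled cleanly by reducing to the product case and invoking the $C(X)$-linearity of \eqref{ctsequence}, but it is the step where the bookkeeping is genuinely substantive rather than formal.
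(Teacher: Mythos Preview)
Your treatment of well-definedness and the deduction of injectivity from commutativity match the paper. The substantive difference is in how commutativity of the square is established.

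You propose to reduce to the case of a trivial circle bundle $Z=X\times S^1$ and then globalize via Mayer--Vietoris using $C(X)$-linearity. The paper avoids this entirely by working at the level of the \emph{cycle} rather than the target. The point is that $M\times S^1$ carries the \emph{trivial} $\R$-action regardless of how nontrivial $Z$ is, so the Connes--Thom isomorphism for $C(M\times S^1)$ is governed by the computation for $S^1$ alone: if $i:M\hookrightarrow M\times S^1$ is the inclusion at a point and $E$ is pulled back from $M$, then $[E]\otimes\alpha_{CT}=i_![E]$ (a $K^*(M)$-linear statement that reduces by naturality to $M=\mathrm{pt}$). Capping with $[M\times S^1]$ and using $[M\times S^1]\cap i_![E]=i_*([M]\cap[E])$, one gets
\[
\mu^{P^T}((M,E,\phi)\rtimes\R)=(\tilde\phi\rtimes\R)_*\circ\alpha_{CT}^{-1}\circ i_*([M]\cap[E]).
\]
Now naturality of the Connes--Thom class under the $\R$-equivariant map $\tilde\phi$ gives $(\tilde\phi\rtimes\R)_*\circ\alpha_{CT}^{-1}=\alpha_{CT}^{-1}\circ\tilde\phi_*$, and since $\tilde\phi\circ i=\phi$ this collapses to $\alpha_{CT}^{-1}\circ\mu^Z(M,E,\phi)$. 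No local-to-global patching is needed. Your route would presumably also work, but the bookkeeping you flag as the main obstacle is precisely what the paper's approach eliminates.

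A minor correction on the flat case: ``flat'' should be read as $c_1(Z)$ torsion, not $c_1(Z)=0$. Your argument that $\pi^T_*\delta(P^T)=c_1(Z)$ forces $\delta(P^T)=0$ is incomplete as stated (the pushforward only controls one K\"unneth component of $\delta(P^T)\in H^3(X\times S^1)$); one also needs that the $H^3(X)$-component vanishes because the original twist on $Z$ was trivial. With $\delta(P^T)$ torsion, Theorem~\ref{analyticassembly} makes $\mu^{P^T}$ an isomorphism and the commuting square finishes the job.
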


\begin{proof}
That $\rtimes \R$ is well defined is clear as it maps disjoint union to disjoint union, a vector bundle modification to a vector bundle modification and a bordism to a bordism. 

There is an isomorphism $K^i(M\times S^1)\cong K^0(M)\oplus K^1(M)$ for $i=0,1$. Under these isomorphisms the Connes-Thom isomorphism is conjugate to a natural $K^*(M)$-linear automorphism of $K^*(M)\otimes \Z^2$. A consequence of this is that if $E$ is the pullback of a vector bundle on $M$, $[E]\otimes \alpha_{CT}=i_![E]$ where $i:M\to M\times S^1$ denotes embedding in the first factor. This fact follows as naturality reduces the calculation to computing the Connes-Thom isomorphism for $S^1$. 

We are now ready to compare $\mu^{P^T}\circ \rtimes \R$ with $\alpha_{CT}^{-1}\circ \mu^Z$. If $(M,E,\phi)$ is a geometric cycle on $Z$, the consideration in the previous paragraph implies 
\begin{align*}
\mu^{P^T}((M,E,\phi) \rtimes \R)&=(\tilde{\phi}\rtimes \R)_*([M\times S^1]\cap[E])=(\tilde{\phi}\rtimes \R)_*\circ \alpha_{CT}^{-1}([M\times S^1]\cap i_![E])=\\
&=(\tilde{\phi}\rtimes \R)_*\circ \alpha_{CT}^{-1}\circ i_*([M]\cap[E])=\alpha_{CT}^{-1}\circ\tilde{\phi}_*\circ  i_*([M]\cap[E])=\\
&=\alpha_{CT}^{-1}\circ\phi_*([M]\cap[E])=\alpha_{CT}^{-1}\circ\mu^Z([M]\cap[E]).
\end{align*}
Here we use the naturality of the Connes-Thom mapping in the second row.
\end{proof}

\begin{cor}
\label{tdualsur}
If $Z\to X$ is a circle bundle with $T$-dual $PU(L^2(S^1))$-bundle $P^T\to X\times S^1$, the projective assembly mapping $\mu:K_*^{proj}(P^T)\to K_*^{an}(P^T)$ is a surjection.
\end{cor}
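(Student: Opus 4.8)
The plan is to obtain this as a formal consequence of Proposition \ref{tdualprop}. Recall that that proposition produces, for the circle bundle $\pi_Z:Z\to X$ with $P=Z$, a commutative square whose top arrow is $\rtimes\R:K_*^{geo}(Z)\to K_{*+1}^{proj}(P^T)$, whose left arrow is the geometric assembly map $\mu^Z$, whose right arrow is the projective assembly map $\mu^{P^T}$, and whose bottom arrow is the Connes--Thom map $\alpha_{CT}^{-1}$. Commutativity gives $\mu^{P^T}\circ(\rtimes\R)=\alpha_{CT}^{-1}\circ\mu^Z$. Hence it suffices to show the right-hand side is surjective: then $\mu^{P^T}$, being the last factor of a surjective composite, is itself surjective.

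Surjectivity of $\alpha_{CT}^{-1}\circ\mu^Z$ rests on two standard facts. First, the Connes--Thom map $\alpha_{CT}^{-1}:K_*^{an}(Z)\xrightarrow{\sim}K_{*+1}^{an}(P^T)$ is an isomorphism, as recalled in the discussion preceding Proposition \ref{tdualprop}. Second, since $X$, and hence the total space $Z$, is a finite $CW$-complex, the classical Baum--Douglas assembly map $\mu^Z:K_*^{geo}(Z)\xrightarrow{\sim}K_*^{an}(Z)$ is an isomorphism \cite{baumdtva, baumequivalence}. Composing, $\alpha_{CT}^{-1}\circ\mu^Z$ is an isomorphism, in particular surjective, which proves the corollary.

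There is essentially no obstacle here: the content is contained entirely in Proposition \ref{tdualprop} and the two cited isomorphisms, and the remaining step is a one-line diagram chase. The only point requiring a little care is the hypothesis under which $\mu^Z$ is an isomorphism, so one should take $X$ to be a finite $CW$-complex, matching the setting of Theorem \ref{analyticassembly}; note that the monomorphism statement for $\rtimes\R$ is \emph{not} used, only commutativity of the square. Finally, since $\pi^T_*\delta(P^T)=c_1(Z)$, the twist of $P^T$ is non-torsion exactly when $c_1(Z)$ has infinite order, so this supplies the promised family of non-torsion examples on which the projective assembly map is onto.
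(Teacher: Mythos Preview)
Your proposal is correct and takes essentially the same approach as the paper: the corollary is stated there without proof, as an immediate consequence of the commutative square in Proposition~\ref{tdualprop} together with the Connes--Thom isomorphism and the fact that the untwisted Baum--Douglas assembly map $\mu^Z$ is an isomorphism. Your added remark that only the commutativity of the square (not injectivity of $\rtimes\R$) is needed, and your caveat about the finiteness hypothesis on $X$, are both apt and make the implicit argument explicit.
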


When $X$ is smooth, this is an instance of a more general result, see Corollary \ref{assemblycorollary}.

\subsection{Clutching construction for projective pseudo-differential operators}
\label{clutching}

The main motivation for introducing projective cycles is to obtain a geometric framework for dealing with the fractional indices of projective pseudo-differential operators. We will return to the subject of fractional indices later in the paper. In this subsection, we will very briefly recall projective pseudo-differential operators and show how an elliptic ditto fits into a projective cycle. The projective pseudo-differential operators can only be defined for a principal $PU(n)$-bundle. The reason is that if the twist is not torsion, there is an obstruction to deforming the associated Azumaya bundle (for more details see \cite{deformationgerbes}). 

To introduce projective pseudo-differential operators over $P$ one needs an extension of $\tilde{\mathcal{A}}(P)$ to a neighborhood of the diagonal $\Delta\subseteq X\times X$ equipped with a multiplication mapping 
\begin{equation}
\label{comp}
\tilde{\mathcal{A}}(P)_{(x,y)}\times\tilde{\mathcal{A}}(P)_{(y,z)}\to \tilde{\mathcal{A}}(P)_{(x,z)},
\end{equation}
for any $x,y,z$ such that all these pair of points lies in this neighborhood defining an associative product whenever it is well defined. Such an extension can be constructed if $P$ is a principal $PU(n)$-bundle, see \cite[Proposition 4]{mms}. For projective bundles $E,F\to P$ the $\Hom$-bundle $\Hom(E,F)\to P$ has central character $0$. Thus, there is an $\mathcal{A}$-linear $\Hom$-bundle $\Hom_\mathcal{A}(E,F)\to X$ that in the same way as above extends to a bundle $\widetilde{\Hom}_{\tilde{\mathcal{A}}}(E,F)$ on a neighborhood of the diagonal in $X\times X$ compatible with the product \eqref{comp}.

A projective pseudo-differential operator is a distribution with values in the extension $\widetilde{\Hom}_{\tilde{\mathcal{A}}}(E,F)$ with a polyhomogeneous expansion near the diagonal and conormal to the diagonal. Composition of two projective pseudo-differential operators is defined using \eqref{comp} if the distributions are supported in small enough neighborhoods of the diagonal. The space of classical projective pseudo-differential operators between two projective bundles $E_1$ and $E_2$ over $P$ is denoted by $\Psi^*(P;E_1,E_2)$, we will in the notation suppress the dependence on the neighborhood and always assume that compositions are defined. 

Let $\Omega\to X$ denote the bundle of densities on $X$ and $\pi_2:X\times X\to X$ projection onto the second coordinate. A smoothing projective operator $T\in\Psi^{-\infty}(P;E_1,E_2)$ has a Schwartz kernel $k_T\in C^\infty_c(\Delta_\epsilon, \widetilde{\Hom}_{\tilde{\mathcal{A}}}(E,F)\otimes \pi_2^*\Omega)$ for an open neighborhood $\Delta_\epsilon$ of the diagonal $\Delta$. If $E=F$, the fiberwise trace induces a bundle mapping $\tra_E:\Hom_\mathcal{A}(E,E)\to 1_\C$, here $1_\C$ denotes the trivial complex line bundle. For $k_T\in C^\infty_c(\Delta_\epsilon, \widetilde{\Hom}_{\tilde{\mathcal{A}}}(E,E)\otimes \pi_2^*\Omega)$, $k_T|_\Delta\in C^\infty(X, \Hom_{\mathcal{A}}(E,E)\otimes \Omega)$ and we arrive at a well defined section $\tra_E(k_T|_\Delta)\in C^\infty(X,\Omega)$. Following \cite[Equation (27)]{mms}, we define the trace functional 
\begin{equation}
\label{tracedefinition}
\tra:\Psi^{-\infty}(P;E,E)\to \C, \quad T\mapsto \int _X\tra_{E}(k_T|_\Delta).
\end{equation}
The nomenclature trace functional derides from that Fubini's theorem implies the identity $\tra \,T_1T_2=\tra \,T_2T_1$ for $T_1\in \Psi^{-\infty}(P;E_1,E_2)$ and $T_2\in \Psi^{-\infty}(P;E_2,E_1)$.

An example of a projective pseudo-differential operator is the projective Dirac operator of an oriented manifold $X$. This is a projective operator over $Pr(X):=Fr(X)\times_{SO(n)}PU(2^m)$-the projective frames on $X$. See more in \cite[Section 7]{mms}.

Let $T^*_\perp P$ denote the pullback of the cotangent bundle $T^*X\to X$ to $P$ and $S^*_\perp P$ denote the pullback of the cosphere bundle $S^*X\to X$ up to $P$. The $PU(n)$-action on $X$ induces the structure of a principal $PU(n)$-bundle on  $S^*_\perp P\to S^*X$. From the definition of a projective pseudo-differential operator (see \cite[Equation (29)]{mms}) it follows that with any $A\in \Psi^m(P;E_1,E_2)$, there is a canonically associated principal symbol 
$$\sigma_m(A)\in C^\infty(S^*_\perp P,\pi_\perp^*\Hom(E_1,E_2))^{SU(n)},$$
where $\pi_\perp:S^*_\perp P\to P$ denotes the projection. A projective pseudo-differential operator $A$ of order $m$ is elliptic if and only if $\sigma_m(A)\in C^\infty(S^*_\perp P,\mathrm{Iso}(\pi_\perp^*E_1,\pi_\perp^*E_2))^{SU(n)}$. If $A$ is elliptic, we can perform a clutching construction to associate a projective cycle with $A$.

Let $\pi:P\to X$ and $\pi_1:\bar{B}^*_\perp P\to P$ denote the projection mappings, here $\bar{B}^*_\perp P$ denotes the closed ball bundle in $T^*_\perp P$. Observe that $T^*P\cong \pi^*T^*X\oplus \mathfrak{pu}(n)^*$, where $\mathfrak{pu}(n)$ denotes the Lie algebra of $PU(n)$, since the $PU(n)$-action on $P$ is free. We have that 
\[T^*\bar{B}^*_\perp P\oplus \mathfrak{pu}(n)\cong T^*(T^*P)|_{\bar{B}^*_\perp P},\]
and this isomorphism produces a $PU(n)$-invariant spin$^c$-structure, coming from the stably almost complex structure on $T^*P$ and $\mathfrak{pu}(n)^*$. 

\begin{deef}
The clutching construction of an elliptic classical projective pseudo-differential operator $A$ is defined as the projective cycle on $P$ given by
\[cl(A):=(S(T^*_\perp P\oplus 1),E_A,\phi),\]
where:
\begin{enumerate}
\item We identify $S(T^*_\perp P\oplus 1)$ with two copies of $\bar{B}^*_\perp P$ glued together along $S^*_\perp P$, and give $S(T^*_\perp P\oplus 1)$ the spin$^c$-structure from this identification. 
\item We define the projective bundle $E_A\to S(T^*_\perp P\oplus 1)$ by gluing together the projective bundle $\pi^*_1E_1\to \bar{B}^*_\perp P$ with the projective bundle $\pi^*_1E_2\to \bar{B}^*_\perp P$ along $S^*_\perp P$ using the $SU(n)$-equivariant isomorphism $\sigma(A):\pi_\perp^*E_1\to \pi_\perp^*E_2$. 
\item We let $\phi:S(T^*_\perp P\oplus 1)\to P$ denote the projection mapping. 
\end{enumerate}
\end{deef}

\begin{prop}
Assume that $P$ is a principal $PU(n)$-bundle over a smooth manifold. The clutching construction produces a well defined mapping
\[c:K^*_{proj}(T_\perp^*P)\to K_*^{proj}(P).\]
\end{prop}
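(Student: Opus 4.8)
The plan is to show that the map $A \mapsto cl(A)$ descends to $K$-theory, i.e., that two elliptic classical projective pseudo-differential operators $A_0, A_1$ with the same class $[\sigma(A_0)] = [\sigma(A_1)] \in K^*_{proj}(T^*_\perp P)$ have bordant clutching cycles, and that $cl$ is additive. The construction $cl(A)$ only depends on the principal symbol $\sigma(A)$, which is an $SU(n)$-equivariant isomorphism of pulled-back projective bundles over the cosphere bundle; so the first reduction is to reinterpret $cl$ as a map defined on projective elliptic complexes over $T^*_\perp P$ rather than on operators. Concretely, a triple $(E_1, E_2, \sigma)$ with $\sigma$ an isomorphism outside a compact set of $T^*_\perp P$ (after a homotopy, outside the open ball bundle) is exactly the data needed to perform the gluing in the definition of $cl(A)$, so we obtain a well-defined map from the monoid of projective elliptic complexes to projective cycles modulo isomorphism. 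The passage from operators to symbols uses the standard fact that every elliptic symbol is the symbol of some elliptic operator and that homotopic elliptic symbols give operators with bordant clutching data; this is routine and parallels the classical clutching construction.

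Next I would verify that $cl$ respects the three relations defining $K^*_{proj}(T^*_\perp P)$: homotopy and degenerate equivalence. Additivity of $cl$ (mapping direct sums of elliptic complexes to disjoint unions, hence to sums in $K^{proj}_*(P)$) follows from the direct sum relation on projective cycles after using the disjoint-union/direct-sum move of the equivalence relation. For degenerate complexes $(E,E,\mathrm{id}_E)$, the clutched projective bundle $E_A$ is simply the pullback of $E$ from $P$ (glued via the identity), so $cl(E,E,\mathrm{id}_E) = (S(T^*_\perp P \oplus 1), \pi^* E, \phi)$ where $\phi$ is the bundle projection; but this is exactly a vector bundle modification of $(P, E, \mathrm{id}_P)$ along $T^*_\perp P$ — modulo the subtlety that $E_A$ differs from $\pi_{M^V}^*E \otimes \pi^*_{Y^{V_0}} Q_{V_0}$ by the Bott bundle, which must be accounted for using the Bott-class form of vector bundle modification (Definition \ref{vebemod}) and the proposition identifying $(M^V,[E^V],\ldots) \sim_{bor} (M,[E],\ldots)^{[V]}$. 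So degenerate complexes land in the trivial class modulo the relations. For homotopy invariance, a homotopy of projective elliptic complexes over $T^*_\perp P \times [0,1]$ produces, by applying the clutching gluing fiberwise over $[0,1]$, a projective cycle with boundary over $P$ whose boundary is the disjoint union of the two endpoint clutching cycles (with matching orientations); this is the desired bordism.

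The main obstacle I expect is the bookkeeping of spin$^c$-structures and orientations in the homotopy/bordism step. The spin$^c$-structure on $S(T^*_\perp P \oplus 1)$ comes, as explained before the definition of $cl$, from the stably-almost-complex structure on $T^*P$ together with $\mathfrak{pu}(n)^*$, and one must check that the analogous construction over $T^*_\perp P \times [0,1]$ restricts correctly to the two endpoints with the $-M'$ convention for the opposite spin$^c$-structure — i.e., that the two copies of $\bar B^*_\perp P$ glued along $S^*_\perp P$ inherit compatible orientations so that the cylinder's boundary is genuinely $cl(A_0) \,\dot\cup\, (-cl(A_1))$. This is the same orientation check that appears in the classical clutching construction, so it should go through, but it requires care that the $PU(n)$-equivariance is preserved throughout: the action on $S(T^*_\perp P \oplus 1)$ is free since the action on $P$ is free, and Lemma \ref{pullbackpushforward} together with the Slice Theorem ensures all quotients remain smooth manifolds. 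A secondary point is checking well-definedness with respect to the choice of operator realizing a given symbol and with respect to the neighborhood of the diagonal, but these are handled exactly as in \cite{mms} and produce homotopic symbols, hence fall under homotopy invariance already established.
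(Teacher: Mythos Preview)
Your overall strategy---reduce to elliptic complexes, then verify the relations defining $K^*_{proj}$---is sound, and your treatment of homotopy invariance (produce a bordism over $[0,1]$) and additivity is fine. The genuine gap is in your handling of degenerate complexes.

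You claim that $cl(E,E,\id_E) = (S(T^*_\perp P\oplus 1),\pi^*E,\phi)$ is ``a vector bundle modification of $(P,E,\id_P)$ along $T^*_\perp P$, modulo the Bott bundle,'' and that this shows it lands in the trivial class. But vector bundle modification is an \emph{equivalence}, not a nullification: if the clutched cycle really were the modification of $(P,E,\id_P)$, you would have shown $[cl(E,E,\id_E)] = [P,E,\id_P]$, which is typically nonzero. The Bott-bundle bookkeeping you invoke does not change this; it only relates the bundle-data and $K$-theory-data forms of vector bundle modification, both of which preserve the class rather than kill it. So the argument as written does not establish triviality.

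The fix is much simpler than what you attempted, and is what the paper does. The sphere bundle $S(T^*_\perp P\oplus 1)$ is literally the boundary of the closed ball bundle $\bar B(T^*_\perp P\oplus 1)$, and both the pulled-back bundle $\pi^*E$ and the projection $\phi$ extend over the ball. Hence
\[
cl(E,E,\id_E) \;=\; \partial\bigl(\bar B(T^*_\perp P\oplus 1),\,\phi^*E,\,\phi\bigr),
\]
a null-bordant projective cycle, and therefore zero in $K^{proj}_*(P)$. No vector bundle modification is needed. This is in fact the same mechanism underlying the proposition you cite (the one following Definition~\ref{vebemod}), where the ``extra'' copies of $\pi^*E$ on the sphere bundle are killed by bounding them with the ball bundle; here the entire cycle is of that extra form.
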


\begin{proof}
It is clear that the clutching construction respects isomorphisms. Furthermore, an elementary projective elliptic complex $(E,E,\id)$ is mapped to the boundary $\partial(B(T^*_\perp P\oplus 1),\phi^*E,\phi)$, where we abuse notation and let $\phi:B(T^*_\perp P\oplus 1)\to P$ denote the projection mapping. 
\end{proof}

\Large
\section{Twisted geometric $K$-homology}
\normalsize
\label{sectiongeometricmodification}

There is also a relationship between projective and twisted geometric $K$-homology. Twisted geometric cycles are defined in the more general setting of a principal $PU(\He)$-bundle $P\to X$ for a separable Hilbert space $\He$. We recall the definition from \cite{wangcycles} of a twisted geometric cycle. We use the notation $BSO(\infty):=\varinjlim BSO(k)$ and $BPU(\He)$ for a classifying space for $PU(\He)$-actions. For topological spaces $X$ and $Y$, we let $[X,Y]$ denote the set of homotopy classes of continuous mappings $X\to Y$. We also let $W_3:BSO(\infty)\to BPU(\He)$ denote the third integral Stiefel-Whitney class\footnote{More concretely, if $V\to X$ is an oriented Riemannian vector bundle, classified by $\nu:X\to BSO(\infty)$ the homotopy class $[W_3\circ \nu]\in [X,BPU(\He)]=H^3(X,\Z)$ is the third integral Stiefel-Whitney class of $V$.}.

\begin{deef}[Definition 3.1 and 6.1, \cite{wangcycles}]
A twisted geometric cycle is a quintuple $(M_0,E_0,\phi_0,\nu,\eta)$, where 
\begin{enumerate}
\item $M_0$ is a closed manifold with stabilized normal bundle $\nu:M_0\to BSO(\infty)$; 
\item $E_0\to M_0$ is a vector bundle; 
\item $\phi_0:M_0\to X$ is a continuous mapping such that 
\[\phi_0^*\delta(P)+W_3(\nu)=0\in H^3(M_0,\Z)=[M_0,BPU(\He)]\]
through an explicit homotopy $\eta:[0,1]\times M_0\to BPU(\He)$. 
\end{enumerate}
\end{deef}

The twisted geometric $K$-homology $K_*^{geo}(P)$ of a principal $PU(\He)$-bundle $P$ is defined as the set of isomorphism classes of twisted geometric cycles modulo direct sum/disjoint union, vector bundle modification and bordism. For details, see \cite[Section 6]{wangcycles}. The set $K_*^{geo}(P)$ forms an abelian group under disjoint union and is $\Z/2\Z$-graded by the dimension of the manifold in the cycle modulo $2$. 

\begin{remark}
Also for twisted geometric cycles, it is possible to work with cycles with $K$-theory data. In the same way as in Section \ref{sectionprojectivekhomology}, the model using bundle data is equivalent to the that defined using $K$-theory data modulo bordisms.
\end{remark}

\begin{remark}
Recall the construction of $Pr(M_0)$ from an orientation in Subsection \ref{pushforsubsec}. A Morita equivalent representative for $Pr(M_0)^{op}$ is constructed from $\nu$; we construct the principal bundle $Pr(\nu)$ as follows.  Representing $\nu$ as a mapping $M_0\to BSO(k)$ with $k$ even, we obtain an $SO(k)$-bundle $Fr(\nu)\to M_0$ of frames on the stable normal bundle. The principal bundle $Pr(\nu)\to M_0$ is the fiber product of the $SO(k)$-bundle $Fr(\nu)\to M_0$ over its projective spin representation with $PU(2^k)$. The stabilization of $Pr(\nu)$ is denoted by $Pr_s(\nu)$ (i.e., $Pr_s(\nu):=Pr(\nu)\times_{PU(2^k)}PU(\He)$).
\end{remark}

\begin{prop}
The homotopy $\eta$ induces a $PU(\He)$-equivariant mapping 
\[\hat{\eta}^0:Pr_s(\nu)\to \phi_0^*P,\]
which in turn induces a stable morphism $\hat{\eta}:Pr(\nu)\to P$. 
\end{prop}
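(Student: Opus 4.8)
The plan is to unwind the definitions involved and read off the claimed maps directly. Recall that $\eta:[0,1]\times M_0\to BPU(\He)$ is an explicit homotopy between the two classifying maps $\phi_0^*\delta(P)$ and $-W_3(\nu)$, i.e., between the map classifying the $PU(\He)$-bundle $\phi_0^*P\to M_0$ and the map classifying the Dixmier-Douady invariant $\delta(Pr_s(\nu))=W_3(\nu)$ of $Pr_s(\nu)\to M_0$ (the sign accounts for passing to the opposite bundle). First I would fix a model in which $BPU(\He)$ carries the universal $PU(\He)$-bundle $EPU(\He)\to BPU(\He)$, so that a map $M_0\to BPU(\He)$ together with a homotopy corresponds to a bundle together with an isomorphism to the pulled-back bundle. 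Since $W_3(\nu):BSO(\infty)\to BPU(\He)$ is, by construction, the classifying map of $Pr_s$ viewed as a functor (the projective spin representation composed with stabilization), the composite $W_3\circ\nu:M_0\to BPU(\He)$ is a classifying map for $Pr_s(\nu)\to M_0$; more precisely, for the opposite bundle, so that $\eta$ is a homotopy from the classifying map of $\phi_0^*P$ to the classifying map of $\overline{Pr_s(\nu)}$ (equivalently, $Pr_s(\nu)^{\mathrm{op}}$).

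Next I would invoke the covering homotopy property for principal $PU(\He)$-bundles: a homotopy $\eta_t$ of classifying maps lifts to an isomorphism of the bundles classified by $\eta_0$ and $\eta_1$. Concretely, pulling back the universal bundle along $\eta:[0,1]\times M_0\to BPU(\He)$ gives a $PU(\He)$-bundle over $[0,1]\times M_0$ whose restriction to $\{0\}\times M_0$ is (isomorphic to) $\phi_0^*P$ and whose restriction to $\{1\}\times M_0$ is (isomorphic to) $Pr_s(\nu)^{\mathrm{op}}$; since $[0,1]\times M_0$ deformation retracts onto $\{1\}\times M_0$, the bundle is pulled back from there, and this produces the desired $PU(\He)$-equivariant bundle isomorphism. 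After identifying $Pr_s(\nu)^{\mathrm{op}}$-equivariant maps into $\phi_0^*P$ with $Pr_s(\nu)$-equivariant maps into $\phi_0^*P$ via the standard opposite-bundle dictionary (this is exactly the Morita-type bookkeeping already used in Subsection \ref{pushforsubsec} and around Proposition \ref{spincstruazu}), we obtain $\hat\eta^0:Pr_s(\nu)\to\phi_0^*P$. Composing with the canonical map $\phi_0^*P\to P$ covering $\phi_0:M_0\to X$ and restricting from the stabilization $Pr_s(\nu)$ back to $Pr(\nu)$ along the inclusion $PU(2^k)\hookrightarrow PU(\He)$ yields a stable morphism $\hat\eta:Pr(\nu)\to P$ in the sense of the definition of stable morphism, since a stable morphism is by definition a $PU(\He)$-equivariant map after stabilizing the source.

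The main obstacle is purely organizational rather than deep: one must be careful with the orientation and opposite-bundle conventions (the sign in $\phi_0^*\delta(P)+W_3(\nu)=0$ versus $\phi_0^*\delta(P)=-W_3(\nu)$, and whether $Pr(\nu)$ models $Pr(M_0)^{\mathrm{op}}$ or $Pr(M_0)$), and to ensure that the isomorphism extracted from the covering homotopy is natural enough that the resulting stable morphism is well defined up to the homotopy data recorded by $\eta$ — compare Proposition \ref{hompropstab}, which says precisely that a stable morphism carries such homotopy information. I would also note that one should verify the construction does not depend on the auxiliary choice of model for $BPU(\He)$ and $EPU(\He)$: any two choices are connected by a contractible family, so the resulting stable morphisms agree up to the equivalence used on stable morphisms. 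Beyond these conventions, each step is a routine application of classifying-space theory for principal bundles with structure group $PU(\He)$, together with the bundle-theoretic translations already established earlier in the paper.
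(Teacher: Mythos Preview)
Your proposal is correct and follows essentially the same approach as the paper: obtain $\hat\eta^0$ from the homotopy $\eta$ via the covering homotopy property for principal $PU(\He)$-bundles, then define $\hat\eta$ as the composition $Pr_s(\nu)\xrightarrow{\hat\eta^0}\phi_0^*P\to P$ with the canonical projection. The paper's own proof is terser, deferring the first step to \cite[Section 3]{wangcycles} rather than spelling out the classifying-space argument, but the content is the same; your added care with the sign/opposite-bundle conventions is appropriate given the condition $\phi_0^*\delta(P)+W_3(\nu)=0$ and the remark that $Pr(\nu)$ models $Pr(M_0)^{\mathrm{op}}$.
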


\begin{proof}
As $\eta$ is a homotopy from $\phi_0^*\delta(P)$ to $W_3(\nu)$ it induces a $PU(\He)$-equivariant mapping $\hat{\eta}^0:Pr_s(\nu)\to \phi_0^*P$, see \cite[Section 3]{wangcycles}. The stable equivariant morphism $\hat{\eta}:Pr(\nu)\to P$ is defined by means of the composition 
\[Pr_s(\nu)\xrightarrow{\hat{\eta}^0} \phi_0^*P\to P\] 
where the latter is projection onto the first coordinate when writing $\phi_0^*P=P\times_X M_0$.
\end{proof}

For a closed oriented manifold $M_0$, we let 
$$[M_0]\in K^{\dim M_0}(C(M_0,\C\ell(M_0)))=K_{\dim M_0}^{an}(Pr(\nu))$$ 
denote fundamental class. The geometric assembly mapping $\mu_{geo}:K_*^{geo}(P)\to K_*^{an}(P)$ is defined by
\[\mu_{geo}(M_0,E_0,\phi_0,\nu,\eta)=\hat{\eta}_*([M_0]\cap[E_0]).\]
By \cite[Theorem 6.4]{wangcycles}, the geometric assembly mapping is an isomorphism for any principal $PU(\He)$-bundle over a smooth manifold $X$. The requirement that $X$ is smooth is crucial for the construction of geometric cycles in the proof of \cite[Theorem 6.4]{wangcycles}. 

\subsection{The geometric modification mapping}

We now turn to comparing twisted geometric $K$-homology with projective $K$-homology. We construct a natural mapping $\delta:K_*^{geo}(P)\to K_*^{proj}(P)$ that we call the geometric modification mapping. Recall the notation $\iota_P$ from Theorem \ref{projectivevstwisted} and $\lambda_P$ from Proposition \ref{spincstruazu}.

\begin{deef}
\label{geomoddefe}
Let $(M_0,E_0,\phi_0,\nu,\eta)$ be an even-dimensional twisted geometric $K$-homology cycle, its geometric modification is the projective cycle $(M_\nu,[E_\nu],\hat{\eta}\circ \pi_\nu)$ with $K$-theory data defined as follows:
\begin{enumerate}
\item We can represent $\nu$ by a mapping $\nu:M_0\to BSO(k)$ (we abuse the notation by reusing the letter $\nu$). Pull back $\nu$ to a mapping $\tilde{\nu}:S(T^*M_0\oplus 1_\R)\to BSO(k)$ and set
\[M_\nu:=Pr(\tilde{\nu})=\pi^*Pr(\nu),\]
where $\pi:S(T^*M_0\oplus 1_\R)\to M_0$ denotes the projection mapping and let $\pi_\nu:M_\nu\to Pr(\nu)$ denote the projection.
\item Equipp the total space $M_\nu$ of the $PU(2^k)$-principal bundle $M_\nu\to S(T^*M_0\oplus 1_\R)$ with the spin$^c$-structure induced from $S(T^*M_0\oplus 1_\R)$.
\item Let $s:M_0\to S(T^*M_0\oplus 1)$ denote the south pole map\footnote{By construction, $s^*M_\nu=Pr(\nu)$.} and define $[E_\nu]\in K^0_{proj}(M_\nu)$ as the image of $[E]\in K^0(M_0)$ under the mappings
\begin{align*}
K^0(M_0)\xrightarrow{s_!} &K_0(C(S(T^*M_0\oplus 1),\mathcal{A}(\tilde{M}_\nu)))\\
&\xrightarrow{(\lambda_P)_*} K_0(C(S(T^*M_0\oplus 1),\mathcal{A}(M_\nu)))\xrightarrow{\iota_{M_\nu}} K^0_{proj}(M_\nu).
\end{align*}
where the push forward $s_!$ as well as the mapping $\lambda_P$ is defined using the spin$^c$-structure on $S(T^*M_0\oplus 1)$ and the stabilized normal bundle $\nu$ (see \cite[Theorem 4.1]{thomcareywang}).
\end{enumerate}

If $(M_0,E_0,\phi_0,\nu,\eta)$ is an odd-dimensional twisted geometric $K$-homology cycle, its geometric modification is the projective cycle $(M_\nu,[E_\nu],\hat{\eta}\circ \pi_\nu)$ with $K$-theory data with 
\begin{enumerate}
\item $M_\nu:= Pr(\tilde{\nu})\times S^1\times S^1$ is equipped with the spin$^c$-structure defined analogously to above;
\item $\pi_\nu:M_\nu\to Pr(\nu)$ denotes the projection;
\item $[E_\nu]\in K^0_{proj}(M_\nu)$ is defined, using the mapping 
$$s\times\id_{S^1}:M_0\times S^1\to S(T^*M_0\oplus 1)\times S^1,$$ 
as the image of $[E_0\times S^1]\in K^0(M_0\times S^1)$ under the mappings
\begin{align*}
K^0(M_0\times S^1)&\xrightarrow{(s\times \id_{S^1})_!} K_1(C(S(T^*M_0\oplus 1)\times S^1,\mathcal{A}(\widetilde{Pr}(\tilde{\nu}))))\\
&\to K_0(C(S(T^*M_0\oplus 1)\times S^1\times S^1,\mathcal{A}(\tilde{M}_\nu)))\\
&\xrightarrow{(\lambda_{M_\nu})_*} K_0(C(S(T^*M_0\oplus 1)\times S^1\times S^1,\mathcal{A}(M_\nu)))\xrightarrow{\iota_{M_\nu}} K^0_{proj}(M_\nu).
\end{align*}
where the second mapping is defined from Bott periodicity and the other mappings are defined as in the even-dimensional case.
\end{enumerate}
\end{deef}

The notation $[E_\nu]$ for the $K$-theory class constructed in Definition \ref{geomoddefe} is only figurative, in general there is no projective vector bundle $E_\nu\to M_\nu$ representing $[E_\nu]$.

\begin{thm}
\label{geometricmodification}
If $P\to X$ is a principal $PU(\He)$-bundle, the geometric modification mapping induces a well defined mapping $\delta:K^{geo}_*(P)\to K_*^{proj}(P)$ that fits into the commutative diagram
\begin{equation}
\label{geoprojan}
\xymatrix@C=2.4em@R=3.71em{
& K_*^{geo}(P)\ar[dr]^{\mu_{geo}}\ar[dl]_{\delta}&\\
K_*^{proj}(P)\ar[rr]^{\mu}  && K_*^{an}(P)}.
\end{equation}
\end{thm}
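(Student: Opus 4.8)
The plan is to verify the three assertions in sequence: well-definedness of $\delta$ on cycles, invariance of $\delta$ under the three elementary equivalences, and commutativity of the diagram \eqref{geoprojan}. First I would check that the geometric modification of a twisted geometric cycle $(M_0,E_0,\phi_0,\nu,\eta)$ is indeed a legitimate projective cycle in the sense of Definition \ref{projectivegeometriccycles}: the manifold $M_\nu=Pr(\tilde\nu)$ (resp.\ $Pr(\tilde\nu)\times S^1\times S^1$ in the odd case) carries a free smooth spin$^c$-preserving $PU(2^k)$-action by construction, the class $[E_\nu]\in K^0_{proj}(M_\nu)$ is well defined because $\iota_{M_\nu}$ is an isomorphism by Theorem \ref{projectivevstwisted}, and $\hat\eta\circ\pi_\nu$ is a stable morphism $M_\nu\to P$ since $\hat\eta$ is one. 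The only subtlety is checking that the constructions are independent of the auxiliary choices (the representative $\nu:M_0\to BSO(k)$ with $k$ even, the trivializations used in $s_!$ and $\lambda_P$); this follows from the standard stability arguments, since changing $k$ to $k+2$ corresponds to a vector bundle modification along a trivial $\R^2$, and the relevant push-forwards are natural by Lemma \ref{pullbackpushforward}.

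Next I would prove that $\delta$ descends to $K$-homology. Disjoint union/direct sum is immediate since all the constructions ($Pr(\tilde\nu)$, $s_!$, $\lambda_P$, $\iota$) are additive. For vector bundle modification, the key point is that vector bundle modifying $(M_0,E_0,\phi_0,\nu,\eta)$ along a spin$^c$-bundle $V_0\to M_0$ and then applying $\delta$ agrees, modulo bordism and vector bundle modification in $K_*^{proj}(P)$, with applying $\delta$ first and then vector bundle modifying by the pullback of $V_0$ to $Pr(\nu)$; here one uses Lemma \ref{embedding} and the fact (Section \ref{pushforsubsec}) that the push-forward $s_!$ for an embedding factors through a Thom isomorphism, together with naturality of Thom classes under pullback. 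For bordism, a bordism $(W_0,F_0,\psi_0,\nu_W,\eta_W)$ of twisted geometric cycles produces, by the same functorial recipe applied fiberwise over $W_0$, a projective cycle with boundary whose boundary is exactly $\delta$ of the boundary; the spin$^c$-structures match because the two-out-of-three lemma is compatible with restriction to the boundary, and the push-forward $s_!$ and the Morita isomorphism $\lambda_P$ are defined uniformly over $W_0$.

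For commutativity of \eqref{geoprojan}, I would compute $\mu\circ\delta(M_0,E_0,\phi_0,\nu,\eta)$ directly from the definitions. By definition $\mu(M_\nu,[E_\nu],\hat\eta\circ\pi_\nu)=(\hat\eta\circ\pi_\nu)_*\big([M_\nu/PU(2^k)]\cap\iota_{M_\nu}[E_\nu]\big)$, and $M_\nu/PU(2^k)=S(T^*M_0\oplus 1_\R)$ (resp.\ with the extra $S^1\times S^1$), whose fundamental class caps with the twisted Dirac class to undo the push-forward $s_!$ up to the spin$^c$-Morita isomorphism; the net effect of $s_!$ followed by capping with $[S(T^*M_0\oplus1)]$ is, by naturality of Poincar\'e duality (Lemma \ref{pullbackpushforward}) and the fact that $\pi\circ s=\id_{M_0}$, precisely capping with $[M_0]$. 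After pushing forward along $\hat\eta$ and comparing with the definition $\mu_{geo}(M_0,E_0,\phi_0,\nu,\eta)=\hat\eta_*([M_0]\cap[E_0])$ from Section \ref{sectiongeometricmodification}, the two agree. The odd-dimensional case is handled the same way, with the two extra circle factors accounting for the Bott periodicity shift used to land back in degree $0$ before applying $\iota_{M_\nu}$; one checks this reduces, via naturality, to the computation of the Connes--Thom/Bott map for $S^1$.

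The main obstacle I anticipate is the bookkeeping in the bordism-invariance step: one must carry the spin$^c$-structures, the Morita equivalence $\lambda_P$, and the non-algebra-valued push-forward $s_!$ uniformly across a manifold with boundary and verify they restrict correctly, which is where the subtle difference between our push-forward (Definition \ref{push}) and the one in \cite{thomcareywang} — the factor $\Psi_f$ recording the $PU(\He)$-bundle isomorphism — has to be tracked carefully so that the boundary cycle is genuinely $\delta$ of the original boundary and not merely isomorphic to it up to an uncontrolled stabilization.
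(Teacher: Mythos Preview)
Your argument for the even-dimensional commutativity and for invariance under the three relations is essentially the paper's argument, and those parts are fine. The gap is in the odd-dimensional commutativity step, which you dispose of in one sentence (``handled the same way \dots reduces, via naturality, to the computation of the Connes--Thom/Bott map for $S^1$''). That is not enough, and the paper's proof shows why.

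When you unwind $\mu\circ\delta$ in the odd case you are capping $[S(T^*M_0\oplus1)\times S^1\times S^1]$ against $\iota_{M_\nu}\circ\lambda_P\circ b\circ(s\times\id_{S^1})_![E_0\times S^1]$. The Bott map $b$ used in Definition \ref{geomoddefe} is the composite $K^1\cong K^0(\cdot\times\R)\hookrightarrow K^0(\cdot\times S^1)$, and on the level of bundles this produces, after the cap product, the class $[\pi_M^*E_0\otimes\pi_{S^1\times S^1}^*H]-[\pi_M^*E_0]$ on $M_0\times S^1\times S^1$, where $H\to S^1\times S^1$ is the Hopf bundle. So $\mu\circ\delta$ of the odd cycle is $\mu_{geo}$ of a \emph{difference} of two even-dimensional twisted geometric cycles on $M_0\times S^1\times S^1$, not directly $\mu_{geo}$ of the original odd cycle. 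The second summand is visibly null-bordant (fill in one $S^1$ by a disk), but to identify the first summand with the class of $(M_0,E_0,\phi_0,\nu,\eta)$ you need an explicit bordism
\[
(M_0\times S^1\times S^1,\pi_M^*E_0\otimes\pi_{S^1\times S^1}^*H,\ldots)\ \sim_{bor}\ (M_0,E_0,\phi_0,\nu,\eta)^{1_\C}.
\]
The paper constructs this bordism by hand: one removes a small ball $B_\epsilon$ from $S^1\times\bar D$ and extends $H$ to a line bundle $\bar H$ on the complement whose restriction to $\partial B_\epsilon\cong S^2$ is the Bott bundle; then $M_0\times((S^1\times\bar D)\setminus B_\epsilon)$ with $\tilde\pi_M^*E_0\otimes\tilde\pi^*\bar H$ is the required bordism. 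This is genuine geometric input --- it is the cycle-level incarnation of the statement that the Hopf bundle on $T^2$ and the Bott bundle on $S^2$ represent the same reduced $K$-theory generator --- and it does not fall out of naturality or a one-variable Bott computation. Your proposal needs to supply this bordism (or an equivalent one) to close the argument.
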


\begin{proof}
Let us start by proving that $\delta$ respects the relations in $K_*^{geo}(P)$. It is clear that $\delta$ respects direct sum/disjoint union. If $(W_0,E_0,\phi_0,\nu,\eta)$ is a twisted geometric cycle with boundary, it is clear that we can construct a projective cycle with boundary $(W_\nu, [E_\nu], \hat{\eta}\circ \pi_\nu)$ in the same manner as above and $\delta\partial (W_0,E_0,\phi_0,\nu,\eta)=\partial(W_\nu, [E_\nu], \hat{\eta}\circ \pi_{\nu})$ is null bordant. 

Similarly, it is standard to show that if $V\to M_0$ is an even-dimensional spin$^c$-vector bundle and letting $\pi_V:M^V:=S(V\oplus 1)\to M_0$, $\tilde{\pi}:M_\nu\to M_0$ and $\tilde{\pi}_V:(M_\nu)^{\pi^* V}:=S(\pi^*V\oplus 1)\to M_\nu$ denote the projections, there is a bordism
\begin{align*}
\delta(M^V,E_0\otimes Q_V,\phi_0\circ \pi_V,\nu_V,\eta_V)&\sim_{bor} ((M_\nu)^{\pi^*V}, [E_\nu]\otimes [\beta_{\pi^*V}], \hat{\eta}\circ\pi_{\nu}\circ \tilde{\pi}_V)\\
&=(M_\nu,[E_\nu],\hat{\eta}\circ \pi_\nu)^{[\tilde{\pi}^*V]}.
\end{align*}

Now we turn to proving that the diagram commutes. If $(M_0,E_0,\phi_0,\nu,\eta)$ is an even-dimensional twisted geometric cycle, we have that 
\begin{align*}
\mu\circ\delta[M_0,E_0,\phi_0,\nu,\eta]&=(\hat{\eta}\circ \pi_{\nu})_{*}\left([S(T^*M_0\oplus 1)]\cap \left(\iota_{M_\nu}\circ \lambda_P\circ s_![E_0]\right)\right)=\\
&=\hat{\eta}_{*}([M_0]\cap [E_0])=\mu_{geo}[M_0,E_0,\phi_0,\nu,\eta].
\end{align*}

The proof for odd-dimensional twisted geometric cycles $(M_0,E_0,\phi_0,\nu,\eta)$ is more complicated. We denote the mapping
\begin{align*}
K_1(C(S(T^*M_0\oplus 1)\times S^1,\mathcal{A}(\widetilde{Pr}(\tilde{\nu}))))\xrightarrow{\sim} K_0(C(S(T^*M_0\oplus 1)\times S^1\times \R,\mathcal{A}(\tilde{M}_\nu|)))\\
\subseteq K_0(C(S(T^*M_0\oplus 1)\times S^1\times S^1,\mathcal{A}(\tilde{M}_\nu))),
\end{align*}
defined using Bott periodicity (see Definition \ref{geomoddefe}) by $b$. We let $H\to S^1\times S^1$ denote the Hopf bundle; the two facts we use about the Hopf bundle are:
\begin{enumerate}
\item The class $[H]-[1_\C]$ is the image of the generator of $K^1(S^1)$ under the mapping $K^1(S^1)\cong K^0(S^1\times \R)\subseteq K^0(S^1\times S^1)$. 
\item Let $\bar{D}$ denote the closed unit disk and identify $S^1=\partial D$. There is a small open ball $B_\epsilon \subseteq S^1\times \bar{D}$ and a line bundle $\bar{H}\to (S^1\times \bar{D})\setminus B_\epsilon$, defined by gluing together the trivial line bundle $\C\times [0,1]\times \bar{D}$ in a suitable fashion, such that $\bar{H}|_{S^1\times S^1}=H$ and $\bar{H}_{\partial B_\epsilon}$ is the Bott bundle on $S^2=\partial B_\epsilon$. 
\end{enumerate} 

Let $\pi_M:M\times S^1\times S^1\to M$ and $\pi_{S^1\times S^1}:M\times S^1\times S^1\to S^1\times S^1$ denote the projections. We arrive at the identities, 
\begin{align*}
\mu\circ\delta&[M_0,E_0,\phi_0,\nu,\eta]\\
&=(\hat{\eta}\circ \pi_{\nu})_{*}\left([S(T^*M_0\oplus 1)\times S^1\times S^1]\cap \left(\iota_{M_\nu}\circ \lambda_P\circ b\circ (s\times \id_{S^1})_![E_0\times S^1]\right)\right)\\
&=\hat{\eta}_{*}\left([M_0\times S^1\times S^1]\cap \left([\pi_M^*E_0\otimes \pi_{S^1\times S^1}^*H]-\pi_M^*[E_0])\right)\right)\\
&=\mu_{geo}[M_0\times S^1\times S^1,\pi_M^*E_0\otimes \pi_{S^1\times S^1}^*H,\phi_0\circ \pi_M,\nu\circ \pi_M,\eta\circ \pi_M]\\
&\qquad\qquad\qquad -\mu_{geo}[M_0\times S^1\times S^1,\pi_M^*E_0,\phi_0\circ \pi_M,\nu\circ \pi_M,\eta\circ \pi_M].
\end{align*}
Since the cycle $(M_0\times S^1\times S^1,\pi_M^*E_0,\phi_0\circ \pi_M,\nu\circ \pi_M,\eta\circ \pi_M)$ is null bordant, the proof is complete once providing a bordism 
$$(M_0\times S^1\times S^1,\pi_M^*E_0\boxtimes \pi_{S^1\times S^1}^*H,\phi_0\circ \pi_M,\nu\circ \pi_M,\eta\circ \pi_M)\sim_{bor} (M_0,E_0,\phi_0,\nu,\eta)^{1_\C},$$
where $1_\C\to M_0$ is the trivial complex line bundle. We let $\tilde{\pi}_M:M\times(S^1\times D)\setminus B_\epsilon\to M$ and $\tilde{\pi}_{S^1\times D}:M\times (S^1\times D)\setminus B_\epsilon\to (S^1\times D)\setminus B_\epsilon$ denote the projections. The bordism is provided by
$$(M_0\times (S^1\times D)\setminus B_\epsilon,\tilde{\pi}_M^*E_0\otimes \tilde{\pi}_{S^1\times D}^*\bar{H},\phi_0\circ \tilde{\pi}_M,\nu\circ \tilde{\pi}_M,\eta\circ \tilde{\pi}_M)$$

\end{proof}

\begin{remark}
In the extreme case $\He=0$, so $P=X$, the interested reader can construct an identification of both the twisted geometric $K$-homology and projective $K$-homology with geometric $K$-homology as defined by Baum-Douglas \cite{baumdtva}. After implementing a bordism\footnote{This bordism is only needed for the odd cycles and is constructed from the bordism $(S^1\times S^1,[H])\sim_{bor} (pt,\C)^{1_\C}$ considered in the proof of Theorem \ref{geometricmodification}.}, the geometric modification mapping coincides with the identity after these identifications.
\end{remark}

\begin{cor}
\label{assemblycorollary}
Let $P\to X$ be a principal $PU(\He)$-bundle.
\begin{enumerate}
\item When $\He=\C^n$ and $X$ is a finite $CW$-complex, $\mu_{geo}$ is an isomorphism if and only if $\delta:K^{geo}_*(P)\to K_*^{proj}(P)$ is an isomorphism. 
\item The geometric modification mapping $\delta:K^{geo}_*(P)\to K_*^{proj}(P)$ is an injection if and only if $\mu_{geo}$ is an injection.
\item The projective assembly mapping $\mu:K_*^{proj}(P)\to K_*^{an}(P)$ is a surjection if and only if $\mu_{geo}$ is a surjection.
\end{enumerate}
Furthermore, if $X$ is a smooth manifold, $\mu$ is surjective and $\delta$ is injective.
\end{cor}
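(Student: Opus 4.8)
The plan is to run a short diagram chase off the commutative triangle $\mu\circ\delta=\mu_{geo}$ furnished by Theorem \ref{geometricmodification}, fed by the two isomorphism theorems already available: $\mu$ is an isomorphism when $\He=\C^n$ and $X$ is a finite $CW$-complex (Theorem \ref{analyticassembly}), and $\mu_{geo}$ is an isomorphism when $X$ is a smooth manifold (\cite[Theorem 6.4]{wangcycles}). Part (1) then falls out at once: under its hypotheses $\mu$ is invertible, so $\delta=\mu^{-1}\circ\mu_{geo}$ and $\delta$ is an isomorphism precisely when $\mu_{geo}$ is. The ``furthermore'' clause is just as direct: for $X$ a smooth manifold $\mu_{geo}$ is an isomorphism, so from $\mu\circ\delta=\mu_{geo}$ one reads that $\delta$ is injective (if $\delta x=0$ then $\mu_{geo}x=0$, hence $x=0$) and that $\mu$ is surjective (its image contains $\im\mu_{geo}=K_*^{an}(P)$).

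For (2) and (3) one implication of each is the same formal observation: $\mu_{geo}$ injective forces $\delta$ injective, and $\mu_{geo}$ surjective forces $\mu$ surjective. The converse implications I would deduce from two structural facts about the geometric modification mapping, both proved by vector bundle modification (Definition \ref{vebemod}) and bordism: (i) $\mu$ is injective on the subgroup $\im\delta\subseteq K_*^{proj}(P)$, and (ii) $K_*^{proj}(P)=\im\delta+\ker\mu$, i.e.\ every projective cycle differs from the geometric modification of some twisted geometric cycle by a class that assembles to $0$. Indeed (i) gives $\ker\mu_{geo}=\delta^{-1}(\ker\mu)=\delta^{-1}(\im\delta\cap\ker\mu)=\delta^{-1}(0)=\ker\delta$, so $\delta$ is injective iff $\mu_{geo}$ is, which is (2); and (ii) gives $\im\mu_{geo}=\mu(\im\delta)=\mu(\im\delta+\ker\mu)=\mu(K_*^{proj}(P))=\im\mu$, so $\mu$ is surjective iff $\mu_{geo}$ is, which is (3).

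The real work is thus (i) and (ii), and my plan for both is to reduce an arbitrary projective cycle to the rigid shape produced by Definition \ref{geomoddefe}. Given a projective cycle $(M,E,\phi)$, set $M_0:=PU(k)\backslash M$ and fix an embedding of $M_0$ into a Euclidean space with stabilized normal bundle $\nu$; Proposition \ref{hompropstab} extracts from $\phi$ a map $\phi_0\colon M_0\to X$ together with an explicit homotopy $\eta$ trivializing $\phi_0^*\delta(P)+W_3(\nu)$, while $\iota_M[E]$ and the twisted push-forwards of Subsection \ref{pushforsubsec} produce an ordinary vector bundle $E_0\to M_0$, so that $(M_0,E_0,\phi_0,\nu,\eta)$ is a twisted geometric cycle. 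One then applies a sequence of vector bundle modifications to $(M,E,\phi)$ --- in particular along the pulled-back cotangent bundle of $M_0$ --- and a bordism, and checks that the outcome agrees with $\delta[M_0,E_0,\phi_0,\nu,\eta]$ up to a class assembling to $0$. The main obstacle is precisely this last identification: matching a general spin$^c$ $PU(k)$-manifold with the ``pulled-back projective frame bundle over a cotangent sphere bundle'' form of $M_\nu$, and matching the south-pole push-forwards on the $K$-theory data, requires careful bookkeeping, and one must verify that the Dixmier-Douady data line up on the nose after the modifications rather than merely up to the ambiguity intrinsic to $PU(\infty)$-isomorphism.
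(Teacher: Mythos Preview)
Your handling of part (1) and the ``furthermore'' clause matches the paper exactly: both follow at once from $\mu\circ\delta=\mu_{geo}$ together with Theorem \ref{analyticassembly} (for (1)) and \cite[Theorem 6.4]{wangcycles} (for the final sentence).

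For parts (2) and (3) the paper's entire argument is the single clause ``as $\mu\circ\delta=\mu_{geo}$, $\delta$ is injective if and only if $\mu_{geo}$ is and $\mu$ surjective if and only if $\mu_{geo}$ is.'' Nothing beyond the factorization is invoked, so the one-direction implications you draw ($\mu_{geo}$ injective $\Rightarrow$ $\delta$ injective; $\mu_{geo}$ surjective $\Rightarrow$ $\mu$ surjective) already reproduce everything the paper actually justifies. Your attempt to supply the converse implications via the structural facts (i) and (ii) goes well beyond what the paper does.

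That attempt, however, has a genuine gap. Your reduction of a projective cycle $(M,E,\phi)$ to a twisted geometric cycle requires $\phi_0^*\delta(P)+W_3(\nu)=0$ in $H^3(M_0,\Z)$. Proposition \ref{hompropstab} only gives $\phi_0^*\delta(P)=\delta(M)$, the Dixmier--Douady class of the principal $PU(k)$-bundle $M\to M_0$; and since $M_0=PU(k)\backslash M$ inherits a spin$^c$-structure one has $W_3(\nu)=0$, so the needed condition becomes $\delta(M)=0$. There is no reason for this to hold for a general projective cycle --- the whole point of the model is to allow nontrivial torsion $PU(k)$-bundles on the cycle side --- and correspondingly there is no way to extract an \emph{ordinary} vector bundle $E_0$ on $M_0$ from the genuinely twisted class $\iota_M[E]\in K_0(C(M_0,\mathcal{A}(M)))$. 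So the proposed construction of $(M_0,E_0,\phi_0,\nu,\eta)$ does not get off the ground, and with it both (i) and (ii) remain unproved. Separately, even if the reduction worked it would at best yield (ii); it is not clear how the same argument could establish (i), i.e.\ injectivity of $\mu$ restricted to $\im\delta$.
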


\begin{proof}
That $\delta$ is an isomorphism for principal $PU(n)$-bundles over finite $CW$-complexes if and only if $\mu_{geo}$ is an isomorphism follows from Theorem \ref{geometricmodification} and Theorem \ref{analyticassembly}, since $\delta^{-1}=\mu_{geo}^{-1}\circ\mu$ and $\mu_{geo}^{-1}=\delta^{-1}\mu^{-1}$. Similarly, as $\mu\circ\delta=\mu_{geo}$, $\delta$ is injective if and only if $\mu_{geo}$ is  and $\mu$ surjective if and only if $\mu_{geo}$ is. The final statement of the corollary concerning smooth manifolds follows from the previous observations and \cite[Theorem 6.4]{wangcycles}.
\end{proof}

\subsection{Geometric $T$-duality for geometric cycles}

In Subsection \ref{tdualprojective}, we considered $T$-dual projective cycles. This construction can also be done but with target twisted geometric $K$-homology. 

\begin{deef}[The twisted geometric $T$-dual of geometric cycle on a circle bundle]
If $(M,E,\phi)$ is a geometric cycle on the total space of a circle bundle $Z\to X$ we define its twisted geometric $T$-dual cycle $(M,E,\phi)\rtimes_{geo} \R$ on $P^T$ by
\[(M,E,\phi)\rtimes_{geo} \R:=(M\times S^1,E,\pi_Z\circ \phi\times \id,\nu_{M\times S^1},\eta_{\tilde{\phi}}),\]
where 
\begin{enumerate}
\item We identify $E$ with its pullback to $M\times S^1$.
\item The mapping $\nu_{M\times S^1}:M\times S^1\to BSO(\infty)$ comes from the spin$^c$-structure on $M$ and the standard spin$^c$-structure on $S^1$.
\item The homotopy $\eta_{\tilde{\phi}}$ is constructed from the spin$^c$-structure on $M$ and the $PU(\He)$-equivariant mapping $\tilde{\phi}:M\times S^1\times PU(\He)\to P^T$ of Proposition \ref{tildephi} following Proposition \ref{hompropstab}.
\end{enumerate}
\end{deef}

\begin{prop}
If $Z\to X$ is a circle bundle $T$-duality produces a well defined monomorphism $\rtimes_{geo} \R:K_*^{geo}(Z)\to K_*^{geo}(P^T)$ that commutes with assembly mappings. So the following diagram commutes;
\[\begin{CD}
K_*^{geo}(Z)@>\rtimes_{geo}\R>>K_*^{geo}(P^T) \\
@V\mu VV @VV\mu_{geo} V \\
K_*^{an}(Z)@>\alpha_{CT}^{-1}>> K_*^{an}(P^T) \\
\end{CD}\]
In particular, the assembly mapping in twisted geometric $K$-homology is a surjection for $P^T$.
\end{prop}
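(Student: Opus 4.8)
The plan is to transcribe the proof of Proposition~\ref{tdualprop}, replacing projective $K$-homology by twisted geometric $K$-homology throughout. First, I would check that $\rtimes_{geo}\R$ is well defined. It visibly takes a disjoint union of cycles to a disjoint union. If $(M,E,\phi)^V$ denotes vector bundle modification along an even-dimensional spin$^c$-bundle $V\to M$ and $\mathrm{pr}\colon M\times S^1\to M$ is the projection, then $S(V\oplus 1_\R)\times S^1=S(\mathrm{pr}^*V\oplus 1_\R)$, the Bott bundle of $\mathrm{pr}^*V$ is the pullback of that of $V$, and the data $\nu_{M\times S^1}$, $\eta_{\tilde\phi}$ are produced naturally from the spin$^c$-structure on $M$ and from $\tilde\phi$ via Proposition~\ref{hompropstab}; hence $(M,E,\phi)^V\rtimes_{geo}\R$ is the vector bundle modification of $(M,E,\phi)\rtimes_{geo}\R$ along $\mathrm{pr}^*V$. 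Finally, a geometric cycle with boundary $(W,F,f)$ over $Z$ gives a twisted geometric cycle with boundary $(W\times S^1,F,\pi_Z\circ f\times\id,\nu_{W\times S^1},\eta_{\tilde f})$ over $P^T$ whose boundary is $\partial(W,F,f)\rtimes_{geo}\R$, the homotopy data extending over $W\times S^1$ just as in the construction of $\eta_{\tilde\phi}$. So $\rtimes_{geo}\R$ descends to a homomorphism on $K_*^{geo}(Z)$.

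Next I would establish commutativity of the square. Unwinding the definition of $\mu_{geo}$ from Section~\ref{sectiongeometricmodification} gives
\[\mu_{geo}\bigl((M,E,\phi)\rtimes_{geo}\R\bigr)=(\hat\eta_{\tilde\phi})_*\bigl([M\times S^1]\cap[E]\bigr).\]
Exactly as in Proposition~\ref{tdualprop}, under the splitting $K^*(M\times S^1)\cong K^*(M)\otimes\Z^2$ the Connes--Thom automorphism is $K^*(M)$-linear, so $[E]\otimes\alpha_{CT}=i_![E]$ for $E$ pulled back along the first-factor inclusion $i\colon M\to M\times S^1$ (naturality reduces this to the case $M=\mathrm{pt}$). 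Combining this with the compatibility of $\pi_Z\circ\phi\times\id$ with $\tilde\phi$ from Proposition~\ref{tildephi}, with naturality of the cap product and of the Connes--Thom isomorphism, and with naturality of push-forward (Lemma~\ref{pullbackpushforward}), the right-hand side rewrites as $\alpha_{CT}^{-1}\circ\phi_*\bigl([M]\cap[E]\bigr)=\alpha_{CT}^{-1}\circ\mu(M,E,\phi)$, which is the asserted commutativity.

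It then remains to deduce the monomorphism and surjectivity claims. The left vertical map $\mu$ is the ordinary Baum--Douglas assembly mapping for $Z$, an isomorphism when $Z$ is a finite $CW$-complex, in particular when $X$ is; since $\alpha_{CT}^{-1}$ is an isomorphism, the identity $\mu_{geo}\circ(\rtimes_{geo}\R)=\alpha_{CT}^{-1}\circ\mu$ forces $\rtimes_{geo}\R$ to be injective. As the right-hand side of this identity is also surjective, $\mu_{geo}$ is surjective on $P^T$, which is the final assertion.

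The main obstacle is the commutativity computation: one has to verify that pushing $[M\times S^1]\cap[E]$ forward along the stable morphism $\hat\eta_{\tilde\phi}\colon Pr(\nu_{M\times S^1})\to P^T$ manufactured from the homotopy $\eta_{\tilde\phi}$ coincides with the corresponding push-forward along the $PU(\He)$-equivariant mapping $\tilde\phi\rtimes\R$ of Proposition~\ref{tildephi}, once one accounts for the Morita identifications relating $\mathcal{A}(Pr(\nu))$, the Clifford algebra bundle, and $\mathcal{A}(P^T)$. Matching these fundamental classes and their orientations is the only genuinely non-routine point; everything else is a direct transcription of the projective-cycle argument of Proposition~\ref{tdualprop}.
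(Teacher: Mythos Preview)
Your proposal is correct and follows precisely the approach indicated by the paper, which does not give a full proof but states that it ``goes along the same lines as Proposition~\ref{tdualprop}'' with the one difference that ``the equivariance property is hidden away in the homotopy $\eta_{\tilde{\phi}}$.'' You have correctly identified this very point as the only non-routine step in your final paragraph, so your write-up is in fact more detailed than what the paper supplies.
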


The fact that the assembly mapping is a surjection in this case also follows from Corollary \ref{tdualsur} and Corollary \ref{assemblycorollary}. The proof of this Proposition goes along the same lines as Proposition \ref{tdualprop}, so we shall not prove this. The difference being that the equivariance property is hidden away in the homotopy $\eta_{\tilde{\phi}}$.

%
%
%
%
%
%
%

\Large
\section{Fractional index theory}
\label{fractionalsection}
\normalsize

One of the reasons to introduce geometric $K$-homology was the need for a more geometric homology theory describing the Atiyah-Singer index theorem. In this section we will study how one can use projective $K$-homology to describe the fractional index in a geometric way.  As mentioned in the introduction, this problem was the starting point for considering the construction of projective  cycles.

\subsection{The fractional index}

In this subsection, we take a closer look at the fractional index. Let us first recall the motivation coming from the fractional analytic index of projective pseudo-differential operators. If $A$ is an elliptic projective pseudo-differential operator, see Subsection \ref{clutching} or \cite{mms}, with parametrix $R$, the projective operator $[A,R]$ is a projective smoothing operator and one defines the fractional analytic index of $A$ by
\[\ind_{a}(A):=\tra[A,R]\in \C.\]
The trace functional $\tra$ is defined in \eqref{tracedefinition} (on page \pageref{tracedefinition}). The number $\ind_a(A)$ does not depend on the choice of parametrix because $\tra$ has the tracial property. A priori, the fractional analytic index is only an invariant of $A$ which takes complex values. However, much more can be said about this invariant.

\begin{thm}[Theorem $4$ of \cite{mms}]
\label{fractionalindextheorem}
The fractional analytic index of an elliptic projective pseudo-differential operator $A$ over a principal $PU(n)$-bundle is fractional and given by the formula
\[\ind_a(A)=\int_{T^*X}\ch_{\pi^*P}[\sigma(A)]\wedge Td(X).\]
\end{thm}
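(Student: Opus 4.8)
The plan is to run the heat-kernel proof of the Atiyah-Singer index theorem inside the projective calculus, exploiting that this calculus is \emph{locally} the ordinary $M_n(\C)$-valued pseudo-differential calculus (see \cite{mms}). First I would reduce to a convenient normal form. Since $\ind_a(A)$ depends only on the stable homotopy class $[\sigma(A)]\in K^0_{proj}(T^*_\perp P)\cong K_0(C_0(T^*X,\pi^*\mathcal{A}(P)))$ (recall Theorem \ref{projectivevstwisted}), after a stabilisation and a homotopy I may assume that $A\in\Psi^m(P;E,F)$ is elliptic with $m>0$, and I pass to the nonnegative self-adjoint projective operators $A^*A\in\Psi^{2m}(P;E,E)$ and $AA^*\in\Psi^{2m}(P;F,F)$. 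For $t>0$ the heat operators $\e^{-tA^*A}$ and $\e^{-tAA^*}$ are projective \emph{smoothing} operators, hence lie in the domain of the trace functional $\tra$ of \eqref{tracedefinition}; the intertwining $A\,\e^{-tA^*A}=\e^{-tAA^*}A$ together with the tracial property $\tra(T_1T_2)=\tra(T_2T_1)$ show, via the Duhamel principle, that $t\mapsto\tra(\e^{-tA^*A})-\tra(\e^{-tAA^*})$ is independent of $t$, and comparing with a parametrix $R$ (letting $t\to\infty$) identifies this constant with $\tra[A,R]=\ind_a(A)$. Thus
\[\ind_a(A)=\tra\!\left(\e^{-tA^*A}\right)-\tra\!\left(\e^{-tAA^*}\right)\qquad\text{for all }t>0.\]

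Next I would compute the limit as $t\to 0^+$. Over a trivialising chart $U_\alpha$ one has $\mathcal{A}(P)|_{U_\alpha}\cong U_\alpha\times M_n(\C)$, so a projective pseudo-differential operator is there the restriction of an ordinary $M_n(\C)$-valued one; the parametrix and heat-kernel constructions near the diagonal are therefore unchanged, $\e^{-tA^*A}$ has the usual small-time asymptotic expansion of its Schwartz kernel along $\Delta$, and the weak cocycle $(\theta_{\alpha\beta\gamma})$ enters only through the patching of charts, cancelling once one forms the density $\tra_E(k_T|_\Delta)\in C^\infty(X,\Omega)$. Consequently the alternating sum of the leading heat coefficients produces, chart by chart, exactly the local index density of the ordinary local index theorem: for the Dirac-type contribution this is the Getzler-rescaling computation, and a general elliptic symbol is reduced to it by the Atiyah-Bott-Patodi excision argument, which one checks is compatible with the local trivialisations of $\mathcal{A}(P)$. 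Hence $\tra(\e^{-tA^*A})-\tra(\e^{-tAA^*})$ converges, as $t\to 0^+$, to the integral over $X$ of a canonically defined closed form.

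It remains to identify this form. In each chart it is the ordinary Chern-Weil expression whose push-forward along $\pi:T^*X\to X$ represents $\ch[\sigma(A)]\wedge Td(X)$; these local forms do \emph{not} glue to a global form on $X$, precisely because of $(\theta_{\alpha\beta\gamma})$, but they do glue to the \emph{twisted} Chern character $\ch_{\pi^*P}[\sigma(A)]$, since by construction (cf.\ Proposition \ref{commdiagramrational} and the $N$-th root construction preceding it) the twisted Chern character is exactly this collection of local Chern-Weil forms, corrected by the data that rationally trivialises the twist's cocycle. Integrating over $T^*X$ then yields
\[\ind_a(A)=\int_{T^*X}\ch_{\pi^*P}[\sigma(A)]\wedge Td(X),\]
and the fractionality asserted in the theorem follows from this formula together with an integrality argument applied to a power of $P$ whose Dixmier-Douady invariant vanishes. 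The main obstacle I anticipate lies in the first step, carried out \emph{inside} the projective calculus: because the smoothing operators do not form an algebra, one must verify carefully that the heat semigroup, the Duhamel/McKean-Singer manipulation and the requisite trace-norm estimates all make genuine sense there; a secondary difficulty is the bookkeeping needed to ensure that the glued local Chern-Weil forms reproduce $\ch_{\pi^*P}[\sigma(A)]$ exactly, with no spurious normalisation constant.
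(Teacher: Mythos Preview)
The paper does not prove this statement: Theorem~\ref{fractionalindextheorem} is quoted verbatim as ``Theorem~4 of \cite{mms}'' and is immediately followed by the sentence ``Motivated by Theorem~\ref{fractionalindextheorem}, we define\ldots''. There is no proof to compare your proposal against; the result is imported from Mathai--Melrose--Singer and used as a black box to motivate the definition of $\ind_f$.

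On its own merits, your heat-kernel sketch is a reasonable outline and is close in spirit to the argument in \cite{mms}, which also relies on the fact that near the diagonal the projective calculus is locally indistinguishable from the ordinary matrix-valued one, so that the standard heat-kernel asymptotics go through unchanged. You have correctly flagged the genuine subtlety: the smoothing operators do not form an algebra, so the existence of $\e^{-tA^*A}$ as a projective smoothing operator, the Duhamel identity, and the $t$-independence of the supertrace all require that every composition used be supported in a sufficiently small neighbourhood of the diagonal. In \cite{mms} this is handled by working with operators whose kernels are cut off to a fixed $\epsilon$-neighbourhood and tracking the error terms; your sketch would need to do the same rather than invoke a semigroup in the usual sense. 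The identification of the limiting density with $\ch_{\pi^*P}[\sigma(A)]\wedge Td(X)$ is also correct in outline, though your appeal to Proposition~\ref{commdiagramrational} is a bit oblique: that proposition concerns the special bundles $P_\omega$ with $\delta(P_\omega)=0$, not a general torsion twist, so the cleaner justification is simply that the twisted Chern character is defined via local Chern--Weil data and a trivialising connection on $\mathcal{A}(P)$, which is exactly what the local heat coefficients produce.
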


Motivated by Theorem \ref{fractionalindextheorem}, we define the fractional index $\ind_{f}:K^*_{proj}(P)\to \Q$ for $P\to X$ a smooth principal $PU(n)$-bundle over a closed manifold by
\[\ind_{f}(E):=\int_X \ch_P[E]\wedge \hat{A}(X).\]
So the content of Theorem \ref{fractionalindextheorem} is that $\ind_a(A)=\ind_f[\sigma(A)]$ since $\hat{A}(T^*X)=Td(X)$.

\begin{deef}[The projective index]
\label{projectiveindex}
Let $P\to X$ be a principal $PU(n)$-bundle and $(M,E,\phi)$ be a projective cycle over $P$. We define the projective index of $(M,E,\phi)$ via
\[\ind_{proj}(M,E,\phi):=\int_{PU(k)\backslash M}\ch_{\phi_0^*P}[\Psi_\phi E]\wedge\hat{A}(PU(k)\backslash M)\in \R,\]
where $\Psi_\phi$ denotes the stable isomorphism $\phi_0^*P\cong M$ associated with $\phi$.
\end{deef}

Our aim is of course to show that the projective index gives a well defined mapping on projective $K$-homology. It is instructive to consider the situation when $P\to X$ is a smooth principal $PU(n)$-bundle with a compatible spin$^c$-structure. Every cycle $(M,E,\phi)$ can be uniquely represented by the cycle $(P,\phi_!E,\id)$ in $K_*^{proj}(P)$ because of Poincar\'e duality. So we must show that the projective index of the two coincide. 

From the twisted Riemann-Roch Theorem, see Theorem $5.3$ and the remarks thereafter in \cite{careywang}, we have that 
\begin{align*}
\ind_{proj}(M,E,\phi)&=\int_{PU(k)\backslash M}\ch_{\phi_0^*P}[\Psi_\phi E]\wedge\hat{A}(PU(k)\backslash M)=\\
&=\int_{X}(\phi_0)_*\left(\ch_{\phi_0^*P}[\Psi_\phi E]\wedge\hat{A}(PU(k)\backslash M)\right)=\\
&=\int_X\ch_P(\phi_0)_!\Psi_\phi[E]\wedge \hat{A}(X)=\int_X\ch_P\phi_![E]\wedge \hat{A}(X)=\\
&=\ind_{proj}(P,\phi_!E,\id).
\end{align*}

\begin{prop}
\label{projindexprop}
For any principal $PU(n)$-bundle, the projective index gives a well defined mapping
\[\ind_{proj}:K_*^{proj}(P)\to \Q.\]
\end{prop}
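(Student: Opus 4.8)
The plan is to show that $\ind_{proj}$ respects the three elementary moves generating the equivalence relation on projective cycles (disjoint union/direct sum, bordism, and vector bundle modification), so that it descends to $K_*^{proj}(P)$; additivity under disjoint union is then immediate from the integral formula. By Remark \ref{borddefkproj} it suffices to work with projective cycles with $K$-theory data modulo bordism and vector bundle modification, so the integrand $\ch_{\phi_0^*P}[\Psi_\phi(\cdot)]\wedge\hat A$ extends $\Z$-linearly and the disjoint union/direct sum relation is automatic.

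First I would handle vector bundle modification. Given a cycle $(M,[E_1]-[E_2],\phi)$ and an even-dimensional spin$^c$-bundle $V_0\to PU(k)\backslash M=:Y$, the modified cycle lives over $Y^{V_0}=S(V_0\oplus 1_\R)$ with $K$-theory data the image of $[E_1]-[E_2]$ under $\pi_{M^V}^*(-)\otimes \pi_{Y^{V_0}}^*[\beta_{V_0}]$, and the map $\phi$ is postcomposed with the projection $\pi_{M^V}$. The point is that the south pole section $s\colon Y\to Y^{V_0}$ satisfies $\pi_{Y^{V_0}}\circ s=\id_Y$, $s^*[\beta_{V_0}]=[1]$ (the Bott class restricts trivially at the south pole), and $s_!(x) = \pi_{Y^{V_0}}^*(x)\otimes\pi_{Y^{V_0}}^*[\beta_{V_0}]$ up to the standard Thom-isomorphism bookkeeping. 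Applying the (twisted) Riemann-Roch/push-forward formula — exactly the twisted Gysin formula used in Definition \ref{projectiveindex} and taken from \cite{careywang} — one gets
\[
\int_{Y^{V_0}}\ch_{(\phi_0\circ\pi)^*P}[\text{modified data}]\wedge\hat A(Y^{V_0})
=\int_Y \ch_{\phi_0^*P}[\Psi_\phi(E_1)-\Psi_\phi(E_2)]\wedge\hat A(Y),
\]
because $\pi_{Y^{V_0}}$ is a fibration with even-dimensional fibers diffeomorphic to spheres, $(\pi_{Y^{V_0}})_!\,\ch[\beta_{V_0}]\wedge\hat A(T^{fib}) = 1$, and the twist and the $\hat A$-class pull back compatibly. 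The Chern character being multiplicative and a natural transformation of the twisted theories makes the computation formal.

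Next I would handle bordism invariance. Let $(W,x,f)$ be a projective cycle with boundary, $x\in K^0_{proj}(W)$, so $PU(k)\backslash W$ is a manifold with boundary $PU(k)\backslash(\partial W)$. The integrand $\omega:=\ch_{f_0^*P}[\Psi_f x]\wedge\hat A(PU(k)\backslash W)$ is a closed differential form on $PU(k)\backslash W$ representing the relevant characteristic class (the twisted Chern character can be represented by a closed form built from a connection on the Azumaya bundle and the $K$-theory data). Its restriction to the boundary is precisely the integrand computing $\ind_{proj}(\partial W, x|_{\partial W}, f|_{\partial W})$, including the opposite spin$^c$-orientation on the $-M'$ piece, which flips orientation. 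Stokes' theorem then gives
\[
\ind_{proj}(\partial W, x|_{\partial W}, f|_{\partial W}) = \int_{PU(k)\backslash(\partial W)}\omega|_{\partial} = \int_{PU(k)\backslash W} d\omega = 0,
\]
so bordant cycles have equal index. Here I would be slightly careful that the twisted Chern character really admits a closed-form de Rham representative that is natural under pullback along $f_0$ and restriction to the boundary; this is standard for torsion and non-torsion twists alike once one fixes a connection, and it is in the same spirit as Proposition \ref{commdiagramrational}.

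Finally, rationality: the value lands in $\Q$ rather than merely $\R$. I would deduce this from Theorem \ref{fractionalindextheorem} together with Lemma \ref{movingoverf} / the clutching construction, or more directly from the already-computed identity $\ind_{proj}(M,E,\phi)=\int_X \ch_P(\phi_0)_!\Psi_\phi[E]\wedge\hat A(X)=\ind_f(\phi_![E])$ in the spin$^c$ case, reducing the general case to the retract $N$ of Lemma \ref{retract}; since $\ind_f$ of an integral twisted $K$-theory class is fractional by Mathai-Melrose-Singer (Theorem \ref{fractionalindextheorem}), rationality follows. The main obstacle I anticipate is the vector bundle modification step: making the twisted Gysin/Riemann-Roch bookkeeping precise — in particular that $\pi_{Y^{V_0}}$-integration of $\ch[\beta_{V_0}]\wedge\hat A$ of the vertical tangent bundle equals $1$ while being compatible with the $\phi_0^*P$-twist and the choice of spin$^c$-structures on the sphere bundle — is where the real content sits; bordism invariance and additivity are comparatively routine once a closed de Rham representative of $\ch_{\phi_0^*P}$ is fixed.
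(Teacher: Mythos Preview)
Your approach is essentially the paper's: verify invariance under the three elementary moves, with vector bundle modification handled by the twisted Riemann--Roch theorem of \cite{careywang} and bordism by Stokes. Two points are worth flagging. First, your aside that closedness of the de Rham representative of $\ch_{\phi_0^*P}$ ``is standard for torsion and non-torsion twists alike'' is wrong: for a non-torsion twist the twisted Chern character satisfies $(d-H\wedge)\omega=0$, i.e.\ $d\omega=H\wedge\omega$, so $\omega$ is \emph{not} closed in the ordinary sense and the Stokes argument fails; this is exactly the obstruction discussed in Subsection \ref{nontorsionproblems}. The proposition is stated only for $PU(n)$-bundles precisely because closedness requires the torsion hypothesis, and the paper's proof makes this explicit. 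Second, your rationality argument via Mathai--Melrose--Singer and the retract is correct but circuitous: the paper simply observes that for a $PU(n)$-bundle the twisted Chern character already lands in rational cohomology, so the integral is rational on the nose without invoking Theorem \ref{fractionalindextheorem} or any reduction to the spin$^c$ case.
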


\begin{proof}
Since Chern characters on the projective $K$-theory of a principal $PU(n)$-bundle takes values in rational cohomology, it is clear that the projective index takes rational values. We must show that the projective index respects the three relations in $K_*^{proj}$. It is obvious that the projective index respects direct sums. 

To prove that the projective index respects vector bundle modification, assume that $V_0\to PU(k)\backslash M$ is an even-dimensional spin$^c$ vector bundle and as usual let $V:=\pi^*V_0$. Then, by the twisted Riemann-Roch Theorem (Theorem $5.3$ in \cite{careywang}),
\begin{align*}
\ind_{proj}(M,E,\phi)^V&=\int_{S(V_0\oplus 1)} \ch_{(\pi^V)^*\phi^*P}\left[\Psi_{\phi\circ \pi^V}s_! E\right]\wedge \hat{A}(S(V_0\oplus 1))=\\
&= \int_{S(V_0\oplus 1)} \ch_{(\pi^V)^*\phi^*P}\left[s_! \Psi_{\phi}E\right] \wedge\hat{A}(S(V_0\oplus 1))=\\
&= \int_{S(V_0\oplus 1)} s_*\left(\ch_{\phi^*P}\left[\Psi_{\phi}E\right]\wedge \hat{A}(PU(k)\backslash M)\right)=\ind_{proj}(M,E,\phi).
\end{align*}

To verify that the projective index respects bordism, we observe that if $(W,E,\phi)$ is a projective cycle with boundary, $\Psi_\phi[E]|_{\partial W}=\Psi_{\phi|_{\partial W}}[E|_{\partial W}]$ and 
\[\hat{A}(W/PU(k))|_{\partial W/PU(k)}=\hat{A}(\partial W/PU(k)).\] 
Stokes Theorem implies that 
\begin{align}
\label{bordismintegral}
\nonumber
\ind_{proj}(\partial W,E|_{\partial W},\phi|_{\partial W})&= \int_{\partial W/PU(k)} \ch_{\phi_0^*P}[\Psi_\phi E]\wedge\hat{A}(W/PU(k))=\\
&=\int_{W/PU(k)} \rd\left(\ch_{\phi_0^*P}[\Psi_\phi E]\wedge\hat{A}(W/PU(k))\right)=0,
\end{align}
Note that, since $P$ is a $PU(n)$-bundle, the Chern character of a projective bundle is a closed form. 
\end{proof}

In the proof that the projective index respects bordism, we see the assumption that $P$ must be a principal $PU(n)$-bundle. This observation will be considered in more detail in the next section.

\begin{deef}[The twisted geometric index]
Let $P\to X$ be a principal $PU(n)$-bundle and $(M,E,\phi,\nu,\eta)$ be a twisted geometric cycle. We define the twisted geometric index of $(M,E,\phi,\nu,\eta)$ as
\[\ind_{geo}(M,E,\phi,\nu,\eta):=\int_{M}\ch_{\phi_0^*P\cdot Fr(-\nu)}[\Psi_\eta E]\wedge\hat{A}(M)\in \R,\]
where $\Psi_\eta$ denotes the stable isomorphism $\phi_0^*P\cdot Fr(-\nu)\cong M$ associated with $\eta$.
\end{deef}

\begin{prop}
If $P$ is a $PU(n)$-bundle, the twisted geometric index gives a well defined mapping
\[\ind_{geo}:K_*^{geo}(P)\to \Q.\]
\end{prop}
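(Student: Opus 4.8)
The plan is to mimic the proof of Proposition \ref{projindexprop} almost verbatim, with the manifold $M$ and the twist $\phi_0^*P\cdot Fr(-\nu)$ playing the roles of $PU(k)\backslash M$ and $\phi_0^*P$. The crucial structural input is that, since $P$ is a principal $PU(n)$-bundle, the bundle $\phi_0^*P\cdot Fr(-\nu)$ is a principal $PU(m)$-bundle of some finite rank $m$; in particular $\delta(P)$ is torsion (cf.\ Remark \ref{ddremark}) and the twisted Chern character $\ch_{\phi_0^*P\cdot Fr(-\nu)}$ takes values in rational de Rham cohomology and is represented by closed differential forms. This already shows that $\ind_{geo}$ is $\Q$-valued, and closedness is what makes the bordism step below go through.

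First I would dispose of the easy relations. The twisted Chern character is additive, so $\ind_{geo}$ respects direct sum/disjoint union. For vector bundle modification, let $V\to M_0$ be an even-dimensional spin$^c$-vector bundle, $M^V:=S(V\oplus 1)$, let $\pi_V:M^V\to M_0$ be the projection and $s:M_0\to M^V$ the south-pole section, so the modified cycle has manifold $M^V$, normal data $\nu_V$, homotopy $\eta_V$ and, modulo the index-neutral difference between $\pi_V^*E_0\otimes Q_V$ and $s_!E_0$, $K$-theory datum $s_!E_0$. The twisted Riemann-Roch theorem (Theorem 5.3 of \cite{careywang}) then gives
\[\ch_{(\phi_0\circ\pi_V)^*P\cdot Fr(-\nu_V)}[\Psi_{\eta_V}s_!E_0]\wedge\hat{A}(M^V)=s_*\bigl(\ch_{\phi_0^*P\cdot Fr(-\nu)}[\Psi_\eta E_0]\wedge\hat{A}(M_0)\bigr),\]
because the $\hat{A}$-correction carried by the push-forward $s_!$ absorbs the difference between $\hat{A}(M^V)$ and $\pi_V^*\hat{A}(M_0)$, while the two-out-of-three lemma identifies the restriction along $s$ of both the twist and the stable isomorphism $\Psi$ on $M^V$ with those on $M_0$. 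Integrating over $M^V$ and using $\pi_V\circ s=\id_{M_0}$ yields $\ind_{geo}(M^V,\dots)=\ind_{geo}(M_0,E_0,\phi_0,\nu,\eta)$.

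Finally, bordism invariance follows exactly as in \eqref{bordismintegral}: for a twisted geometric cycle with boundary $(W_0,E_0,\phi_0,\nu,\eta)$ the form $\ch_{\phi_0^*P\cdot Fr(-\nu)}[\Psi_\eta E_0]\wedge\hat{A}(W_0)$ is closed (here we use that $\phi_0^*P\cdot Fr(-\nu)$ is a genuine $PU(m)$-bundle), and its restriction to $\partial W_0$ represents the integrand of $\ind_{geo}(\partial W_0,E_0|_{\partial W_0},\dots)$, since $\hat{A}(W_0)|_{\partial W_0}=\hat{A}(\partial W_0)$ and the twist together with $\Psi_\eta$ restrict correctly; Stokes' theorem then gives $\ind_{geo}(\partial W_0,\dots)=\int_{W_0}\rd(\cdots)=0$. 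The main obstacle I anticipate is the bookkeeping in the vector bundle modification step, i.e.\ checking that the normal-bundle twist $Fr(-\nu)$ is compatible with the south-pole push-forward and with the $\hat{A}$-correction; the non-torsion obstruction discussed after Proposition \ref{projindexprop} is absent precisely because $\phi_0^*P\cdot Fr(-\nu)$ is of finite rank. As an alternative one may instead verify, again via Theorem 5.3 of \cite{careywang}, the cycle-wise identity $\ind_{geo}=\ind_{proj}\circ\delta$ and then invoke Theorem \ref{geometricmodification} together with Proposition \ref{projindexprop}; this trades the computations above for the closely related ones already performed for the geometric modification mapping.
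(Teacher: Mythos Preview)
Your proposal is correct and follows essentially the same route as the paper: both mimic the proof of Proposition \ref{projindexprop}, dispatching direct sums trivially, handling vector bundle modification via the twisted Riemann--Roch theorem (Theorem 5.3 of \cite{careywang}) and the south-pole push-forward, and deducing bordism invariance from Stokes' theorem together with the closedness of the twisted Chern character in the torsion-twist case. The alternative route you mention at the end, via the cycle-wise identity $\ind_{geo}=\ind_{proj}\circ\delta$, is not used in the paper's proof of this proposition (though it is essentially the content of Theorem \ref{geoprojfrac}).
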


\begin{proof}
The proof goes along the same lines as Proposition \ref{projindexprop}. We need to show that the three relations are respected. It is obvious that the projective index respects direct sums. 

To prove that the twisted geometric index respects vector bundle modification, assume that $V\to M$ is an even-dimensional spin$^c$ vector bundle and $(M,E,\phi,\nu,\eta)$ a twisted geometric cycle. Then, by the twisted Riemann-Roch Theorem (Theorem $5.3$ in \cite{careywang}),
\begin{align*}
\ind_{geo}(M,E,\phi,\nu,\eta)^V&=\int_{S(V\oplus 1)} \ch_{(\pi^V)^*\phi_0^*P\cdot Fr(-\nu^V)}\left[\Psi_{\eta^V}s_! E\right]\wedge \hat{A}(S(V\oplus 1))=\\
&= \int_{S(V\oplus 1)} \ch_{(\pi^V)^*\phi^*_0P\cdot Fr(-\nu^V)}\left[s_! \Psi_{\eta}E\right] \wedge\hat{A}(S(V\oplus 1))=\\
&= \int_{S(V\oplus 1)} s_*\left(\ch_{\phi^*_0P\cdot Fr(-\nu)}\left[\Psi_{\eta}E\right]\wedge \hat{A}(M)\right)=\ind_{geo}(M,E,\phi).
\end{align*}

The bordism invariance of the twisted geometric index is proved exactly as in Proposition \ref{projindexprop}.  It again uses the fact that, when $P$ is a $PU(n)$-bundle, the Chern character maps to closed form.
\end{proof}

\begin{thm}
\label{geoprojfrac}
The geometric, projective and fractional index coincide in the following sense. If $P\to X$ is a principal $PU(n)$-bundle the following diagram commutes:
\[
\xymatrix@C=2.4em@R=3.71em{
&& K_*^{proj}(P)\ar[dd]^{\ind_{proj}}&&\\
K^*_{proj}(T^*_\perp P)\ar[urr]^{c}\ar[drr]_{\ind_{f}}  && &&K_*^{geo}(P)\ar[ull]_{\delta}\ar[dll]^{\ind_{geo}}\\
&&\Q& &}
\]
\end{thm}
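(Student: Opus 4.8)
The plan is to prove the commutativity of the diagram by establishing the two outer triangles separately and then invoking the already-proved relations between the assembly maps. Concretely, the diagram consists of a projective side (the map $c$ followed by $\ind_{proj}$, compared with $\ind_f$) and a geometric side (the map $\delta$ followed by $\ind_{proj}$, compared with $\ind_{geo}$); the apex identification $\ind_{proj}\circ c = \ind_f$ on $K^*_{proj}(T^*_\perp P)$ and $\ind_{proj}\circ\delta = \ind_{geo}$ on $K^{geo}_*(P)$ together give the statement, since both targets map to $\Q$. All three indices are now known to descend to the respective $K$-homology (or $K$-theory) groups by Theorem \ref{fractionalindextheorem}, Proposition \ref{projindexprop}, and the proposition on $\ind_{geo}$, so it suffices to check the equalities on representatives.

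First I would handle the triangle $\ind_{proj}\circ\delta=\ind_{geo}$. Given a twisted geometric cycle $(M_0,E_0,\phi_0,\nu,\eta)$, Definition \ref{geomoddefe} produces the projective cycle $(M_\nu,[E_\nu],\hat\eta\circ\pi_\nu)$ with $M_\nu/PU(2^k)=S(T^*M_0\oplus 1_\R)$ (times the tori in the odd case) and $[E_\nu]$ obtained from $[E_0]$ via a south-pole pushforward $s_!$, a Morita isomorphism $\lambda_P$, and $\iota_{M_\nu}$. The key point is that, just as in the computation preceding Proposition \ref{projindexprop} and in the proof of Theorem \ref{geometricmodification}, the twisted Riemann–Roch theorem (Theorem $5.3$ of \cite{careywang}) collapses the integral over $S(T^*M_0\oplus 1_\R)$ back to an integral over $M_0$: one has $\int_{S(T^*M_0\oplus1)} s_*(\omega)=\int_{M_0}\omega$ and $s^*$ of the twisting datum and the $\hat A$-class on the sphere bundle restricts correctly, so that $\ch_{(\hat\eta\circ\pi_\nu)_0^*P}[\Psi_{\hat\eta\circ\pi_\nu}E_\nu]\wedge\hat A(S(T^*M_0\oplus1))$ integrates to $\ch_{\phi_0^*P\cdot Fr(-\nu)}[\Psi_\eta E_0]\wedge\hat A(M_0)$, which is exactly $\ind_{geo}$. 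The odd-dimensional case needs the extra two $S^1$-factors and the Hopf bundle $H$; here I would reuse the bordism $(S^1\times S^1,[H])\sim_{bor}(pt,\C)^{1_\C}$ from the proof of Theorem \ref{geometricmodification}, together with bordism invariance of $\ind_{proj}$, to reduce to the even case.

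For the triangle $\ind_{proj}\circ c=\ind_f$, I would take an elliptic projective pseudo-differential operator $A$ (or more generally a class in $K^*_{proj}(T^*_\perp P)$) and unwind the clutching construction: $c(A)=(S(T^*_\perp P\oplus1),E_A,\phi)$ with $\phi$ the projection to $P$ and $E_A$ glued from $\pi_1^*E_1,\pi_1^*E_2$ via $\sigma(A)$. By Lemma \ref{movingoverf} this cycle equals $[P,\phi_!E_A,\id]$, and $\phi_!$ here is (via $\iota$) the pushforward along $T^*_\perp P\to P$, i.e. a twisted Thom-type pushforward. Then $\ind_{proj}(P,\phi_!E_A,\id)=\int_X\ch_P[\phi_!E_A]\wedge\hat A(X)$, and by naturality of the twisted Chern character under pushforward along the (spin$^c$) cotangent projection — the same Riemann–Roch input — this equals $\int_{T^*_\perp P}\ch_{\pi^*P}[E_A]\wedge\hat A(T^*_\perp P)$; using $\hat A(T^*X)=Td(X)$ and the identification $[E_A]=[\sigma(A)]$ this is precisely $\ind_f[\sigma(A)]$, which by Theorem \ref{fractionalindextheorem} is $\ind_a(A)$. (One could alternatively deduce this leg from $\mu$ and $\mu_{geo}$ via Corollary \ref{assemblycorollary}, but the Riemann–Roch computation is the cleanest.)

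The main obstacle I anticipate is bookkeeping the twisting data through the pushforwards: one must track the Morita isomorphisms $\lambda_P$ and $\iota$, the stable isomorphisms $\Psi_\phi$, $\Psi_\eta$ that trivialize $\phi_0^*P$ against $M$, and verify that the twisted Chern character is natural with respect to $s_!$ and the Thom pushforward with exactly these twists — i.e. that the twisted Riemann–Roch theorem of \cite{careywang} applies verbatim in the presence of the extra central-character bookkeeping of projective bundles. Once that naturality is in hand, every equality above is a routine application of $\int s_*(\omega)=\int\omega$ together with restriction formulas for $\hat A$ and the twist, plus (in the odd case) the Hopf-bundle bordism already constructed. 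The cross-diagram equality $\ind_f=\ind_{geo}$ then follows formally, but I would remark that it is also forced by $\mu$ and $\mu_{geo}$ landing in the same analytic group.
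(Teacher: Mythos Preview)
Your approach is essentially the paper's: both triangles are checked by a direct computation with the twisted Riemann--Roch theorem of \cite{careywang}, collapsing the integral over the sphere bundle back to the base via $s_*$. For the right triangle your argument matches the paper's almost verbatim (the paper does not spell out the odd case separately, but your handling of it via the Hopf-bundle bordism is consistent with Theorem \ref{geometricmodification}).

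For the left triangle there is one small wrinkle worth flagging. You invoke Lemma \ref{movingoverf} to rewrite $[cl(A)]$ as $[P,\phi_!E_A,\id]$, but that lemma assumes $P$ is itself a spin$^c$-$PU(n)$-bundle, which is not part of the hypotheses of Theorem \ref{geoprojfrac}. The paper avoids this detour by noting directly that
\[
[cl(A)]=[S(T^*_\perp P\oplus 1),\,s_![\sigma(A)],\,\phi],
\]
where $s:T^*_\perp P\hookrightarrow S(T^*_\perp P\oplus 1)$ is the obvious embedding (this is the standard clutching identification, as in \cite{baumdtva}), and then applies Riemann--Roch along $s$ to pass from $\ind_{proj}$ of this cycle to $\int_{T^*X}\ch_{\pi^*P}[\sigma(A)]\wedge Td(X)=\ind_f[\sigma(A)]$. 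This is exactly the same Riemann--Roch input you are using, just applied one step earlier in the chain, and it sidesteps any extra hypothesis on $P$. With that adjustment your argument is complete and agrees with the paper's.
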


\begin{proof}
If $(M,E,\phi,\nu,\eta)$ is a geometric cycle the twisted Riemann-Roch Theorem \cite[Theorem 5.3]{careywang} implies
\begin{align*}
\ind_{proj}\left(\delta(M,E,\phi,\nu,\eta)\right)&=\int_{S(T^*M\oplus 1)} \ch_{(\hat{\eta}\circ \pi_\nu)^*P}[\Psi_{\phi_{\nu\eta}}s_!E]\wedge \hat{A}(S(T^*M\oplus 1))=\\
&=\int_{S(T^*M\oplus 1)}\ch_{(\hat{\eta}\circ \pi_\nu)^*P}[s_!\Psi_{\eta}E]\wedge \hat{A}(S(T^*M\oplus 1))=\\
&=\int_{S(T^*M\oplus 1)}s_*\left(\ch_{\phi^*P\cdot Fr(-\nu)}[\Psi_{\eta}E]\wedge \hat{A}(M)\right)=\\
&=\ind_{geo}(M,E,\phi,\nu,\eta).
\end{align*}

That the left hand side commutes is a consequence of the twisted Riemann-Roch Theorem and the fact that 
\[[cl(A)]=[S(T^*_\perp P\oplus 1),s_![\sigma(A)],\phi],\]
as in \cite{baumdtva}. The mapping $s:T^*_\perp P\to S(T^*_\perp P\oplus 1)$ is defined as the obvious embedding.

\end{proof}

\subsection{The problem with non-torsion Dixmier-Douady invariants}
\label{nontorsionproblems}

An interesting question is whether fractional index theory exists for general $PU(\He)$-bundles. There is not really an obvious index formula in this setting. In particular, the twisted Chern character does not take values in a cohomology group that is isomorphic to de Rham cohomology, so we can not integrate. This can be explained by the fact that we can not push forward to a point since this map does not respect the twist. 

On the analytic side there are problems in constructing a fractional index for infinite-dimensional $\He$ as there exists obstructions to defining projective pseudo-differential operators. It follows from \cite[Theorem 8.1.1]{deformationgerbes} that it is not possible to construct projective pseudo-differential operators over a principal $PU(\He)$-bundle when the Dixmier-Douady invariant is non-torsion. To be more precise, one can not construct a symbolic calculus in this case. It is however possible to construct a pseudo-differential calculus for describing the twisted index pairing, see \cite{gofftwisted}.

Similarly, there are problems in having a well defined fractional index in the geometric models for twisted $K$-homology. We will now discuss these problems. As the same type of problems arise in both geometric twisted $K$-homology as in projective $K$-homology, we focus on the latter. 

\begin{lem}
\label{blaaa}
Let $M\to X$ be a principal $PU(n)$-bundle, $P\to X$ be a principal $PU(\He)$-bundle and $\phi:M\to P$ be a stable isomorphism. Then, there is an even degree form $\omega_\phi$ (uniquely determined modulo exact forms by $\phi$) making the following diagram commutative:
\[\begin{CD}
K^*_{proj}(M)@>\Psi_\phi>>K^*_{proj}(P) \\
@V\ch_MVV @VV\ch_P V \\
H^*_{dR}(X)@>\e^{\omega_\phi}>> H^*_{\Omega(P)}(X) \\
\end{CD}.\]
\end{lem}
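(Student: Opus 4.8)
The plan is to reduce the statement to the flat-twist case already treated in Proposition \ref{commdiagramrational}, where the relevant form is explicitly $\omega$ and the horizontal map is cup product with $\e^\omega$. The key observation is that a stable isomorphism $\phi\colon M\to P$ exhibits $\delta(P)$ as a torsion class (since $M$ is a finite-rank $PU(n)$-bundle, so $n\,\delta(M)=0$ by Remark \ref{ddremark}, and $\phi_0^*\delta(P)=\delta(M)$ by Proposition \ref{hompropstab}); here $\phi_0$ is a homotopy equivalence, so $\delta(P)$ itself is torsion. Hence the twisted de Rham cohomology $H^*_{\Omega(P)}(X)$ makes sense: the Dixmier-Douady class has a de Rham representative which, being torsion, is exact, so one may take the twisting $3$-form to be $0$ and $H^*_{\Omega(P)}(X)\cong H^*_{dR}(X)$ as vector spaces, though not canonically. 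The form $\omega_\phi$ will measure precisely this non-canonical identification.

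First I would recall the construction of the twisted Chern character $\ch_P\colon K^*_{proj}(P)\to H^*_{\Omega(P)}(X)$ and of $\ch_M$. Since $M$ is a $PU(n)$-bundle, $\ch_M$ lands in honest de Rham cohomology and is a rational isomorphism after tensoring, by the fact that the twisted Chern character is a rational isomorphism for torsion twists (used repeatedly in the paper). Next, using Proposition \ref{hompropstab}, the stable isomorphism $\phi$ gives not just the equality $\phi_0^*\delta(P)=\delta(M)$ but an explicit homotopy between representing cocycles; choosing de Rham representatives $B_M$ for $\delta(M)$ (exact, write $B_M=\rd\beta_M$) and $B_P$ for $\delta(P)$ (exact, write $B_P=\rd\beta_P$), the homotopy furnishes a primitive whose "difference term" is a globally defined odd-degree form $\theta_\phi$ on $X$ with $\rd\theta_\phi=B_M-\phi_0^*B_P$; this $\theta_\phi$ induces an isomorphism $H^*_{\Omega(P)}(X)\xrightarrow{\sim} H^*_{dR}(X)$ of the two twisted complexes, well defined up to the choices of primitives. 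I would then set $\omega_\phi$ to be the even-degree form obtained by combining $\theta_\phi$ with the local curvature-type data of the transition functions of a projective bundle representing a generator, exactly mimicking the computation in Proposition \ref{rootprop} and Proposition \ref{commdiagramrational}: on local trivializations, $\Psi_\phi$ twists the transition functions of a projective bundle on $M$ by the cochain coming from $\phi$, and the logarithmic derivative of that cochain assembles (via the homotopy) into a globally defined closed form, which is $\omega_\phi$.

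The main technical step, and the one I expect to be the main obstacle, is verifying that the two naturality squares glue correctly — i.e., that after transporting $\ch_P\circ\Psi_\phi$ through the identification $H^*_{\Omega(P)}(X)\cong H^*_{dR}(X)$ induced by $\theta_\phi$, the resulting endomorphism of $H^*_{dR}(X)$ is genuinely cup product with a \emph{single} globally defined class $\e^{\omega_\phi}$, independent of the local choices, and depending on $\phi$ only modulo exact forms. The cleanest route is probably to factor the argument: reduce to the universal situation of roots of line bundles via a pullback from $P_\omega$ as in Subsection \ref{rootsoflinebundles} (any torsion twist with a chosen stable trivialization data is, up to the relevant equivalences, pulled back from such a model), invoke Proposition \ref{commdiagramrational} there, and then check that pullback intertwines the twisted Chern characters and carries $\omega$ to $\omega_\phi$. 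The uniqueness modulo exact forms then follows because two choices of primitives/homotopies differ by a closed form whose class acts trivially, and one concludes that $\omega_\phi$ is well defined in $H^{even}_{dR}(X)$ as claimed.
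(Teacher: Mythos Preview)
Your approach is genuinely different from the paper's, and far more elaborate. The paper's proof is essentially two sentences: it invokes \cite{mathstev} for the fact that the twisted Chern character is a complex isomorphism, so since $\ch_M$, $\ch_P$ and $\Psi_\phi$ are all isomorphisms the bottom arrow exists and is asserted to have the form $\e^{\omega_\phi}$; uniqueness modulo exact forms is then argued by noting that if $\e^{\omega_\phi}$ and $\e^{\omega_\phi'}$ both work, then $\e^{\omega_\phi-\omega_\phi'}$ is closed with cohomology class $1$, hence $\omega_\phi-\omega_\phi'$ is exact. There is no explicit construction of $\omega_\phi$ whatsoever --- the paper treats existence as a formal consequence of the Chern character isomorphism. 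What your constructive approach would buy is an actual identification of $\omega_\phi$ in terms of the data of $\phi$, which the paper does not provide and which would make the subsequent discussion (of why $\omega_\phi$ need not be closed for non-torsion twists) more transparent.

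That said, your proposed ``cleanest route'' has a real gap. The model bundles $P_\omega$ of Subsection~\ref{rootsoflinebundles} all satisfy $\delta(P_\omega)=0$: they are built precisely so that the Dixmier--Douady class vanishes (see the opening paragraph of that subsection). Hence they are \emph{not} universal models for arbitrary torsion-twisted bundles, and your claim that ``any torsion twist with a chosen stable trivialization data is, up to the relevant equivalences, pulled back from such a model'' is not correct as stated. Proposition~\ref{commdiagramrational} only tells you what happens when the twist is already trivial; it does not handle the case $\delta(M)=\delta(P)\neq 0$ in $H^3(X,\Z)_{tors}$. If you want to salvage the constructive approach, a more direct line is to observe that the stable isomorphism $\Psi_\phi$ is implemented on Azumaya bundles by tensoring with a Morita bimodule, and the twisted Chern character of that bimodule furnishes $\e^{\omega_\phi}$ directly --- no reduction to a universal model is needed.
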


\begin{proof}
By \cite{mathstev} the twisted Chern character is a complex isomorphism so $\e^{\omega_\phi}$ exists. If we have two such isomorphisms $\e^{\omega_\phi}$ and $\e^{\omega_\phi'}$, the form $\e^{\omega_\phi-\omega_\phi'}$ is closed and its cohomology class is $1$, that is $\omega_\phi-\omega_\phi'$ is exact.
\end{proof}

From Lemma \ref{blaaa}, we see exactly why the definition of the projective index breaks down for non-torsion Dixmier-Douady invariant. Namely, the mapping $\Psi_\phi$ is in general implemented by a non-closed form. Thus the bordism relation need not be respected by the projective index for non-torsion twists.

Being that it is the mapping $\Psi_\phi$ that causes problems with the projective index let us consider a definition of an index where $\Psi_\phi$ is not included. We define the naive projective index
\[\widetilde{\ind}_{proj}(M,E,\phi):=\int_{PU(k)\backslash M}\ch_{M}[E]\wedge\hat{A}(PU(k)\backslash M)\in \R.\]
We make the following observation based on the proof of Proposition \ref{projindexprop}.

\begin{prop}
The naive projective index respects direct sum/disjoint union and bordism relation but in general not vector bundle modification. 
\end{prop}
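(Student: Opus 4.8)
The plan is to revisit the proof of Proposition \ref{projindexprop} and track precisely the places where the stable isomorphism $\Psi_\phi$ intervened. Write $Y:=PU(k)\backslash M$ throughout. That $\widetilde{\ind}_{proj}$ respects direct sum/disjoint union is immediate from the additivity of $\ch_M$ and of integration, exactly as for $\ind_{proj}$. For bordism, let $(W,F,f)$ be a projective cycle with boundary. Since $W$ is a principal $PU(k)$-bundle with $k$ finite, the Dixmier--Douady invariant $\delta(W)$ is torsion, so $\ch_W[F]$ is an honest \emph{closed} form on $W/PU(k)$; moreover $\ch_W[F]$ restricts to $\ch_{\partial W}[F|_{\partial W}]$ and $\hat{A}(W/PU(k))$ restricts to $\hat{A}(\partial W/PU(k))$. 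Hence, by Stokes' theorem,
\[
\widetilde{\ind}_{proj}(\partial W,F|_{\partial W},f|_{\partial W})=\int_{\partial W/PU(k)}\ch_{\partial W}[F|_{\partial W}]\wedge\hat{A}(\partial W/PU(k))=\int_{W/PU(k)}\rd\!\left(\ch_W[F]\wedge\hat{A}(W/PU(k))\right)=0 .
\]
The only input here is that $W$ is a $PU(k)$-bundle, so this argument is insensitive to whether the twist $P$ is torsion; in particular $\Psi_\phi$ is never used, which is the whole point of passing to the naive index.

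Next I would analyse vector bundle modification and show it \emph{fails}. Recall that $(M,[E],\phi)^{[V]}$, for $V=\pi_M^*V_0$ with $V_0\to Y$ an even-dimensional spin$^c$-bundle, is represented by $(M^V,s_![E],\phi\circ\pi_{M^V})$ with $s$ the south-pole section. In the proof of Proposition \ref{projindexprop} one wrote $\Psi_{\phi\circ\pi_{M^V}}s_!E=s_!\Psi_\phi E$ and then applied the twisted Riemann--Roch theorem \cite[Theorem 5.3]{careywang} in the $\phi_0^*P$-twisted framework, where it yields the clean identity $\ch_{(\pi^V)^*\phi_0^*P}[s_!\Psi_\phi E]\wedge\hat{A}(M^V/PU(k))=s_*\!\left(\ch_{\phi_0^*P}[\Psi_\phi E]\wedge\hat{A}(Y)\right)$; the contribution of the spin$^c$-determinant line $\mathcal{L}_{V_0}$ of $V_0$ is exactly what the $\Psi_\phi$-normalization absorbs. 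Repeating the computation with the bare $\ch_M$ in place of $\ch_{\phi_0^*P}[\Psi_\phi(\cdot)]$, one uses only naturality of the twisted Chern character under $\pi_{M^V}$, together with the elementary identities $\hat{A}(Y^{V_0})=\pi^*\!\left(\hat{A}(Y)\wedge\hat{A}(V_0)\right)$ and $\pi_*\!\left(\ch[\beta_{V_0}]\wedge\pi^*\hat{A}(V_0)\right)=\e^{-c_1(\mathcal{L}_{V_0})/2}$ (equivalently $\pi_*\ch[Q_{V_0}]=Td(V_0)^{-1}$), where $\pi\colon Y^{V_0}\to Y$ is the sphere-bundle projection; the projection formula then gives
\[
\widetilde{\ind}_{proj}\!\left((M,E,\phi)^{[V]}\right)=\int_{Y}\ch_M[E]\wedge\hat{A}(Y)\wedge\e^{-c_1(\mathcal{L}_{V_0})/2}.
\]
Thus the defect relative to $\widetilde{\ind}_{proj}(M,E,\phi)$ is precisely the insertion of $\e^{-c_1(\mathcal{L}_{V_0})/2}$: the naive index is invariant under modification along \emph{spin} bundles, but in general not along spin$^c$-bundles. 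For an explicit counterexample I would take a projective cycle $(M,E,\phi)$ lying over a $4$-torus (so that $\hat{A}(Y)=1$, with $Y=T^4$) and modify along a rank-$4$ spin$^c$-bundle $V_0$ with $\int_{T^4}c_1(\mathcal{L}_{V_0})^2\neq 0$ (e.g.\ $V_0=L_1\oplus L_2$ for complex line bundles $L_i$ chosen so that $c_1(L_1)+c_1(L_2)$ has non-zero square); then $\widetilde{\ind}_{proj}$ changes by $\tfrac{r}{8}\int_{T^4}c_1(\mathcal{L}_{V_0})^2\neq 0$, where $r\neq 0$ is the degree-zero part of $\ch_M[E]$.

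The main obstacle will be this vector-bundle-modification computation: one must verify carefully that, once $\Psi_\phi$ is deleted, the twisted Riemann--Roch theorem genuinely produces the factor $\e^{-c_1(\mathcal{L}_{V_0})/2}$ rather than $1$ — that is, that the discrepancy between $\hat{A}$ and the Todd class of the modifying bundle, which in $\ind_{proj}$ is absorbed by $\Psi_\phi$ (this is exactly why $\ind_{proj}$ can use $\hat{A}(Y)$ rather than a Todd-type class and still satisfy Proposition \ref{projindexprop}), is left uncancelled by the intrinsic Chern character $\ch_M$. Everything else — direct sum, bordism — follows the proof of Proposition \ref{projindexprop} verbatim after deleting $\Psi_\phi$.
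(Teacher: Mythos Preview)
Your treatment of direct sum/disjoint union and of bordism is correct and is exactly what the paper intends: both follow the proof of Proposition \ref{projindexprop} verbatim with $\Psi_\phi$ deleted, the key point for bordism being that $M$ (and hence $W$) is always a principal $PU(k)$-bundle for finite $k$, so $\ch_W$ lands in closed forms regardless of whether the ambient twist $P$ is torsion.

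For the failure under vector bundle modification you take a genuinely different route from the paper. The paper argues indirectly and very cheaply: since $\widetilde{\ind}_{proj}$ ignores $\phi$, it is already an invariant of untwisted Baum--Douglas cycles modulo bordism and direct sum; if it also respected vector bundle modification it would factor through $K_*^{geo}(pt)\cong\Z$. But the cycle $(\C P^2,\C,\id)$ has $\widetilde{\ind}_{proj}=\hat{A}(\C P^2)=-\tfrac{1}{8}$, whereas its class in $K_*^{geo}(pt)$ (computed by the spin$^c$-index, i.e.\ the Todd genus) is $1$, the class of $(pt,\C)$, whose naive index is $1$. No Riemann--Roch bookkeeping is needed, and the counterexample is a single line.

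Your approach instead computes the defect explicitly, obtaining (modulo the careful verification you yourself flag) that modification along $V_0$ multiplies the naive index by $\e^{-c_1(\mathcal{L}_{V_0})/2}$, and then exhibits a $T^4$ example. This is more laborious but buys more: it identifies precisely \emph{which} modifications break the naive index (those along spin$^c$-bundles with nontrivial determinant line) and shows invariance persists for spin modifications. The paper's argument gives none of this refinement, but it sidesteps entirely the ``main obstacle'' you identify --- the delicate comparison of $\hat{A}$- versus $Td$-normalizations in twisted Riemann--Roch --- by never performing a single vector bundle modification explicitly.
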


This can be seen already when there is no twist present since the naive projective index does not depend on the mapping part of the cycle.  A concrete example can be constructed by taking the space $\C P^2$ with no twist at all. The cycle $(\C P^2,\C P^2\times \C,\id)$ is a projective cycle over $\C P^2$ whose naive projective index is 
\[\widetilde{\ind}_{proj}(\C P^2,\C P^2\times \C,\id)=\hat{A}(\C P^2)=-\frac{1}{8}.\]
However, it is clear that $\widetilde{\ind}_{proj}(\C P^2,\C P^2\times \C,\id)$ does not depend on the mapping $\id:\C P^2\to \C P^2$ and so it can be computed from the cycle $(\C P^2,\C P^2\times \C)$ over a point. If the naive projective index were to respect vector bundle modification, it would only depend on the class of $(\C P^2,\C P^2\times \C)$ in $K_*^{geo}(pt)$. But $K_*^{geo}(pt)\cong \Z$ and the isomorphism is given by taking the index of the spin$^c$ Dirac operator. Hence $(\C P^2,\C P^2\times \C)$ is equivalent to $(pt,pt\times \C)$ in $K_*^{geo}(pt)$. However;
\[\widetilde{\ind}_{proj}(pt,pt\times \C)=1.\]
We can conclude that the naive projective index is not preserved under vector bundle modification.

\subsection{The associated rational classes}

We will in this subsection make the standard assumption that $P\to X$ is a principal $PU(n)$-bundle. It follows from the rational Chern character being an isomorphism that $K_*^{proj}(P;\Q)\cong K_*(X;\Q)$. We construct this isomorphism explicitly. To begin, the reader should recall that the group $K_*(X;\Q)$ can be realized using cycles of the following form: 

\begin{deef}
Let $X$ be a compact Hausdorff space and denote by $D$ the UHF-algebra with $K_0$-group the rational numbers (compare to \cite[Exercise 23.15.6]{blackadar}).  A rational $K$-cycle over $X$ is a triple, $(M,\xi,f)$, where $M$ is a smooth compact spin$^c$-manifold, $\xi \in K^0(X;\Q)\cong K^0(C(X)\otimes D)$, and $f:M\rightarrow X$ is a continuous map.
\end{deef}

Such cycles are defined using $K$-theory classes rather than bundles.  As such, there are only two relations, bordism and vector bundle modification. We will let the set of equivalence classes of rational $K$-cycles over $X$ under the relation generated by bordism and vector bundle modification be denoted by $K_*^{geo}(X;\Q)$; it forms an abelian group under disjoint union. It does in particular hold that $K_*^{geo}(X;\Q)$ is a module for the $\Q$-algebra $K^*(X;\Q)$, which is a unital algebra if $X$ is compact. The interested reader can find more details on these cycles and associated geometric realization of $KK^*(C(X),D)\cong K_*^{geo}(X;\Q)$ in \cite{wal}. We will use $E^n$ to denote the $n^\mathrm{th}$ iterated tensor product. For a $K$-theory class $x\in K^0(X)$ we let $x_\Q$ denote the image of $x$ under the ring homomorphism $K^0(X)\to K^0(X;\Q)$, induced by the unital inclusion $\C\to D$.

\begin{prop}
If $E$ is a vector bundle on a $d$-dimensional closed manifold $X$ whose rank has an $n^\mathrm{th}$ root, then the rational $K$-theory class $[E]_\Q\in K^0(X;\Q)$ admits an $n^\mathrm{th}$ root;
\[\sqrt[n]{[E]_\Q}:=\sqrt[n]{rk(E)}+\sum_{k=1}^d \begin{pmatrix} 1/n\\k\end{pmatrix} \left([E]_\Q-rk(E)[1]\right)^k\in K^0(X;\Q).\]
The $n^\mathrm{th}$ root satisfies $[E]_\Q=\sqrt[n]{[E]_\Q}^n$. Any vector bundle $F$ satisfies $[F]_\Q=\sqrt[n]{[F]^n_\Q}$ in $K^0(X;\Q)$. Furthermore, roots commutes with Chern characters.
\end{prop}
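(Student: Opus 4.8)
The plan is to exploit the fact that on a finite-dimensional base the augmentation ideal of $K$-theory is nilpotent, so that the formal binomial series for $(1+t)^{1/n}$ truncates to a polynomial and all the manipulations below become legitimate.

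First I would reduce to the case $X$ connected: $X$ decomposes into its connected components, $rk(E)$ is locally constant, and $K^0(\,\cdot\,;\Q)$, $H^*(\,\cdot\,;\Q)$ and the formulae in the statement all split over the components. For connected $X$ the rank is a ring homomorphism $rk\colon K^0(X;\Q)\to\Q$; let $I_\Q\subseteq K^0(X;\Q)$ be its kernel, the augmentation ideal. Since $X$ is a $d$-dimensional closed manifold, hence a finite $CW$-complex, the skeletal filtration of $K$-theory (equivalently the Atiyah--Hirzebruch spectral sequence; see \cite{atiyahbook}) gives $I_\Q^{\,d+1}=0$. Hence $y:=[E]_\Q-rk(E)[1]$ lies in $I_\Q$ with $y^{d+1}=0$, every power series in $y$ collapses to a polynomial of degree $\le d$, and --- using that $rk(E)$ has an $n$-th root in $\Q$ by hypothesis --- the expression in the statement is a well-defined element $z\in K^0(X;\Q)$. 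This nilpotency is the essential input; everything after it is bookkeeping.

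To verify $z^n=[E]_\Q$ I would transport a formal identity. With $\beta(t):=\sum_{k\ge0}\binom{1/n}{k}t^k\in\Q[[t]]$ one has $\beta(t)^n=1+t$ in $\Q[[t]]$; since $y$ is nilpotent, substituting $t$ by a suitable scalar multiple of $y$ defines a ring homomorphism out of $\Q[[t]]$ (factoring through $\Q[t]/(t^{d+1})$), and feeding this substitution into $\beta(t)^n=1+t$ yields, after collecting the scalar powers of $rk(E)$, exactly $z^n=rk(E)[1]+y=[E]_\Q$; this is the direct formal computation, and I would leave the rearrangement of $rk(E)$-powers as routine. For the next assertion I would work in the multiplicative group $1+I_\Q$: as $I_\Q$ is a nilpotent ideal in a $\Q$-algebra, truncated $\log$ and $\exp$ are mutually inverse bijections $1+I_\Q\leftrightarrow I_\Q$, and since $I_\Q$ is a $\Q$-vector space it is uniquely divisible, hence so is $1+I_\Q$; thus $n$-th roots in $1+I_\Q$ are unique. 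Applying the first part to $E=F^n$, both $[F]_\Q$ and $\sqrt[n]{[F^n]_\Q}$ have rank part $rk(F)$ and $n$-th power $[F]_\Q^{\,n}$; writing each as $rk(F)$ times an element of $1+I_\Q$, uniqueness forces $[F]_\Q=\sqrt[n]{[F^n]_\Q}$.

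Finally, compatibility with the Chern character. The rational Chern character $\ch\colon K^0(X;\Q)\xrightarrow{\ \sim\ }\bigoplus_j H^{2j}(X;\Q)$ is a ring isomorphism sending $[1]$ to $1$, $rk$ to the projection onto $H^0$, and $I_\Q$ onto the nilpotent ideal $\bigoplus_{j\ge1}H^{2j}(X;\Q)$, so in particular $\ch(y)$ is the nilpotent part of $\ch[E]_\Q$. Both $\sqrt[n]{[E]_\Q}$ and $\sqrt[n]{\ch[E]_\Q}$ (the latter formed by the same formula in $H^{\mathrm{even}}(X;\Q)$) are the result of evaluating one and the same polynomial $P\in\Q[t]$ --- depending only on $n$, $rk(E)$ and $d$ --- at $y$ and at $\ch(y)$ respectively; since ring homomorphisms commute with evaluation of polynomials, $\ch$ carries the first to the second. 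I expect the only real obstacle to be the one already flagged, namely pinning down the nilpotency $I_\Q^{\,d+1}=0$ so that the formal series and the polynomial $P$ genuinely make sense; the rest is routine algebra.
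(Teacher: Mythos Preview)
Your approach is exactly what the paper intends: the paper does not give a proof at all but simply records the Taylor expansion $(1+x)^\alpha=\sum_{k\ge 0}\binom{\alpha}{k}x^k$ and leaves the details to the reader. Your argument---nilpotency of the augmentation ideal on a $d$-dimensional complex so the series truncates, the formal identity $\beta(t)^n=1+t$ transported along the substitution, uniqueness of $n$-th roots in $1+I_\Q$ via $\exp/\log$, and compatibility with $\ch$ because it is a ring homomorphism respecting the augmentation---is precisely the fleshing-out that hint calls for, and is correct.
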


We leave the details to the reader who should note the Taylor expansion:
\[(1+x)^\alpha=\sum_{k=0}^\infty \begin{pmatrix} \alpha\\k\end{pmatrix} x^k, \quad\mbox{for}\quad|x|<1.\]

If $E$ is a projective vector bundle over a principal $PU(n)$-bundle, we define its rationalization $[E]_\Q$ as the rational $K$-theory class
\[[E]_\Q:=\sqrt[n]{E^n}.\]
This definition produces an element of $K^*(X;\Q)$ using the fact that $E^n$ descends to a vector bundle on $X$. The rational class $[E]_\Q\in K^0(X;\Q)$ of a projective bundle is well defined by Proposition \ref{elementpropproj}.

\begin{prop}
\label{rationalktheory}
Let $P\to X$ denote a principal $PU(n)$-bundle on a closed manifold. Rationalization of projective bundles induces a well defined mapping $K^*_{proj}(P)\to K^*(X;\Q)$ which is a rational isomorphism.
\end{prop}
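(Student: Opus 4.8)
The plan is to check that the rationalization map $[E]\mapsto [E]_\Q:=\sqrt[n]{E^n}$ is well defined on $K^*_{proj}(P)$ and then to use the twisted Chern character to identify it, after tensoring with $\Q$, with an isomorphism. First I would verify well-definedness: by Proposition \ref{elementpropproj} the bundle $E^n$ has central character $0\bmod n\Z$ and hence descends to a genuine vector bundle on $X$ (namely $\C^n\times_{SU(n)}E^n$, using the last identity of \eqref{azzzuu}), so $\sqrt[n]{E^n}\in K^0(X;\Q)$ makes sense by the previous Proposition on $n$th roots. To promote this from projective bundles to the group $K^0_{proj}(P)$, one notes that the $n$th root is additive on the level of Chern characters: $\ch_P$ of a projective bundle is a rational cohomology class (the twist is torsion), and $\ch\sqrt[n]{E^n}=\sqrt[n]{\ch(E^n)}=\sqrt[n]{(\ch_P E)^n\cdot(\text{torsion correction})}$; since the Chern character on $X$ is a rational isomorphism, the map $[E]\mapsto[E]_\Q$ extends to a homomorphism on the Grothendieck group, and degenerate and homotopic complexes are sent to $0$ and to equal classes respectively. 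Bott periodicity handles the higher groups. Throughout one uses Theorem \ref{projectivevstwisted} to pass freely between projective bundles on $P$ and $\mathcal{A}(P)$-modules on $X$.

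Next I would show the map is a rational isomorphism. The cleanest route is to compare Chern characters: on $K^*_{proj}(P)\otimes\Q$ we have the twisted Chern character $\ch_P:K^*_{proj}(P)\otimes\Q\xrightarrow{\sim}H^*_{dR}(X)$ (a rational isomorphism because $\delta(P)$ is torsion — cf. \cite{mathstev}, used already in Lemma \ref{blaaa}), while on $K^*(X;\Q)$ the ordinary Chern character gives $K^*(X;\Q)\xrightarrow{\sim}H^*_{dR}(X)$. The key computation is that these two isomorphisms intertwine rationalization: $\ch\big([E]_\Q\big)=\ch_P[E]$ in $H^*_{dR}(X)$, which follows because $\ch$ commutes with $n$th roots (last sentence of the previous Proposition) and $\ch(E^n)=\big(\ch_P[E]\big)^n$ on the nose as de Rham classes — the twisting cocycle contributes a coboundary that vanishes rationally. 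Granting this identity, rationalization is the composite of two rational isomorphisms with the identity on $H^*_{dR}(X)$ sandwiched between, hence is itself a rational isomorphism. One should also remark that the map is natural in $P$ (with respect to stable morphisms over a fixed base, or pullbacks), which is immediate from the corresponding naturality of $\ch_P$ and $\ch$.

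The main obstacle I expect is the compatibility $\ch([E]_\Q)=\ch_P[E]$, i.e. pinning down precisely how the twisted Chern character $\ch_P$ (defined via cocycle data / the Azumaya bundle) relates to the untwisted Chern character of the descended bundle $\C^n\times_{SU(n)}E^n$. One must be careful that taking the $n$th tensor power multiplies the Dixmier-Douady class by $n$, hence trivializes it, so $E^n$ genuinely descends; and that the normalization of $\ch_P$ is chosen so that $\ch_P(E^{\otimes n})=(\ch_P E)^{\otimes n}$ maps to $\ch$ of the descent without an extra twist factor. This is essentially a bookkeeping matter — it is the twisted analogue of Proposition \ref{commdiagramrational} and can be reduced, by naturality and the local description of projective bundles in Proposition \ref{vbdatatoproj}, to the elementary case of the tautological projective line bundle $L_n\to PU(n)$, where $L_n^{\otimes n}$ is trivial and the claim is a direct check. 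Everything else (well-definedness modulo the three relations, additivity, Bott periodicity) is routine given the tools already assembled in Section 1.
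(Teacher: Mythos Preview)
Your proposal is correct and follows essentially the same route as the paper: the paper's entire proof is the one-line observation that $\ch[E]_\Q=\ch_P[E]$ together with the fact that the Chern character is a rational isomorphism, from which both additivity (hence well-definedness on $K^*_{proj}(P)$) and the rational isomorphism statement follow at once. You have simply unpacked this argument in more detail, and correctly identified the compatibility $\ch([E]_\Q)=\ch_P[E]$ as the crux --- the paper asserts it without further comment, so your sketch of how to verify it (via multiplicativity of $\ch_P$ and reduction to $L_n\to PU(n)$) is a reasonable supplement rather than a departure.
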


\begin{proof}
Since $\ch[E]_\Q=\ch_P[E]$ and the Chern character is a rational isomorphism, $[E]\mapsto [E]_\Q$ is both additive and a rational isomorphism.
\end{proof}

If $M$ is a spin$^c$-manifold we let $L^{W_3}(M)\to M$ denote the determinant line bundle associated with the spin$^c$-structure. To be precise, if $Fr^c(M)\to M$ denotes the principle spin$^c$-bundle that lifts the frame bundle $Fr(M)\to M$, the determinant bundle is $Fr^c(M)\times_{Spin^c}U(1)$.

\begin{prop}
In the notation of the previous paragraph, we have 
\[\ch[\sqrt{L^{W_3}(M)}]_\Q \wedge Td(M)=\hat{A}(M).\]
\end{prop}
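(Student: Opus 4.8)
The plan is to read the identity as an equality of classes in $H^*_{dR}(M)$ and to compute the two factors on the left hand side separately: first the Chern character of the rational square root of the determinant line bundle, and then the comparison of $Td(M)$ with $\hat A(M)$ via that line bundle. Substituting the two computations then gives the claim.

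First I would compute $\ch[\sqrt{L^{W_3}(M)}]_\Q$. Since $L^{W_3}(M)$ is a line bundle its rank equals $1$, which admits an $n$-th root for every $n$, so the preceding proposition on $n$-th roots of rational $K$-theory classes applies with $n=2$ and yields the well defined class $[\sqrt{L^{W_3}(M)}]_\Q=\sqrt{[L^{W_3}(M)]_\Q}\in K^0(M;\Q)$. Using that roots commute with the Chern character one gets
\[\ch[\sqrt{L^{W_3}(M)}]_\Q=\sqrt{\,\ch[L^{W_3}(M)]_\Q\,}=\sqrt{\e^{c_1(L^{W_3}(M))}}=\e^{\frac{1}{2}c_1(L^{W_3}(M))},\]
the square root being the unique one with constant term $1$. (Equivalently, one may realize $\sqrt{L^{W_3}(M)}$ as the projective line bundle $L_\omega$ of Proposition \ref{rootprop} for the class $\omega=\tfrac12 c_1(L^{W_3}(M))\in H^2(M;\Q)$, and then Proposition \ref{commdiagramrational} gives $\ch_{P_\omega}[L_\omega]=\e^\omega$ directly.) It thus remains to prove the purely characteristic-class identity $\e^{\frac12 c_1(L^{W_3}(M))}\wedge Td(M)=\hat A(M)$.

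For this I would invoke the classical relation between the Todd genus and the $\hat A$-genus of a spin$^c$ manifold mediated by its determinant line bundle, $Td(M)=\hat A(M)\wedge\e^{-\frac12 c_1(L^{W_3}(M))}$, which is the de Rham incarnation of the fact that the spin$^c$ Dirac operator computes a Riemann--Roch type index. By the splitting principle this reduces to the elementary power-series identity for a complex line bundle $L'$,
\[Td(L')=\frac{c_1(L')}{1-\e^{-c_1(L')}}=\e^{\frac12 c_1(L')}\cdot\frac{c_1(L')/2}{\sinh(c_1(L')/2)}=\e^{\frac12 c_1(L')}\wedge\hat A(L'_{\R}),\]
so that $Td(V)=\e^{\frac12 c_1(V)}\wedge\hat A(V_{\R})$ for every complex bundle $V$; applying this to a complex structure on $T^*M$ compatible with the spin$^c$-structure — recalling that $Fr(M)$, hence $L^{W_3}(M)$, is built from $T^*M$, so that in the almost complex case $L^{W_3}(M)$ is the canonical bundle $\det T^{*1,0}M$ and $c_1(L^{W_3}(M))=-c_1$ of that complex structure — gives the displayed relation (the general spin$^c$ case following since both sides are universal characteristic classes of the pair consisting of the Pontryagin classes of $M$ and $c_1(L^{W_3}(M))$). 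Combining with the first step finishes the proof.

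There is no genuine obstacle here: the only point that requires care is the bookkeeping of signs and normalizations, namely whether $L^{W_3}(M)$ is the determinant line bundle for $[g,z]\mapsto z^2$ or its inverse (equivalently, whether it restricts to the canonical or anticanonical bundle in the almost complex case) and what $Td(M)$ is declared to mean for a merely spin$^c$ manifold. Once these conventions are fixed as above — consistent with $Fr(M)$ being the oriented frame bundle of $T^*M$ — the identity is immediate from the two power-series computations, and everything else is routine.
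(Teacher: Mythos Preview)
Your proof is correct and follows the same approach as the paper: the paper's proof is the single line ``The proof of the Proposition follows by observing $\e^{c_1(L^{W_3}(M))/2}\wedge Td(M)=\hat{A}(M)$'', and you have simply unpacked both halves of that observation --- that $\ch$ of the rational square root is $\e^{c_1/2}$ (using that roots commute with Chern characters), and that the classical Todd/$\hat A$ relation holds with the sign dictated by $Fr(M)$ being built from $T^*M$. Your caveat about the sign bookkeeping is apt and is the only thing to watch here; the rest is exactly the intended argument, just written out.
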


\begin{proof}
The proof of the Proposition follows by observing $\e^{c_1(L^{W_3}(M))/2}\wedge Td(M)=\hat{A}(M)$.
\end{proof}

Recall the notation $\Psi_\phi$ for the stable isomorphism $\phi_0^*P\cong M$ associated with a stable morphism $\phi$ from Definition \ref{projectiveindex}.

\begin{deef}
\label{geofromproj}
If $(M,E,\phi)$ is a projective cycle we define its rationalization as the geometric rational $K$-cycle over $X$ given by
\[(M,E,\phi)_\Q=\left(PU(k)\backslash M,[\Psi_\phi E]_\Q\otimes \sqrt{L^{W_3}(PU(k)\backslash M)},\phi_0\right).\]
\end{deef}

\begin{prop}
Rationalization of projective cycles induces a well defined mapping $R:K_*^{proj}(P)\to K_*(X;\Q)$ which is a rational isomorphism.
\end{prop}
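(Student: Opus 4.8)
The plan is to verify that $R$ respects the three relations generating $K_*^{proj}(P)$ — disjoint union/direct sum, bordism, and vector bundle modification — so that it descends to a well-defined homomorphism, and then deduce the rational isomorphism from the fact that the twisted Chern character is a rational isomorphism. First I would check additivity: by Proposition \ref{rationalktheory} rationalization of projective bundles is additive, so $(M,E\oplus E',\phi)_\Q$ corresponds to $(PU(k)\backslash M,([\Psi_\phi E]_\Q+[\Psi_\phi E']_\Q)\otimes \sqrt{L^{W_3}})$, which is the sum of the two rationalized cycles in $K_*^{geo}(X;\Q)$; disjoint unions go to disjoint unions by construction. Bordism invariance is almost immediate: a projective bordism $(W,F,f)$ produces the rational $K$-cycle with boundary $(PU(k)\backslash W,[\Psi_f F]_\Q\otimes \sqrt{L^{W_3}(PU(k)\backslash W)},f_0)$, and the restriction of all three data to the boundary is compatible since $\Psi_f|_{\partial W}=\Psi_{f|_{\partial W}}$, $[\,\cdot\,]_\Q$ commutes with restriction, and the determinant line bundle of the boundary is the restriction of that of $W$.

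The main work is vector bundle modification. Given $(M,E,\phi)$ and $V_0\to PU(k)\backslash M$ even-dimensional spin$^c$, I must show that $((M,E,\phi)^V)_\Q$ is equivalent, under bordism and vector bundle modification of rational $K$-cycles, to $(M,E,\phi)_\Q^{V_0}$ (modification of the rational cycle along $V_0$, using the Bott class $[\beta_{V_0}]_\Q$). Unwinding Definition \ref{vebemod}, the left side is $(PU(k)\backslash M^V, [\Psi_{\phi\circ\pi_{M^V}}E^V]_\Q\otimes\sqrt{L^{W_3}((PU(k)\backslash M)^{V_0})},\phi_0\circ\pi)$; here I use that $PU(k)\backslash M^V=(PU(k)\backslash M)^{V_0}$ and that the spin$^c$-structure on the sphere bundle is the one induced from $V_0$, so by the two-out-of-three lemma $L^{W_3}$ of the sphere bundle and the pullback of $L^{W_3}$ of the base differ by the (pullback of the) determinant of the fiberwise spin$^c$ structure, which combines with $\sqrt{}$ to reproduce exactly the factor appearing in vector bundle modification of rational cycles. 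The key algebraic identity is that rationalization intertwines the twisted Bott class with the ordinary (rational) Bott class: $[\pi_{M^V}^*E\otimes\pi_{Y^{V_0}}^*Q_{V_0}]_\Q = \pi^*[\Psi_\phi E]_\Q\otimes[Q_{V_0}]_\Q$, which follows from multiplicativity of $[\,\cdot\,]_\Q$ (Proposition \ref{rationalktheory} / the root proposition) and the fact that $Q_{V_0}$ is an honest vector bundle so $[Q_{V_0}]_\Q=\sqrt[n]{[Q_{V_0}]^n_\Q}$. After matching these factors, the two rational cycles literally agree, or differ by the standard bordism relating $(M^V,[E^V])$ to the cycle modified along the $K$-theory Bott class (the analogue of Proposition stated just after Definition \ref{vebemod}, which holds verbatim for rational cycles).

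Once $R$ is well defined and additive, the isomorphism statement follows formally: the diagram
\[
\begin{CD}
K_*^{proj}(P)@>R>>K_*(X;\Q)\\
@V\ch_P VV @VV\ch V\\
H^{*}_{dR}(X)@>=>>H^{*}_{dR}(X)
\end{CD}
\]
commutes because $\ch[\Psi_\phi E]_\Q=\ch_{\phi_0^*P}[\Psi_\phi E]$ by Proposition \ref{rationalktheory}, and the determinant-bundle factor $\sqrt{L^{W_3}}$ converts $Td$ into $\hat A$ on both sides (the Proposition preceding Definition \ref{geofromproj}), so after tensoring with $\Q$ both vertical maps become isomorphisms onto the appropriate even/odd de Rham cohomology and the bottom map is the identity; hence $R\otimes\Q$ is an isomorphism. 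The step I expect to be the main obstacle is the bookkeeping in vector bundle modification — specifically, checking that the determinant line bundle $L^{W_3}$ of the sphere bundle and the twisted Bott class assemble correctly under rationalization so that $((M,E,\phi)^V)_\Q$ is genuinely a vector bundle modification of $(M,E,\phi)_\Q$ and not merely bordant to one; everything else is routine given the earlier results.
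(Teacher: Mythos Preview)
Your verification of well-definedness is essentially what the paper has in mind: it explicitly leaves this as ``a standard verification'' for the reader, and your outline of how disjoint union, bordism and vector bundle modification behave under rationalization is the intended check.

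The rational isomorphism argument, however, takes a different route from the paper and has a gap. You propose a commuting square with vertical Chern characters and then assert that ``after tensoring with $\Q$ both vertical maps become isomorphisms''. The right vertical map is fine, but the left one is a \emph{homological} twisted Chern character $K_*^{proj}(P)\to H^*_{dR}(X)$, which is nowhere defined in the paper, and whose being a rational isomorphism is precisely what needs to be shown. Proposition~\ref{rationalktheory} is a statement about projective $K$-\emph{theory}, not $K$-homology; it tells you that $\ch[\Psi_\phi E]_\Q=\ch_{\phi_0^*P}[\Psi_\phi E]$ on each cycle, hence that the square commutes, but it does not by itself tell you the left vertical map is a rational isomorphism. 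To establish that, you would need to pass through analytic twisted $K$-homology and invoke the assembly isomorphism of Theorem~\ref{analyticassembly} together with the analytic twisted Chern character --- at which point you have essentially reproduced the paper's argument with extra steps.

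The paper's proof simply says: the assembly map $\mu:K_*^{proj}(P)\to K_*^{an}(P)$ is an isomorphism (Theorem~\ref{analyticassembly}), and combined with Proposition~\ref{rationalktheory} this forces $R$ to be a rational isomorphism once it is well defined. So the missing ingredient in your approach is exactly the assembly isomorphism; your Chern-character square is a natural way to organize the compatibility, but it cannot replace Theorem~\ref{analyticassembly} as the source of the isomorphism.
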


\begin{proof}
It is clear from Proposition \ref{rationalktheory} and the fact that the assembly mapping is an isomorphism, that if the mapping is well defined, it is a rational isomorphism. That the rationalization is well defined is a standard verification that we leave to the reader.
\end{proof}

\begin{thm}
\label{rationalization}
If $P\to X$ is a principle $PU(n)$-bundle, the following diagram commutes:
\[\xymatrix@C=2.4em@R=3.71em{
K_*^{proj}(P)\ar[rr]^{R}\ar[dr]_{\ind_{proj}}  && K_*(X;\Q)\ar[dl]^{\ind_\Q}\\
& \Q&}\]
\end{thm}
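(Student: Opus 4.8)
The strategy is to verify the identity $\ind_\Q\circ R=\ind_{proj}$ on cycle representatives. Both $R$ (by the Proposition preceding Theorem \ref{rationalization}) and $\ind_{proj}$ (by Proposition \ref{projindexprop}) are already known to descend to $K_*^{proj}(P)$, and $\ind_\Q$ is the push-forward to a point $K_*(X;\Q)=K_*^{geo}(X;\Q)\to K_*(pt;\Q)=K_0(D)=\Q$ (recall $K_1(D)=0$); so it suffices to compare the two composites on an arbitrary projective cycle $(M,E,\phi)$. For a rational $K$-cycle $(N,\xi,f)$ this push-forward is the index in $K_0(D)=\Q$ of the spin$^c$-Dirac operator on $N$ twisted by $\xi$, which by the index theorem with coefficients in the UHF-algebra $D$ (equivalently, via the geometric model of $KK_*(C(X),D)$ used in \cite{wal}) is computed by
\[\ind_\Q(N,\xi,f)=\int_N\ch(\xi)\wedge Td(N).\]

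Now I would apply this to $R(M,E,\phi)$. Writing $N:=PU(k)\backslash M$, Definition \ref{geofromproj} gives
\[R(M,E,\phi)=\left(N,\ [\Psi_\phi E]_\Q\otimes\sqrt{L^{W_3}(N)},\ \phi_0\right),\]
so, using multiplicativity of the Chern character,
\begin{align*}
\ind_\Q(R(M,E,\phi))&=\int_N\ch\!\left([\Psi_\phi E]_\Q\otimes\sqrt{L^{W_3}(N)}\right)\wedge Td(N)\\
&=\int_N\ch[\Psi_\phi E]_\Q\wedge\Big(\ch\big(\sqrt{L^{W_3}(N)}\big)\wedge Td(N)\Big).
\end{align*}
The Proposition immediately preceding Definition \ref{geofromproj} identifies the parenthesized factor as $\ch(\sqrt{L^{W_3}(N)})\wedge Td(N)=\hat{A}(N)$. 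Moreover $\ch[\Psi_\phi E]_\Q=\ch_{\phi_0^*P}[\Psi_\phi E]$, since rationalization of (projective) bundles commutes with the twisted Chern character — this is precisely the identity $\ch[E]_\Q=\ch_P[E]$ used in the proof of Proposition \ref{rationalktheory}, applied to the projective bundle $\Psi_\phi E$ over $\phi_0^*P$. Substituting both inputs,
\[\ind_\Q(R(M,E,\phi))=\int_{PU(k)\backslash M}\ch_{\phi_0^*P}[\Psi_\phi E]\wedge\hat{A}(PU(k)\backslash M)=\ind_{proj}(M,E,\phi),\]
the last equality being Definition \ref{projectiveindex}. When $\dim PU(k)\backslash M$ is odd both sides vanish for degree reasons, so the computation covers both parities, and the triangle commutes.

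The one step requiring care — and the only genuinely non-formal point — is the identification of $\ind_\Q$ with $\int_N\ch(\xi)\wedge Td(N)$: one must know that the push-forward to a point in the geometric realization $K_*^{geo}(X;\Q)$ of \cite{wal} is computed by the rational ($D$-coefficient) Atiyah--Singer formula for the twisted spin$^c$-Dirac operator, with the same $Td$-normalization fixed throughout Section \ref{fractionalsection} (this is exactly the normalization matched to $\hat{A}$ by the Proposition preceding Definition \ref{geofromproj}). Granting that, the remainder is just multiplicativity of $\ch$ together with the two quoted propositions.
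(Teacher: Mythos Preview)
Your proof is correct and follows essentially the same route as the paper: compute $\ind_\Q$ of the rationalized cycle via the integral $\int_N\ch(\xi)\wedge Td(N)$, split off $\ch(\sqrt{L^{W_3}(N)})\wedge Td(N)=\hat{A}(N)$ using the preceding Proposition, and identify $\ch[\Psi_\phi E]_\Q$ with $\ch_{\phi_0^*P}[\Psi_\phi E]$ via Proposition~\ref{rationalktheory}. If anything, you are more explicit than the paper in flagging that the formula $\ind_\Q(N,\xi,f)=\int_N\ch(\xi)\wedge Td(N)$ requires the $D$-coefficient index theorem as input, and in noting the odd-dimensional parity check.
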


\begin{proof}
We have that 
\begin{align*}
\ind_\Q&\left([L]\cap (M,E,\phi)_\Q\right)=\\
&=\int_{PU(k)\backslash M} \ch\left([L^{W_3}(PU(k)\backslash M)]_\Q\cup [\Psi_\phi E]_\Q\right) \wedge Td(PU(k)\backslash M)=\\
&=\int_{PU(k)\backslash M}\ch_{\phi_0^*P}\left(\Psi_\phi [E]\right)\wedge \hat{A}(PU(k)\backslash M)=\ind_{proj}(M,E,\phi).
\end{align*}
\end{proof}

\paragraph{\textbf{Acknowledgements}}
The authors wish to express their gratitude towards P. Baum, A. Carey and B.-L. Wang for discussions and access to \cite{BCW}. The authors also wish to thank V. Mathai for inspiring discussions in the early stages of this project. The first listed author was supported by an NSERC postdoctoral fellowship. The second author gratefully acknowledges support by the Institut Mittag-Leffler (Djursholm, Sweden). Both authors thank the Courant Centre of G\"ottingen, the Leibniz Universit\"at Hannover and the Graduiertenkolleg 1463 (\emph{Analysis, Geometry and String Theory}) for facilitating this collaboration.


\begin{thebibliography}{100}

\bibitem{atiyahbook} M. F. Atiyah, \emph{$K$-theory}, lecture notes by D. W. Anderson, W. A. Benjamin, Inc., New York-Amsterdam 1967 v+166+xlix pp.

\bibitem{atiyahglobal} M. F. Atiyah, \emph{Global theory of elliptic operators}, 1970 Proc. Internat. Conf. on Functional Analysis and Related Topics (Tokyo, 1969) p. 21--30 Univ. of Tokyo Press, Tokyo.

\bibitem{atiyahsingerone} M. F. Atiyah, I. M. Singer, \emph{The index of elliptic operators I}, Ann. of Math. (2) 87 (1968), p. 484--530.

\bibitem{BCW} P. Baum, A. Carey, B.-L. Wang, \emph{$K$-cycles for twisted $K$-homology}, J. $K$-Theory 12 (2013), no. 1, p. 69--98.

\bibitem{baumdtva} P. Baum, R.G. Douglas, \emph{$K$-homology and index theory}, Operator algebras and applications, Part I (Kingston, Ont., 1980),  p. 117--173, Proc. Sympos. Pure Math., 38, Amer. Math. Soc., Providence, R.I., 1982.

\bibitem{baumequivalence} P. Baum, N. Higson, T. Schick, \emph{On the equivalence of geometric and analytic $K$-homology}, Pure Appl. Math. Q. 3 (2007), no. 1, part 3, p. 1--24. 

\bibitem{boosw} P. Baum, H. Oyono-Oyono, T. Schick, M. Walter, \emph{Equivariant geometric $K$-homology for compact Lie group actions}, Abh. Math. Semin. Univ. Hambg. 80 (2010), no. 2, p. 149--173. 

\bibitem{baumvanerp} P. F. Baum, E. van Erp, \emph{$K$-homology and index theory on contact manifolds}, Acta Mathematica (to appear), arXiv:1107.1741. 

\bibitem{blackadar} B. Blackadar: $K$-theory for operator algebras, Second edition. Mathematical Sciences Research Institute Publications, 5. Cambridge University Press, Cambridge, 1998. xx+300 pp. ISBN: 0-521-63532-2.

\bibitem{cumero} J. Cuntz, R. Meyer, J.M. Rosenberg, \emph{Topological and Bivariant $K$-theory}, Birkh\"auser 2007.

\bibitem{deformationgerbes} P. Bressler, A. Gorokhovsky, R. Nest, B. Tsygan, \emph{Deformation quantization of gerbes}, Adv. Math. 214 (2007), no. 1, p. 230--266. 

\bibitem{thomcareywang} A. L. Carey, B.-L. Wang, \emph{Thom isomorphism and push-forward map in twisted $K$-theory}, Journal of $K$-theory: $K$-theory and its Applications to Algebra, Geometry, and Topology 1, p. 357--393.

\bibitem{careywang} A. L. Carey, B.-L Wang, \emph{Riemann-Roch and index formulae in twisted $K$-theory}, Superstrings, geometry, topology, and $C^*$-algebras, p. 95--131, Proc. Sympos. Pure Math., 81, Amer. Math. Soc., Providence, RI, 2010. 

\bibitem{deeleyzkone} R. J. Deeley, \emph{Geometric $K$-homology with coefficients I}, J. $K$-Theory, 9 (2012), no. 3, p. 537--564.

\bibitem{deeleyzktwo} R. J. Deeley, \emph{Geometric $K$-homology with coefficients II}, to appear in J. $K$-Theory, arXiv:1101.0703.

\bibitem{deeleyrmodz} R. J. Deeley, \emph{$\R/\Z$-valued index theory via geometric $K$-homology}, arXiv:1206.5662.

\bibitem{ecemhy} S. Echterhoff, H. Emerson, H. J. Kim, \emph{$KK$-theoretic duality for proper twisted actions}, Math. Ann. 340 (2008), no. 4, p. 839--873. 

\bibitem{gofftwisted} M. Goffeng, \emph{Operator-valued pseudo-differential operators and the twisted index pairing}, arXiv: 1011.5800.

\bibitem{grothen} A. Grothendieck, \emph{Le groupe de Brauer. I. Algebres d'Azumaya et interpretations diverses}, Seminaire Bourbaki, Vol. 9, Exp. No. 290, Soc. Math. France, Paris, 1995, pp. 199--219. 


\bibitem{higsonroeEtaAssemRig} N. Higson, J. Roe, \emph{$K$-homology, assembly and rigidity theorems for relative eta invariants}, Pure Appl. Math. Q., 6 (2010), no. 2, Special Issue: In honor of Michael Atiyah and Isadore Singer, p. 555--601.

\bibitem{lott}J. Lott, \emph{$\R/\Z$ index theory}, Comm. Anal. Geom., 2 (1994), p. 279--311.

\bibitem{kaspindex} G.G. Kasparov, \emph{Operator $K$-theory and its applications: elliptic operators, group representations, higher signatures, $C^*$-extensions}, Proceedings of the International Congress of Mathematicians, Vol. 1, 2 (Warsaw, 1983), 987--1000, PWN, Warsaw, 1984.

\bibitem{kasparov} G. G. Kasparov, \emph{Equivariant $KK$-theory and the Novikov conjecture}, Invent. Math. 91 (1988), no. 1, p. 147--201. 


\bibitem{mmett} M. Marcolli, V. Mathai, \emph{Twisted index theory on good orbifolds. I. Noncommutative Bloch theory},  Commun. Contemp. Math.  1  (1999),  no. 4, p. 553--587.


\bibitem{mmstva} V. Mathai, R. B. Melrose, I. M. Singer, \emph{The index of projective families of elliptic operators}, Geom. Topol. 9 (2005), p. 341--373.

\bibitem{mms} V. Mathai, R.B. Melrose, I.M. Singer, \emph{Fractional analytic index}, J. Differential Geom.  74  (2006),  no. 2, p. 265--292.

\bibitem{mathstev} V. Mathai, D. Stevenson, \emph{On a generalized Connes-Hochschild-Kostant-Rosenberg theorem}, Adv. Math. 200 (2006), no. 2, p. 303--335. 


\bibitem{Rav} J. Raven. {\it An equivariant bivariant chern character}, PhD Thesis, Pennsylvania State University, 2004. (available online at the Pennsylvania State Digital Library).

\bibitem{rosenberg} J. Rosenberg, \emph{Topology, $C^*$-algebras, and string duality}, CBMS Regional Conference Series in Mathematics, 111. Published for the Conference Board of the Mathematical Sciences, Washington, DC; by the American Mathematical Society, Providence, RI, 2009. viii+110 pp. ISBN: 978-0-8218-4922-4


\bibitem{wal}M. Walter, \emph{Equivariant geometric $K$-homology with coefficients}, Diplomarbeit University of Goettingen, 2010.

\bibitem{wangcycles} B.-L. Wang, \emph{Geometric cycles, index theory and twisted $K$-homology}, J. Noncommut. Geom. 2 (2008), no. 4, p. 497--552. 

\end{thebibliography}
\end{document}